\theoremstyle{plain}
\newtheorem{thm}{Theorem}[section]
\newtheorem{prop}[thm]{Proposition}
\newtheorem{lem}[thm]{Lemma}
\newtheorem{cor}[thm]{Corollary}
\numberwithin{equation}{section}
\theoremstyle{definition}
\theoremstyle{remark}
\newtheorem{remark}[thm]{Remark}
\newtheorem*{acknowledgements}{Acknowledgements}
\theoremstyle{plain}
\newtheorem{introthm}{Theorem}
\newcommand{\thmref}[1]{Theorem~\ref{#1}}
\newcommand{\propref}[1]{Proposition~\ref{#1}}
\newcommand{\secref}[1]{Section~\ref{#1}}
\newcommand{\subsecref}[1]{subsection~\ref{#1}}
\newcommand{\lemref}[1]{Lemma~\ref{#1}}
\newcommand{\corref}[1]{Corollary~\ref{#1}}
\newcommand{\figref}[1]{Figure~\ref{#1}}
\newcommand{\remref}[1]{Remark~\ref{#1}}
\newcommand{\eqnref}[1]{Equation~\eqref{#1}}
\newcommand{\calC}{{\mathcal C}}
\newcommand{\M}{{\mathcal M}}
\newcommand{\calP}{{\mathcal P}}
\newcommand{\T}{{\mathcal T}}
\newcommand{\R}{{\mathbb R}}
\newcommand{\Z}{{\mathbb Z}}
\newcommand{\wtilde}{\widetilde}
\newcommand{\st}{{\, \big| \,}}
\newcommand{\param}{{\mathchoice{\mkern1mu\mbox{\raise2.2pt\hbox{$
\centerdot$}}
\mkern1mu}{\mkern1mu\mbox{\raise2.2pt\hbox{$\centerdot$}}\mkern1mu}{
\mkern1.5mu\centerdot\mkern1.5mu}{\mkern1.5mu\centerdot\mkern1.5mu}}}
\DeclareMathOperator{\PSL}{PSL}
\DeclareMathOperator{\sys}{sys}
\DeclareMathOperator{\arcsinh}{arcsinh}
\DeclareMathOperator{\arccosh}{arccosh}
\DeclareMathOperator{\spann}{span}
\DeclareMathOperator{\tree}{\Sigma}
\DeclareMathOperator{\aut}{Aut}
\DeclareMathOperator{\ind}{ind}
\newcommand{\girth}{{w}}
\newcommand{\eps}{{\varepsilon}}
\begin{document}

\title{Local maxima of the systole function}

\author{Maxime Fortier Bourque}
\address{School of Mathematics and Statistics, University of Glasgow, University Place, Glasgow, United Kingdom, G12 8QQ}
\email{maxime.fortier-bourque@glasgow.ac.uk}

\author{Kasra Rafi}
\address{Department of Mathematics, University of Toronto, 40 St. George Street, Toronto, ON, Canada M5S 2E4}
\email{rafi@math.toronto.edu}

\begin{abstract}
We construct infinite families of closed hyperbolic surfaces that are local maxima for the systole function on their respective moduli spaces. The systole takes values along a linearly divergent sequence $(L_n)_{n\geq 1}$ at these local maxima. The only surface corresponding to $L_1\approx 3.057$ is the Bolza surface in genus $2$. For every genus $g\geq 13$, we obtain either one or two local maxima in $\mathcal{M}_g$ whose systoles have length $L_2\approx 5.909$. For each $n\geq 3$, there is an arithmetic sequence of genera $(g_k)_{k\geq 1}$ such that the number of local maxima of the systole function in $\mathcal{M}_{g_k}$ at height $L_n$ grows super-exponentially in $g_k$. In particular, level sets of the systole function can have an arbitrarily large number of connected components. Many of the surfaces we construct have trivial automorphism group, and are the first examples of local maxima with this property.
\end{abstract}

\subjclass[2010]{30F60, 32G15}

\maketitle

\section{Introduction}

The systole of a hyperbolic surface is the length of any of its shortest closed geodesics. For any $g\geq 2$, this defines a continuous function $\sys : \T_g \to \R_+$ on the Teichm\"uller space of closed hyperbolic surfaces of genus $g$ which is invariant under the action of the mapping class group, hence descends to a continuous function on the moduli space $\M_g$. 

By Mumford's compactness criterion \cite{Mumford}, the thick part $\{ x \in \M_g \st \sys(x) \geq \eps \}$ of moduli space is compact for any $\eps > 0$. Therefore, the systole function attains a global maximum on each moduli space. The precise value of the maximum is unknown in general; the best bounds known to date are
\begin{equation} \label{eq:globalmax}
\frac{4}{3} \leq \limsup_{g \to \infty} \frac{\max \{\sys(x) \st x \in \M_g  \}}{\log g} \leq 2.
\end{equation}
The upper bound is a standard area argument, while the lower bound is a result due to Buser and Sarnak \cite{BuserSarnak}.

In genus $2$, the maximum of the systole function is attained at the Bolza surface \cite{Jenni}, which is also the surface of genus $2$ with the largest automorphism group. In fact, the Bolza surface is the only local maximum of $\sys$ in $\M_2$ \cite{SchmutzMaxima}. Surprisingly, Klein's quartic---the surface of genus $3$ with largest automorphism group---is not the global maximizer of $\sys$ in $\M_3$ although it is a local maximum. Schmutz Schaller \cite{SchmutzMaxima} found two other local maxima in $\M_3$---one with larger systole than Klein's quartic---and conjectured that these are the only ones.

In the same paper, Schmutz Schaller constructed examples\footnote{Technically, the proof that these are local maxima contains a gap. It ultimately relies on \cite{SchmutzGerman} which proves that a certain set $F$ of curves is such the map $x \to (\ell_\alpha(x))_{\alpha \in F}$ that records the length of their geodesic representatives is injective on Teichm\"uller space. What is really needed is that the differential of that map is injective at the given surface, which is not shown. Injective smooth maps are not always immersions. Perhaps this cannot happen in the context at hand, but we could not see why.} of local maxima in genus $4$, $5$, $11$, $23$ and $59$, and one additional example in every odd genus. In this infinite family, the systole is bounded above by $5.634$. More sporadic examples in genus $4$, $5$ and $13$ are presented in \cite{SchmutzMoreExamples}. Three further examples of locally maximal triangle surfaces of genus $6$, $10$ and $10$ were discovered in \cite{Ham} and \cite{HamKoch}. Besides the Bolza surface, the only known global maximizers of the systole function are the quotients of the upper half-plane by the principal congruence subgroups of $\PSL(2,\Z)$, which are punctured surfaces \cite{SchmutzCongruence}. All of the above examples have large isometry groups. For instance, they are all regular orbifold covers of hyperbolic polygons.

In this paper, we construct infinite families of local maxima of the systole function. The surfaces are arranged by levels, and to each level (except the first) correspond infinitely many surfaces. The result can be summarized as follows.

\begin{introthm} \label{thmA}
For every integer $n \geq 1$, there exist $L_n>0$ and $w_n>0$ such that for every finite, connected, $n$-regular, signed graph $\Gamma$ of girth at least $w_n$ there exists a closed hyperbolic surface $X(\Gamma)$ which is a local maximum of $\sys$ at height $\sys(X(\Gamma))=L_n$.
\end{introthm}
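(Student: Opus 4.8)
The plan is to realise $X(\Gamma)$ as an assembly of copies of a single, highly symmetric building block. First I would fix, for each $n$, a compact hyperbolic surface $B_n$ with $n$ geodesic boundary components all of the same length, chosen to have a large isometry group acting transitively on the boundary and to be extremal for a suitable constrained problem; the common boundary length and the isometry type of $B_n$ would be selected so that the boundary geodesics and the shortest essential interior geodesics of $B_n$ all share a common length $L_n$. Given a finite connected $n$-regular signed graph $\Gamma$, I would take one copy $B_v$ of $B_n$ per vertex $v$ and, for each edge $e=\{v,w\}$, glue a boundary component of $B_v$ isometrically to a boundary component of $B_w$, using the sign of $e$ to decide which of the two natural gluing isometries is used. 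This yields a closed hyperbolic surface $X(\Gamma)$ whose genus $G=G(\Gamma)$ is forced by $n$ and $|V(\Gamma)|$ through the Euler characteristic, carrying a canonical embedded multicurve $\calC_\Gamma=\{\gamma_e\}_{e\in E(\Gamma)}$ of gluing geodesics, each of length $L_n$.

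The first thing to check is $\sys(X(\Gamma))=L_n$. The upper bound is immediate from the gluing curves. For the lower bound I would let $\delta$ be any closed geodesic and look at the reduced closed edge-walk $\bar\delta$ it induces on $\Gamma$. If $\bar\delta$ is non-trivial it contains a non-backtracking cycle of combinatorial length at least the girth of $\Gamma$, hence at least $w_n$; since each curve of $\calC_\Gamma$ has the fixed length $L_n$ it carries an embedded collar of a definite width $c(L_n)>0$ which $\delta$ must traverse each time it is crossed, so $\ell_\delta\ge 2\,w_n\,c(L_n)$, and it suffices to take $w_n$ large enough that this exceeds $L_n$. If $\bar\delta$ is trivial then $\delta$ either is one of the $\gamma_e$ or is confined (after free homotopy) to a finite tree of blocks together with the collars of the separating curves, and there I would invoke an explicit hyperbolic estimate inside $B_n$ with semi-infinite cylinders attached --- precisely the extremal property built into $B_n$ --- to get $\ell_\delta\ge L_n$; the choice of $w_n$ also leaves room to absorb the finitely many bounded-complexity cases.

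The heart of the matter is local maximality. Here I would use the criterion that $X\in\T_G$ is a strict local maximum of $\sys$ as soon as the differentials $d\ell_\gamma$ of the systolic length functions positively span $T^*_X\T_G$ --- equivalently, there is no nonzero $v$ with $d\ell_\gamma(v)>0$ for all systolic $\gamma$ --- which, since $\sys$ is $\MCG$-invariant, also gives a strict local maximum in $\M_G$. I would coordinatise $\T_G$ by Fenchel--Nielsen-type coordinates adapted to $\calC_\Gamma$ (the lengths $\ell_e$ and twists $\tau_e$ at the gluing curves, together with the moduli of the blocks with boundary lengths matching the $\ell_e$), and note that the systolic curves of $X(\Gamma)$ are of three kinds: the gluing curves $\gamma_e$; the shortest interior geodesics of each block $B_v$; and the shortest geodesics crossing exactly one gluing curve and closing up in the two adjacent blocks. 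Given $v$ with $d\ell_\gamma(v)>0$ for all systolic $\gamma$, comparing $d\ell_{\gamma_e}$ with the differentials of the crossing curves around $\gamma_e$ (whose lengths vary oppositely to $\ell_e$, by the collar lemma) should force every $\dot\ell_e(v)=0$, and crossing curves meeting $\gamma_e$ from its two sides --- available because $\Isom(B_n)$ is rich --- should force every $\dot\tau_e(v)=0$, reducing $v$ to a pure block deformation that preserves all boundary lengths. For such $v$ the condition reduces, block by block, to the purely local assertion that the interior systolic curves of $B_n$ positively span the cotangent space of the Teichm\"uller space of $B_n$ with boundary lengths held fixed; this is where the symmetry is genuinely used, via the fact that $\Isom(B_n)$ acts on that space with no nonzero fixed vector and permutes the interior systolic curves, so each orbit-sum of differentials is invariant, hence zero, which places $0$ in the convex hull, with a dimension count (again designed into $B_n$) upgrading this to the interior. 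One concludes $v=0$.

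Finally, the Euler-characteristic formula for $G(\Gamma)$, together with the existence of $n$-regular graphs of arbitrarily large girth on suitable numbers of vertices (cycles when $n=2$, giving all large genera; an arithmetic progression of admissible genera when $n\ge 3$), would give the genus ranges and, via super-exponential counts of high-girth regular graphs and sign patterns, the growth of the level sets; distinct data $(\Gamma,\text{signs})$ give non-isometric surfaces because $\calC_\Gamma$ is canonically the set of systolic curves of the first kind, so $\Gamma$ with its signs can be recovered from $X(\Gamma)$, and a generic $\Gamma$ has trivial automorphism group, which forces $\aut(X(\Gamma))=1$. I expect the main obstacle to be the local maximality step, and within it the book-keeping that combines the block-local positive spanning with the length and twist contributions at the gluings into a genuine positive span of all of $T^*_X\T_G$, \emph{uniformly over the infinite family of $\Gamma$}. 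The real subtlety is that a generic $X(\Gamma)$ has no symmetry whatsoever, so --- unlike every previously known local maximum --- local maximality cannot be read off from a symmetry of $X(\Gamma)$ itself; symmetry is available only inside the blocks, and one must verify that twisting and boundary-length deformations at the gluings, and their interaction with the interior systolic curves, never open up a uniformly lengthening direction.
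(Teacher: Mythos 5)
Your proposal follows the same broad template---assemble a closed surface from copies of a fixed building block indexed by a high-girth regular graph, identify the systoles, and show the length differentials span---but it diverges from the paper in a way that creates real gaps, and it leaves the hardest step unconstructed.

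\textbf{The decomposition is structurally different.} You propose a block per \emph{vertex}: each $v\in V(\Gamma)$ carries a copy of an $n$-boundary-component surface $B_n$, glued along boundary geodesics according to the edges. The paper instead uses a four-holed sphere (the cross $C(t_n)$) per \emph{edge}, and the $n$ crosses around each vertex are glued cyclically to form a genus-one ring $R(n,t_n)$. Crucially, the rings are not disjoint blocks: each cross is shared by two transverse rings. That overlap is essential, because one of the three families of systoles (the $c$-curves) lives in \emph{pairs of transverse rings}, crossing the shared cross. Your picture has no curves that live in two blocks at once, and the paper's balancing equation that determines the parameter $t_n$ (Lemma 2.5) exactly equates the length of these crossing $c$-curves with the in-ring $a$- and $b$-curves. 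Also, in the paper the gluing curves (the $f$-curves) are strictly \emph{longer} than the systoles for $n\ge2$; your insistence that the boundary geodesics of $B_n$ also have length $L_n$ is an extra constraint not present in the paper, and it is unclear it can be met.

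\textbf{The existence of $B_n$ is asserted, not proved.} You list properties ("large isometry group acting transitively on the boundary", "extremal for a suitable constrained problem", common length $L_n$ of boundary and interior systoles) but never construct such a surface. That existence is where the analytic work lives: the paper proves a unique $t_n$ exists by a monotonicity argument, and then verifies nondegeneracy inequalities ($a<4t$, $a<e$, $a'(t_n)>0$, $c'(t_n)<0$) that later make the local-maximality argument work.

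\textbf{The local-maximality criterion and argument have gaps.} You state the criterion with strict inequalities (``no nonzero $v$ with $d\ell_\gamma(v)>0$ for all systolic $\gamma$''). That is too weak: one needs that for every nonzero $v$ some $d\ell_\alpha(v)<0$, i.e.\ no nonzero $v$ has $d\ell_\gamma(v)\ge0$ for all $\gamma$. More seriously, your reduction to a block-local positive-spanning statement uses $\Isom(B_n)$ to kill the length and twist parameters at the gluings, but the key cancellations the paper needs (that certain signed sums of cosines over twist directions vanish) do not come from symmetries of the individual block---they come from orientation-reversing symmetries of the \emph{infinite cover} $\Sigma(n)$, the tree of rings. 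Nothing in your sketch supplies those cancellations, and the sentence ``crossing curves meeting $\gamma_e$ from its two sides should force $\dot\tau_e(v)=0$'' is precisely the step that needs a genuine argument. Separately, proving that the length differentials of the systoles are linearly independent at $X(\Gamma)$ is nontrivial: the paper uses Wolpert's twist--length duality plus a Gershgorin-type estimate on the matrix of cosines of intersection angles in a ring. Your proposal has no analogue of this, so the ``positive span'' you want is not established even at the block level.
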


Here a \emph{signed graph} is a graph equipped with a cyclic ordering of the edges adjacent to any vertex and a sign attached to any two consecutive edges in the cyclic order such that the product of the signs around any vertex is negative. See \subsecref{subsec:signed_graph} for more details.

The genus of the surface $X(\Gamma)$ is equal to $E + 1$ where $E$ is the number of edges in $\Gamma$ (\thmref{thm:systoles_closed_surface}), and the map $\Gamma \mapsto X(\Gamma)$ is injective on the set of isomorphism classes of signed graphs (\thmref{thm:auto}). The sequence $(L_n)_{n\geq 1}$ grows asymptotically linearly in $n$ (\lemref{lem:estimate1}), yielding the first explicit examples of local maxima at arbitrarily large heights.

For $n=1$, there is only one connected $1$-regular signed graph $\Gamma$ and the corresponding surface $X(\Gamma)$ is the Bolza surface (\thmref{thm:bolza}).

{For $n=2$, we obtain two local maxima in every genus $g \geq 13$ which is congruent to $1 \mod 3$ and one local maximum in all other genera larger than $13$ (see  \remref{rem:n=2} and \subsecref{subsec:length}).

For $n \geq 3$, the situation changes drastically. Indeed, the number of connected $n$-regular signed graphs with $(g-1)$ edges grows rapidly with $g$, and an asymptotically positive proportion of them have girth larger than $w_n$. Furthermore, most of them have trivial automorphism group, so that the same holds for the resulting surfaces.

\begin{introthm} \label{thmB}
Let $n\geq3$ and let $g$ be a positive integer such that $2(g-1)/n$ is also an integer. If $g$ is large enough, then the number of local maxima of the systole function in $\M_g$
at height $L_n$ whose automorphism group is trivial is at least
\[
\alpha_n \big(\beta \, g \big)^{\left(1-\frac2n \right) g} 
\] 
where $\beta>0$ is independent of $n$ and $g$, and $\alpha_n>0$ depends only on $n$.
\end{introthm}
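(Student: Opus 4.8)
The plan is to combine the two structural theorems quoted in the introduction---namely that the genus of $X(\Gamma)$ equals $E+1$ where $E$ is the number of edges, that $\Gamma\mapsto X(\Gamma)$ is injective on isomorphism classes, and that $X(\Gamma)$ is a local maximum of $\sys$ at height $L_n$ whenever $\Gamma$ is $n$-regular of girth at least $w_n$ (\thmref{thmA}, \thmref{thm:systoles_closed_surface}, \thmref{thm:auto})---with a counting estimate for $n$-regular graphs. Fix $n\ge 3$. An $n$-regular graph on $V$ vertices has $E=nV/2$ edges, so the hypothesis that $2(g-1)/n\in\Z$ is exactly what is needed for there to exist an $n$-regular graph with $E=g-1$ edges, i.e.\ with $V=2(g-1)/n$ vertices, which by the genus formula produces surfaces in $\M_g$. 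So it suffices to show: among connected $n$-regular graphs on $V$ vertices, a definite proportion have girth at least $w_n$, a definite proportion have trivial automorphism group, and a definite proportion admit a signed-graph structure (or, more carefully, to count isomorphism classes of \emph{signed} graphs of that type up to sign-graph isomorphism); and then to produce the super-exponential lower bound on that count.

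First I would recall the asymptotic enumeration of $n$-regular graphs. By the configuration model (Bollob\'as, Bender--Canfield, Wormald), the number of labelled $n$-regular graphs on $V$ vertices is asymptotically $c_n\,\big((nV)!/((nV/2)!\,2^{nV/2}(n!)^V)\big)$, and by Stirling this is of the form $(\gamma_n V)^{(n/2-1)V}$ up to lower-order factors, for an explicit constant $\gamma_n$. Writing $V=2(g-1)/n$ this is exactly of the shape $(\beta g)^{(1-2/n)g}$ with $\beta$ independent of $n$ after absorbing $n$-dependent constants into the prefactor $\alpha_n$; this is where the exponent $(1-2/n)g$ and the form of the bound in the statement come from. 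Dividing by $V!$ to pass to isomorphism classes only costs a factor $(V/e)^{V}$, i.e.\ it changes $\beta$ but not the exponent, and the same is true for the further bounded-per-vertex factors introduced by choosing a signed-graph structure (a cyclic ordering of the $n$ edges at each vertex together with signs, of which there are at most $(n-1)!\,2^{n}$ per vertex, a constant depending only on $n$). So up to adjusting $\alpha_n$ and $\beta$ the count of isomorphism classes of connected $n$-regular signed graphs on $V$ vertices is still at least $\alpha_n(\beta g)^{(1-2/n)g}$.

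Next I would argue that imposing girth $\ge w_n$, connectedness, and triviality of the automorphism group each removes only an asymptotically negligible (in fact $o(1)$) fraction. For connectedness and for triviality of $\aut$, these are classical facts: a uniformly random $n$-regular graph ($n\ge 3$) is connected and asymmetric with probability $1-o(1)$ (Bollob\'as, Wormald; the asymmetry statement is Bollob\'as's theorem that random regular graphs have trivial automorphism group a.a.s.). For the girth condition, the number of short cycles in a random $n$-regular graph converges to a Poisson distribution (again configuration-model computations), so the probability of girth $\ge w_n$ tends to a positive constant $e^{-\sum_{k<w_n}\lambda_k}$ depending only on $n$ and $w_n$; absorbing this constant into $\alpha_n$ is harmless. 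One must also check these conclusions survive the passage to signed graphs: since a signed structure is just extra decoration per vertex, a uniformly random signed $n$-regular graph has underlying graph uniform, so all the a.a.s.\ statements transfer; and if the underlying graph is asymmetric then any two signed graphs with that underlying graph are isomorphic as signed graphs only if the decorations match, which at worst reduces the count by the constant per-vertex factor already accounted for, and the automorphism group of the resulting surface is trivial because by \thmref{thm:auto} it injects into the automorphism group of the signed graph, which embeds in that of the underlying graph. Putting these together: the number of isomorphism classes of connected $n$-regular signed graphs of girth $\ge w_n$ with trivial automorphism group and with $g-1$ edges is at least $\alpha_n(\beta g)^{(1-2/n)g}$ for $g$ large, and by \thmref{thmA} and injectivity of $\Gamma\mapsto X(\Gamma)$ each yields a distinct local maximum of $\sys$ in $\M_g$ at height $L_n$ with trivial automorphism group. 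The main obstacle is bookkeeping the passage between labelled graphs, isomorphism classes, and signed isomorphism classes carefully enough to be sure the Stirling estimate really delivers the claimed exponent $(1-2/n)g$ with $\beta$ independent of $n$; the probabilistic inputs (a.a.s.\ connected, asymmetric, Poisson short-cycle counts) are standard and can be cited, but one should double-check that the version of asymmetry available in the literature is uniform enough to combine with the girth conditioning (conditioning on a positive-probability event preserves a.a.s.\ statements, so this is fine).
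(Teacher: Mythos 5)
Your proposal is correct and follows essentially the same route as the paper's proof of Theorem~\ref{Thm:Count}: invoke Theorems~\ref{thm:local_max} and \ref{thm:auto} to reduce the problem to counting isomorphism classes of connected, asymmetric, $n$-regular signed graphs of girth larger than $w_n$ with $g-1$ edges, then feed in Bollob\'as's enumeration of unlabelled regular graphs, Wormald's asymptotic for the girth restriction, Bollob\'as's theorem that almost all regular graphs are asymmetric, and finish with Stirling's formula. The one slip is in your Stirling bookkeeping: the configuration-model count $(nV)!/\bigl(2^{nV/2}(nV/2)!(n!)^V\bigr)$ of labelled graphs is of the form $(\gamma_n V)^{(n/2)V}$, not $(\gamma_n V)^{(n/2-1)V}$ as you wrote, and dividing by $V!\sim(V/e)^V$ does drop the exponent by $V$ (it is not just a change of $\beta$). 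The two errors cancel, so your final exponent $(n/2-1)V=(1-2/n)(g-1)$ for isomorphism classes is what the paper obtains. The paper also pins down the size of $\alpha_n$ (roughly $\log\log\log(1/\alpha_n)\sim n\log(1+\sqrt2)$, driven by the $\exp(-\sum_{i<w_n}(n-1)^i/2i)$ factor from Wormald's short-cycle count together with Corollary~\ref{cor:girth_bound}), which you do not attempt but which the statement does not strictly require.
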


In other words, the number of asymmetric local maxima of $\sys$ in $\M_g$ at each fixed height $L_n$ grows super-exponentially along an arithmetic sequence of genera $g$. In particular, level sets of the systole function can have an arbitrarily large number of point components. 

The existence of asymmetric local maxima was conjectured in \cite[p.565]{SchmutzMaxima} and this intuition was repeated in \cite[p.437]{SchmutzMoreExamples}.

\subsection*{Why study the systole function?}

Akrout \cite{Akrout} proved that $\sys$ is a topological\footnote{Note that $\sys$ is not smooth wherever there is more than one systole.} Morse func\-tion on $\T_g$. This implies that in theory one could compute topological invariants of $\M_g$ by finding the Morse singularities of $\sys$ and their indices. For example, the orbifold Euler characteristic of $\M_g$ is given by the formula
\begin{equation} \label{eq:Euler_char}
\chi(\M_g) = \sum_{x \in \calC} \frac{(-1)^{\ind(x)}}{|\aut(x)|}
\end{equation}
where $\calC$ is a set of representatives of the critical points of $\sys$ in $\T_g$ under the action of the mapping class group, $\ind(x)$ is the Morse index of $\sys$ at $x$, and $\aut(x)$ is the group of automorphisms of $x$ \cite{Akrout,SchmutzMorse}. 

Given how difficult it is to identify critical points of $\sys$, \eqnref{eq:Euler_char} is unlikely to be of any practical use. Luckily, the orbifold Euler characteristic of $\M_g$ was computed by Harer and Zagier \cite{HarerZagier} to be
\[
\chi(\M_g) = \frac{B_{2g}}{4g(g-1)}
\]
using different means, where $B_{2g}$ is the $2g$-th Bernoulli number (see also \cite{Penner}). This implies that $|\chi(\M_g)|$ grows roughly like $g^{2g}$, and the number of critical points should be at least as large. That many of these critical points are local maxima is somewhat surprising, and we expect that there are many more than the ones found here. This might indicate that the systole function is inefficient in the sense that it has more singularities than is really required by the topology.

Via its level sets, the systole function gives a ``shape'' to moduli space that interacts with the Teichm\"uller metric, the Weil--Petersson metric and the Thurston metric. Many results in the theory of Riemann surfaces can be stated in terms of the systole function. The first instance is perhaps Mumford's compactness criterion stated earlier. As another example, Keen's collar lemma \cite[p.380]{Primer} and Harvey's observation that the curve graph is connected \cite{Harvey} together imply that the sublevel sets $\{ x \in \M_g \st \sys(x) \leq \eps \}$ are connected if $\eps$ is sufficiently small. That is, moduli space has one end.

More recently, the following results were obtained:
\begin{itemize}
\item the asymptotic cone of $\M_g$ equipped with the Teichm\"uller metric is isometric to the infinite cone over the quotient of the curve complex by the mapping class group \cite{FarbMasur};
\item estimates for the diameter of the thick part of moduli space were given in \cite{CavendishParlier} and \cite{RafiTao};
\item the closed geodesics of length at most $L/g$ in $\M_g$  lie in $\sys^{-1}(I)$ for a fixed compact interval $I \subset \R_+$ \cite{LeiningerMargalit} and the associated peusdo-Anosov homeomorphisms all arise as monodromies of Dehn fillings of finitely many fibered hyperbolic $3$-manifolds \cite{FarbLeiningerMargalit};
\item finite covers of a fixed surface in $\M_2$ are not asymptotically dense in the thick part of $\M_g$ with respect to the Teichm\"uller metric \cite{FletcherKahnMarkovic} nor are they coarsely dense with respect to the Weil--Petersson metric \cite[Theorem 7.1.3]{Doria} (whether they are coarsely dense in the Teichm\"uller metric is an open question of Mirzakhani);
\item the expected value of the systole of a surface in $\M_g$ with respect to the Weil--Petersson volume converges to $1.18915\ldots$ as $g \to \infty$ \cite{MirzakhaniPetri}.
\end{itemize}

Another question of Mirzakhani is whether moduli space has ``long fingers''. We may define the \emph{finger} associated with a local maximum $x$ to be the component $F$ of the super\-level set $\{ y \in \M_g \st \sys(y)>L\}$ containing $x$, where $L$ is the smallest positive number such that $F$ does not contain any other singularities than $x$. The \emph{length} of the finger $F$ is then $\sys(x)-L$. In other words, how large can the total variation of the systole function be between a local maximum and the nearest (with respect to variation) singularity? We do not answer this question here, but the examples in \thmref{thmA} provide a place to start. There are many other open questions related to the systole function; see Section 4 of \cite{Parlier} for instance.

\subsection*{Why study local maxima?}

There are a number of interesting necessary conditions for a closed surface $x$ to be a local maximum of the systole function, namely,
\begin{itemize}
\item every closed geodesic on $x$ must intersect at least two systoles (in particular, the systoles must fill) \cite[Lemma 2.3]{SchmutzMaxima};
\item the systoles must be non-separating \cite[Proposition 2.6]{SchmutzMaxima};
\item there must be at least $(6g-5)$ systoles \cite[Theorem 2.8]{SchmutzMaxima}.
\end{itemize}
Moreover, two systoles on a closed surface can intersect at most once (this is always true, not just at local maxima). Even constructing sets of curves that satisfy all of these topological conditions is a non-trivial task (see \cite{Anderson} for related results).

In an unpublished manuscript, Thurston suggested that the set of surfaces whose systoles fill should form a spine for moduli space, but his proof was incomplete. See \cite{Ji} for a discussion of its limitations and an alternative construction of spines with positive codimension. It seems that little is known about the set of surfaces whose systoles fill. Local maxima of the systole function are contained in this set.

Another naive reason to study local maxima is that the global maximum is among them. One might hope to end up on or near the top of moduli space  by searching for local maxima and perhaps increase the lower bound in \eqref{eq:globalmax}. Unfortunately, the height of our local maxima grows at most like $\log \log g$ rather than $\log g$. It is possible that by using a similar trick as in \cite{Petri}, one could obtain a sequence of examples with systole growing logarithmically in the genus, although we did not explore that idea further.

\subsection*{Proof outline}

As explained in \cite{SchmutzMaxima}, showing that a surface $x$ is a local maximum of the systole function consists in three steps:
\begin{enumerate}
\item finding the set $S$ of systoles of $x$;
\item showing that differential of the vector of lengths of the curves in $S$---a function on Teichm\"uller space---is injective at $x$;
\item proving that under any non-trivial infinitesimal deformation of $x$, at least one of the curves in $S$ shrinks, i.e., has negative differential in that direction. 
\end{enumerate}

Step (2) is necessary \cite[Theorem 2.7]{SchmutzMaxima} and is a problem of general interest \cite[p.378]{Gardiner}. It is equivalent to showing that certain quadratic differentials associated with the curves span the cotangent space to Teichm\"uller space over $\R$. Examples of such bases were described in \cite[Theorem 3.4]{WolpertDuality}. The main tool we use here is the famous cosine formula for the variation of length along twist deformations \cite{WolpertTwist,Kerckhoff}.  The question of which finite sets $F$ of curves are such that their lengths define a global embedding of Teichm\"uller space into $\R_+^F$ is closely related and classical \cite{SchmutzGerman,HamParam1,HamParam2}\cite[p.287]{Primer}. 

The proof of step (3) is easier, but still novel. Whereas Schmutz Schaller relied heavily on symmetries to prove this step, we manage with only virtual symmetries. That is, even though the surfaces we construct have trivial automorphism group in general, they have infinite covers to which all the systoles lift and where the systoles fall into only three orbits under the automorphism group. These covers are the trees of rings of \subsecref{subsec:tree}.

The proofs of (1), (2) and (3) are unified in the sense that they treat all $n\geq1$ and all regular signed graphs $\Gamma$ at once. We believe that the arguments could be applied to similar constructions with different building blocks (the rings of \subsecref{subsec:the_ring}). This is in contrast with \cite{SchmutzMaxima} where a case by case analysis was needed, with some arguments as ad hoc as ``We indicate for all combinatorial possibilities a coefficient of $\zeta$ which is negative.''

\subsection*{Organization}

The paper is organized as follows. \secref{sec:systoles} is devoted to the construction of the surfaces $X(\Gamma)$ and fin\-ding their systoles. Steps (2) and (3) of the above program are carried out in Sections \ref{sec:regularity} and \ref{sec:minimality} respectively, thereby proving \thmref{thmA}. In \secref{sec:isometries}, we show that any orientation-preserving isometry $X(\Gamma_1) \to X(\Gamma_2)$ is induced by a signed graph isomorphism $\Gamma_1 \to \Gamma_2$. Finally, in \secref{sec:counting} we estimate the number of asymmetric surfaces $X(\Gamma)$ constructed in each genus, proving \thmref{thmB}.

\begin{acknowledgements}
We thank Robert Young for suggesting the use of the Gershgorin cir\-cle theorem and Dmitri Gekhtman for pointing out Wolpert's length-twist duality, which together lead to the proof of step (2) (\thmref{thm:lengths_determine}). We also thank Curt McMullen for comments on an earlier draft. MFB and KR were partially supported by Discovery Grants from the Natural Sciences and Engineering Research Council of Canada (RGPIN 06768 and 06486 respectively).  
\end{acknowledgements}

\section{The construction} \label{sec:systoles}

In this section, we construct a highly symmetric surface $R(n,t)$ of genus $1$ with $2n$ boundary components of length $4t$ each, for any $n \geq 1$ and any $t>0$. We then fix a specific value of $t$ for each $n$ and build closed surfaces out of pieces isometric to $R(n,t_n)$. 

\subsection{Trigonometry}

We first gather some trigonometric formulas here for use throughout the paper. See e.g. \cite[p.454]{Buser}.

\vspace{\baselineskip}
\noindent\textbf{Right triangles}

\noindent\begin{minipage}{.5\textwidth}
\begin{align}
 \cosh c &= \cosh a \cosh b \label{eq:righttriangle}\\
 \cos \beta &= \cosh b \sin \alpha \label{eq:righttriangle2}
\end{align}
\end{minipage}
\begin{minipage}{.45\textwidth}
\vspace{2em}
\centering
{\includegraphics{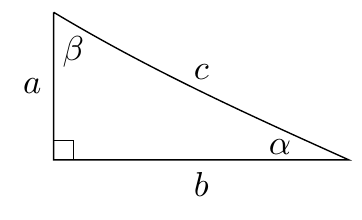}}
\label{fig:righttriangle}
\end{minipage}

\noindent\textbf{Right-angled pentagons}

\noindent\begin{minipage}{.5\textwidth}
\begin{align}
   \cosh c &= \sinh a \sinh b \label{eq:pentsinh} \\ 
   \cosh c &= \coth \alpha \coth \beta \label{eq:pentcoth}
\end{align}
\end{minipage}
\begin{minipage}{.45\textwidth}
\vspace{2em}
\centering
{\includegraphics{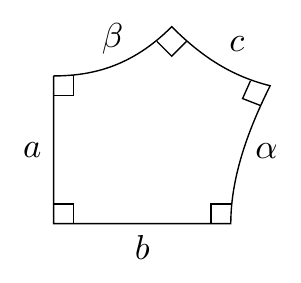}}
\label{fig:pentagon}
\end{minipage}

\subsection{The cross} \label{sec:cross}
We start with a right-angled pentagon $P=P(t)$ with two non-adjacent sides of length $t>0$. Let $\sigma$ be the side between those of length $t$ and let $u$ be the length of each of the other two sides. We have
\begin{equation} \label{eq:sigma}
\cosh \sigma(t) = \coth^2 t = \sinh^2 u(t)
\end{equation}
by Equations \eqref{eq:pentsinh} and \eqref{eq:pentcoth}. Reflect $P$ across the two sides of length $u$ and the vertex opposite to $\sigma$ to obtain a right-angled octagon $O=O(t)$ with side lengths alternating between $2t$ and $\sigma$. Double $O$ across the sides of length $\sigma$ to form a four-holed sphere $C=C(t)$ that we call a \emph{cross}. Each of the four boundary geodesics of $C$ has length $4t$. We refer to them as the left, right, top and bottom boundaries of $C$ following \figref{fig:cross}. Similarly, the cross has a front and a back. 

\begin{figure}[htp]
\centering
\subcaptionbox{The pentagon}
{\includegraphics{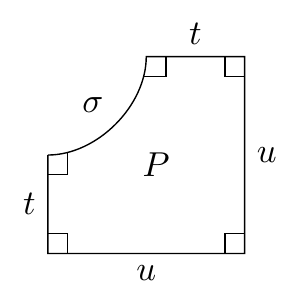}}
\subcaptionbox{The octagon}
{\includegraphics{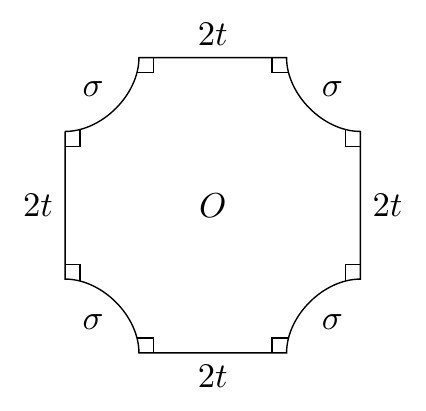}}
\subcaptionbox{The cross}
{\includegraphics{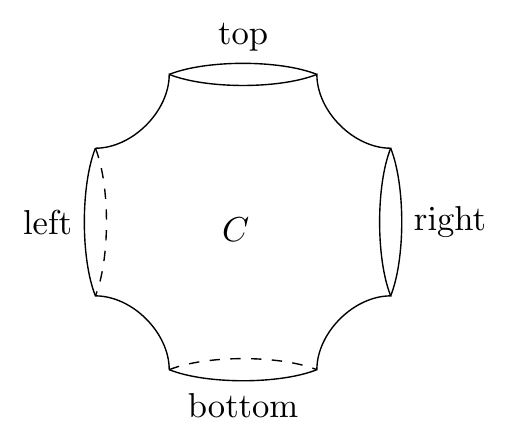}}
\caption{The cross $C$ is made with two octagons, each assembled from four pentagons}\label{fig:cross}
\end{figure}

We note in passing that $C$ is an orbifold cover of a quadrilateral $Q=Q(t)$ with three right angles, one angle equal to $\pi/4$, one side of length $t$ and one side of length $\sigma/2$ obtained by cutting $P$ along the median between $\sigma$ and the opposite vertex. The closed surfaces we construct in the end are also orbifold covers of $Q$, although not regular covers in general.

\subsection{The ring} \label{subsec:the_ring}

Let $n\geq 1$ be an integer. We take a string of $n$ crosses $C_1, C_2, \ldots, C_n$ where the right boundary of $C_j$ is glued to the left boundary of $C_{j+1}$ without twist for $j=1,\ldots,(n-1)$. Finally, the left boundary of $C_1$ is glued to the right boundary of $C_n$ with a half twist (see \figref{fig:ring}). The resulting surface $R=R(n,t)$ is called a \emph{ring}. It is a surface of genus one with $2n$ boundary components.

\begin{figure}[htp]
\centering
{\includegraphics{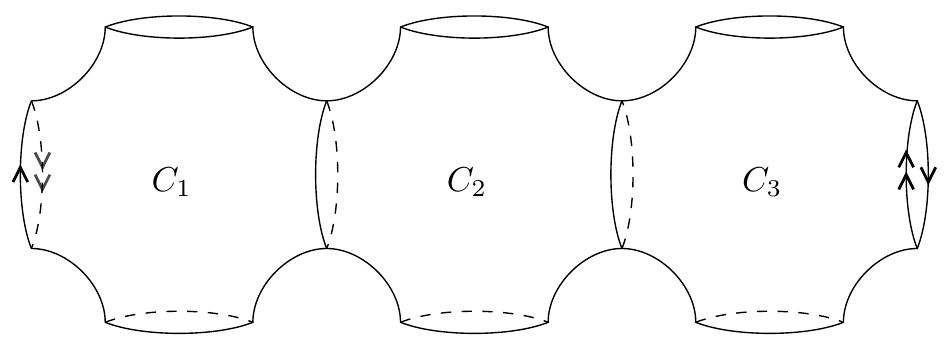}}
\caption{The ring $R$ is a string of $n$ crosses with its ends glued by a half twist}\label{fig:ring}
\end{figure}

There is an alternative description of the ring which is useful for drawing pictures so that no part of the ring is hidden. Take a strip of $2n$ octagons $O_1, \ldots O_{2n}$ with the right side of each glued to the left side of the next and the right side of $O_{2n}$ glued to the left side of $O_1$, forming a topological annulus $A=A(n,t)$. Then the top left and top right sides of $O_j$ are glued to the bottom left and bottom right sides of $O_{n+j}$ respectively for $j=1,\ldots,2n$  in order to form $R$, where indices are taken modulo $2n$ (see \figref{fig:strip}). 

\begin{figure}[htp]
\centering
{\includegraphics{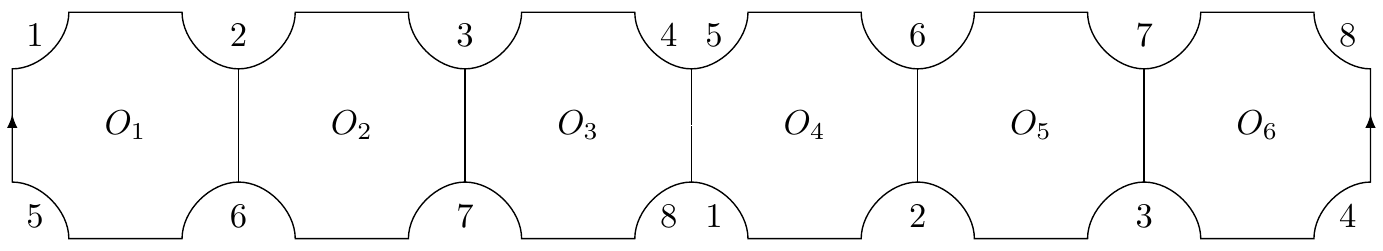}}
\caption{The ring $R$ is also a strip of $2n$ octagons with its left and right sides glued and the segments labelled $\sigma$ identified in pairs in the pattern shown}\label{fig:strip}
\end{figure}

In other words, the sides of the annulus $A$ labelled $\sigma$ are glued in pairs by a glide reflection that reflects across the core geodesic $e$ of $A$ (the horizontal axis of symmetry in \figref{fig:strip}) and translates halfway around $e$. The union of the octagons $O_j$ and $O_{n+j}$ is equal to the cross $C_j$ from the previous description.

\subsection{Geodesics in the ring}

Following Schmutz Schaller, we will often use the same symbol for the name of a curve and its length. The closed geodesics separating adjacent crosses in the ring are called \emph{$f$-curves}. More precisely, for each $j$ from $1$ to $n$, we let $f_j$ be the left boundary of $C_j$. Each $f$-curve has length $4t$. The geodesic that runs along the horizontal axis of symmetry of all the crosses is called $e$, which has length $4n \cdot u$ or
\begin{equation} \label{eq:length_of_e}
e=4n \arcsinh(\coth t). 
\end{equation}

The next geodesics of interest are called \emph{$a$-curves} and \emph{$b$-curves}. For each $j \in \{1,\ldots, 2n\}$, let $a_j$ be the geodesic joining the bottom of the left side of $O_j$ and the top of the left side of $O_{n+j}$ (these two points are identified in $R$) and is otherwise disjoint from the seams and the octagons $O_{n+j}, O_{n+j+1}, ..., O_{n+j+(n-1)}$, where indices are taken modulo $2n$ (see \figref{fig:curves}). Similarly, we let $b_j=\rho_{f_j}(a_j)$ where $\rho_{f_j}:R \to R$ is the reflection across the geodesic $f_j$. By symmetry, all the $a$-curves and $b$-curves have the same length which we denote by $a$.
\begin{figure}[htp]
\centering
{\includegraphics{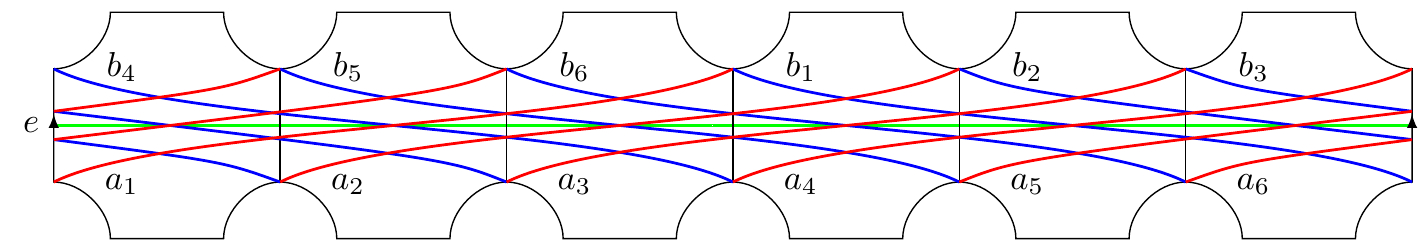}}
\caption{The geodesic $e$ (in green), the $a$-curves (in red), and the $b$-curves (in blue)}\label{fig:curves}
\end{figure}

Observe that $i(a_j,e)=i(b_j,e)=i(a_j,b_j)=1$ for every $j$ and that the curves $a_j$, $b_j$ and $e$ bound two triangles with the same interior angles. These two triangles are therefore congruent, so that their side lengths are $a_j/2$, $b_j/2$ and $e/2$. In particular, they are isoceles since $\ell(a_j)=\ell(b_j)$. The altitude of each triangle has length $t$ and bisects the base, which yields the formula
\begin{equation} \label{eq:length_of_a}
\cosh(a/2) = \cosh(t) \cosh(e/4)
\end{equation}  
by \eqnref{eq:righttriangle} for right triangles. One such pair of triangles is illustrated in \figref{fig:angles}.

\begin{remark}
Surfaces of genus $1$ with $m$ boundary components are studied extensively in \cite{SchmutzGerman} where it is shown that the lengths of the boundary geodesics and the $a$-, $b$- and $e$-curves in such a surface define an injective function on Teichm\"uller space. Actually, the length of any boundary geodesic can be recovered from the remaining ones. This detailed analysis is pursued in \cite[Section 4]{SchmutzMaxima} where these rings serve as building blocks for constructing maximal surfaces. We combine rings differently, resulting in a more flexible construction.
\end{remark}

\subsection{Symmetries of the ring}

There is an orientation-reversing isometric involution $\rho_{\text{seams}} : R \to R$ that has the union of the $\sigma$-segments (the \emph{seams} of the crosses) as its set of fixed points. The map $\rho_{\text{seams}}$ exchanges the front and back octagons in each cross $C_j \subset R$. It acts as a glide reflection along $e$ by half its length. 

Another obvious isometry is the reflection $\rho_e : R \to R$ across the geodesic $e$. This isometry permutes the top and bottom of each cross. 

For each $j \in \{1,\ldots,n\}$, there is a reflection $\rho_{f_j}$ across the geodesic $f_j$.

Another useful isometry $\eta : R \to R$ simply shifts each $O_j$ to $O_{j+1}$, where indices are taken modulo $2n$. That is, $\eta$ is a hyperbolic translation along $e$ to the right by distance $e/2n=2u$. 

Lastly, for each $j \in \{1,\ldots,n\}$ the composition $\nu_j:= \eta \circ \rho_{f_j}$ is the reflection of $R$ across the vertical axis of symmetry of $C_j$.

\subsection{Systoles in the ring}

We now determine the systoles in the ring under certain conditions that depend on $t$. We will later fix a value of $t$ for which these conditions are satisfied.

The following well-known criterion is very useful for finding systoles.

\begin{lem} \label{lem:surgery}
If two closed geodesics $\alpha$ and $\beta$ on a compact hyperbolic surface with geodesic boun\-da\-ry intersect at least twice transversely, then there exists a closed geodesic $\gamma$ of length strictly less than $(\ell(\alpha)+\ell(\beta))/2$. In particular, two distinct systoles can intersect at most once. 
\end{lem}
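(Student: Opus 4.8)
The plan is to use a standard surgery argument on the intersection points of $\alpha$ and $\beta$. Since $\alpha$ and $\beta$ meet at least twice transversely, pick two consecutive intersection points $p$ and $q$ along $\alpha$ (consecutive in the sense that the subarc of $\alpha$ between them contains no other point of $\alpha\cap\beta$; such a pair exists because there are at least two intersection points). Write $\alpha = \alpha_1 \cup \alpha_2$ for the two subarcs of $\alpha$ with endpoints $p$ and $q$, and $\beta = \beta_1 \cup \beta_2$ likewise. Then $\alpha_1 \cup \beta_1$, $\alpha_1 \cup \beta_2$, $\alpha_2 \cup \beta_1$, $\alpha_2 \cup \beta_2$ are four closed (piecewise geodesic) loops, and each of the four arcs $\alpha_1,\alpha_2,\beta_1,\beta_2$ appears in exactly two of them. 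Hence the sum of the lengths of these four loops equals $2(\ell(\alpha)+\ell(\beta))$, so the shortest of them has length at most $(\ell(\alpha)+\ell(\beta))/2$.

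Next I would upgrade this to a strict inequality and to a genuine closed geodesic. At least one of the four concatenations is not freely homotopic to a point or to a power of a simple closed curve that degenerates into the boundary — more carefully, among the four loops, at least one is homotopically essential and not homotopic rel nothing to a geodesic of the same length, because the loop has a genuine corner at $p$ or $q$ (the arcs cross transversely, so the concatenation is not smooth there). A corner can always be shortened by a homotopy: replacing the broken path near the corner by a short geodesic segment strictly decreases length, so the geodesic representative $\gamma$ in the free homotopy class is strictly shorter than the broken loop, giving $\ell(\gamma) < (\ell(\alpha)+\ell(\beta))/2$. The one case to rule out is that all essential candidates are homotopically trivial; but if, say, $\alpha_1 \cup \beta_1$ and $\alpha_2 \cup \beta_2$ were both null-homotopic, then $\alpha$ would be homotopic to $\beta_1^{-1}\cup\beta_2^{-1}$ reversed, forcing $\alpha$ and $\beta$ to be homotopic, and one can then reduce the intersection number below two, contradicting that $\alpha,\beta$ are the geodesic representatives with $i(\alpha,\beta)\geq 2$. (On a surface with boundary, a closed geodesic is the unique length minimizer in its free homotopy class and cannot be homotoped into the boundary, so no degeneration occurs.)

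For the final sentence, suppose $\alpha$ and $\beta$ are two \emph{distinct} systoles that intersect at least twice. By the above there is a closed geodesic $\gamma$ with $\ell(\gamma) < (\ell(\alpha)+\ell(\beta))/2 = \sys(X)$, contradicting the definition of the systole. Hence distinct systoles intersect at most once.

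The main obstacle I expect is the bookkeeping in the strict-inequality step: one must be sure that at least one of the four surgered loops is both essential and genuinely broken (has a corner that can be smoothed to strictly shorten it), and handle the degenerate possibility that the surgeries only produce null-homotopic or boundary-parallel curves. This is where the hypothesis that $\alpha$ and $\beta$ are the geodesic representatives — so that $i(\alpha,\beta)\geq 2$ is realized geometrically and cannot be reduced — gets used. Everything else (the length-counting identity, the no-degeneration remark on surfaces with boundary) is routine.
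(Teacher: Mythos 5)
Your overall strategy — surgery at the intersection points, length bookkeeping, then smoothing corners to get a strict inequality — is the same as the paper's, but your execution is more complicated and has a real gap in the essentiality step.

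The paper's version is simpler than your four-loop averaging: pick any two intersection points $p,q$ and take the \emph{shorter} subarc of $\alpha$ between them and the shorter subarc of $\beta$ between them. Their union $\delta$ automatically has length at most $\ell(\alpha)/2+\ell(\beta)/2$, so no averaging over four combinations is needed, and the "consecutive intersection points" restriction is unnecessary. The corners at $p$ and $q$ then give the strict shortening, as you say.

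The gap is in your claim that one of your four loops is homotopically essential. You write that at least one is essential "because the loop has a genuine corner at $p$ or $q$," but having a corner has nothing to do with being essential — a broken loop can perfectly well bound a disk. Your fallback case analysis is also incomplete: you only rule out the scenario where $\alpha_1\cup\beta_1$ and $\alpha_2\cup\beta_2$ are \emph{both} null-homotopic, but the averaging only tells you the \emph{shortest} of the four is at most $(\ell(\alpha)+\ell(\beta))/2$. If exactly that one is null-homotopic and the other three are essential but too long, your argument has nothing to say. What closes the gap — and what the paper invokes in one sentence ("geodesic bigons are non-contractible") — is that \emph{none} of these loops can be null-homotopic: a loop formed by two geodesic arcs joining $p$ to $q$ and transverse at those endpoints lifts, if null-homotopic, to a closed loop in $\mathbb{H}^2$ made of two distinct geodesic segments between the same pair of points, contradicting uniqueness of geodesics in $\mathbb{H}^2$. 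Once you have that, your worry disappears and the rest of your argument (corner-smoothing, existence of a geodesic representative on a compact surface with geodesic boundary, the application to systoles) goes through.
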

\begin{proof}
Let $p$ and $q$ be two intersection points of $\alpha$ and $\beta$. Construct a curve $\delta$ by taking the shorter subarc of $\alpha$ between $p$ and $q$ and similary for $\beta$. Since geodesic bigons are non-contractible, $\delta$ is homotopic to a closed geodesic $\gamma$ that is strictly shorter. 
\end{proof}

We will apply the contrapositive of the last sentence in the statement repeatedly: \emph{if two systoles intersect at least twice, then they coincide}. We use this fact in combination with the various symmetries of the ring to determine its systoles. We proceed by elimination, arguing that any geodesic---save for a few exceptions---intersects some of its translates at least twice transversally, hence cannot be a systole in view of the above. 

\begin{prop} \label{prop:systoles_in_ring}
Let $n\geq 1$ and $t>0$. Assume that $a(t) < 4t$ and $a(t) < e(t)$. Then the systoles in $R(n,t)$ are exactly the $a$-curves  and the $b$-curves.
\end{prop}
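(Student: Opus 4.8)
The plan is to argue by elimination using \lemref{lem:surgery}: I want to show that every closed geodesic $\gamma$ in $R=R(n,t)$ other than an $a$-curve or a $b$-curve either has length $\geq 4t > a$ outright, or else intersects one of its own images under the symmetry group (the translation $\eta$, the reflections $\rho_e,\rho_{f_j},\nu_j$, or the glide reflection $\rho_{\text{seams}}$) at least twice transversally, hence cannot be a systole. Since the $a$- and $b$-curves all have the common length $a$, and the hypotheses $a<4t$ and $a<e$ say they are shorter than the $f$-curves and $e$, confirming that $a<4t$ also guarantees the systole length is $a$ (shorter than every boundary), so once the list of candidates is pared down to the $a$- and $b$-curves we are done.

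First I would separate geodesics by how they sit relative to the $f$-curves $f_1,\dots,f_n$ (the left boundaries of the crosses $C_j$), which cut $R$ into the $n$ crosses. If $\gamma$ is disjoint from all $f_j$, it lies in a single cross $C_j$, a four-holed sphere; there it is either boundary-parallel (length $\geq 4t$) or essential in the four-holed sphere, and an essential non-peripheral simple closed curve in a four-holed sphere with all cuffs of length $4t$ can be shown to have length $>4t$ by a pentagon/collar estimate — this is the case where \eqnref{eq:sigma} for $\sigma$ and the explicit geometry of the octagon $O$ come in. If $\gamma$ crosses some $f_j$, then because $f_j$ is the fixed set of the reflection $\rho_{f_j}$, either $\gamma$ is $\rho_{f_j}$-invariant or $\gamma$ and $\rho_{f_j}(\gamma)$ are distinct; in the latter case $i(\gamma,\rho_{f_j}(\gamma))\geq 2$ unless $\gamma$ meets $f_j$ exactly once and is carried in a controlled way, and one checks that geodesics meeting a single $f_j$ exactly once transversally and otherwise behaving minimally are precisely the $a_j$ and $b_j$ (by \eqnref{eq:length_of_a}, these bound isoceles triangles with $e$). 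A geodesic meeting two or more distinct $f$-curves can be pushed by $\eta$ (translation by $e/(2n)=2u$ along $e$) or by some $\nu_j$ to produce a transverse double intersection, again killing it as a systole; here one also uses that $i(a_j,e)=i(b_j,e)=i(a_j,b_j)=1$ so the surviving curves are genuinely simple and genuinely short.

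The $\rho_{f_j}$-invariant or $\rho_e$-invariant geodesics need separate care: a geodesic invariant under a reflection either equals the mirror, is orthogonal to it, or is disjoint from it. The curve $e$ itself is $\rho_e$-invariant but has length $e>a$ by hypothesis. Any other $\rho_e$-invariant geodesic must cross $e$ orthogonally an even number of times; if it crosses once (impossible by orthogonality parity) or twice, one estimates length from the two orthogeodesic arcs and the cross geometry to get something $>a$, and if it crosses $\geq 4$ times it is long. Likewise I would handle $\rho_{\text{seams}}$-invariant geodesics and those invariant under $\nu_j$. The remaining bookkeeping is to verify that $a_j$ and $b_j$ themselves are not eliminated — which is immediate since they realize the minimum, are simple, and by construction $\rho_{f_j}(a_j)=b_j$ and the translates $\eta^k(a_j)$ are again $a$-curves — and that no $a$-curve coincides with a $b$-curve (they have distinct intersection patterns with $e$ near a given seam), so the systole set is exactly $\{a_1,\dots,a_{2n},b_1,\dots,b_{2n}\}$.

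The main obstacle I expect is the four-holed-sphere case and, more generally, making the ``a geodesic that crosses few $f$-curves and few times is forced to be an $a$- or $b$-curve'' step airtight rather than a picture-drawing exercise: one must convert the topological statement (few transverse intersections with the symmetry walls) into an honest lower length bound using the explicit pentagon data $\cosh\sigma=\coth^2 t=\sinh^2 u$ and the right-triangle formula \eqref{eq:righttriangle}, and in particular verify that no ``diagonal'' geodesic through two crosses sneaks under the bound $a$. I would organize this as a short sequence of sublemmas: (i) geodesics in one cross are long; (ii) geodesics crossing exactly one $f$-curve exactly once are classified as $a$- or $b$-curves or are long; (iii) everything else has a translate or mirror image meeting it twice; and (iv) the $a$- and $b$-curves survive and are mutually distinct, with (ii) being the one requiring real trigonometry.
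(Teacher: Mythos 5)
Your proposal takes a genuinely different decomposition from the paper and, as written, leaves the hardest parts unproved. The paper organizes the elimination around the \emph{seams} rather than the $f$-curves: it first shows (via $\rho_{\text{seams}}$, $\rho_e$, $\rho_{f_j}$) that a systole $\gamma$ meets the seams exactly once, $e$ exactly once, and each $f_j$ at most once; then it cuts $R$ along the seams to get the annulus $A$, where $\gamma$ becomes an arc $\omega$ joining identified boundary points, and a purely combinatorial winding argument forces $\omega$ (hence $\gamma$) to be an $a$- or $b$-curve. Crucially, the annulus $A$ has only one simple closed geodesic, namely $e$, so the case ``$\gamma$ disjoint from the cut system'' is dispatched topologically with no length estimate at all.

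Your decomposition cuts along the $f$-curves instead, producing crosses (four-holed spheres) rather than an annulus, and this is where the gaps appear. Your sublemma (i) --- that every essential simple closed geodesic in the cross $C(t)$ has length $>4t$ --- is a genuine metric fact requiring a pentagon computation (one can check that the diagonal axes of symmetry $d$ of $C(t)$ satisfy $\cosh(\ell_d/4)=\sqrt 2\cosh t > \cosh t$, and the other separating type is longer still), but you assert it rather than prove it, and nothing like it is needed in the paper's route. Your sublemma (ii) is mis-stated: for $n\ge 2$ a closed curve cannot meet exactly one $f_j$ exactly once and no other $f$-curve (cutting $R$ along a single $f_j$ leaves it connected, and the resulting arc cannot close up inside a single cross while avoiding the remaining $f$-curves), so in fact the $a$- and $b$-curves cross \emph{all} $n$ of the $f$-curves once each. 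Finally, the classification step --- that a curve crossing each $f$-curve at most once, the seams once, and $e$ once must be an $a$- or $b$-curve --- is exactly where your outline says ``one checks'' but where the real argument (the annulus cut-and-wind) lives; you cannot defer this to ``real trigonometry'' because it is topological, not metric. So the route could be made to work, but you would need to (a) actually prove the shortest-curve estimate in the cross and (b) replace ``one checks'' with the concrete winding-number argument; the paper avoids (a) entirely by choosing the seam cut.
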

\begin{proof}
Let $\gamma$ be a systole of $R$. We claim that $\gamma$ intersects the seams, $e$, and each $f$-curve at most once. Otherwise, $\gamma$ and its image $\gamma^*$ by one of the reflections $\rho_{\text{seams}}$, $\rho_e$, or $\rho_{f_j}$ intersect at least twice. In that case $\gamma=\gamma^*$ by \lemref{lem:surgery}. We rule out the possibility that $\gamma$ coincides with $\rho_{\text{seams}}(\gamma)$, $\rho_e(\gamma)$, or $\rho_{f_j}(\gamma)$ one by one below, thereby proving the claim.

Suppose that $\rho_{\text{seams}}(\gamma) = \gamma$ and that $\gamma$ is disjoint from $e$. Then either $\gamma$ is a boundary component of $R$ in which case $\ell(\gamma) = 4t > a$ and $\gamma$ is not a systole, or else $\gamma$ intersects its shift $\eta(\gamma)$ twice transversely, contradicting \lemref{lem:surgery}. We conclude that $\gamma$ intersects $e$, and it does so at least twice by $\rho_\text{seams}$-symmetry. Therefore $\rho_e(\gamma)$ and $\gamma$ intersect at least twice as well so that they coincide. Then either $\gamma=e$ or $\gamma \perp e$. In the first case $\ell(\gamma)>a$ by hypothesis so that $\gamma$ is not a systole. In the second case $\gamma$ has to be equal to some $f$-curve, so that $\ell(\gamma)=4t > a$. We conclude that $\gamma$ intersects the seams at most once. Actually, $\gamma$ intersects the seams exactly once. Indeed, the complement of the seams is a topological annulus whose only simple closed geodesic is $e$, which is not a systole. Thus $\gamma$ cannot be disjoint from the seams.

Now suppose that $\gamma$ intersects $e$ at least twice so that $\rho_e(\gamma)=\gamma$. Since $\rho_e$ does not fix any point on the seams, the number of intersection points between $\gamma$ and the seams is even, which contradicts the previous paragraph. Therefore, $\gamma$ intersects $e$ at most once. In fact, $\gamma$ cannot be disjoint from $e$ either. This is because the seams disconnect $R \setminus e$, yet $\gamma$ intersects them only once. This shows that $\gamma$ intersects $e$ exactly once. 

Lastly, suppose that $\gamma$ intersects some $f_j$ at least twice so that $\rho_{f_j}(\gamma)=\gamma$. Since $\gamma$ cannot be equal to $f_j$, it is orthogonal to it. Moreover, $\gamma$ must intersect the seams and $e$ at one of the places where $f_j$ does, for otherwise there would be a second intersection point by $\rho_{f_j}$-symmetry. The only closed curve that is orthogonal to $f_j$ at one of these four points is $e$, which is too long. Hence $\gamma$ intersects each $f$-curve at most once. 

Now that the claim is proved, it is not hard to show that $\gamma$ is either an $a$-curve or a $b$-curve. If we cut $R$ along the seams, we get an annulus $A$. The curve $\gamma$ gets cut into an arc $\omega$ in $A$ joining a pair of points that get identified by the gluing pattern. The arc $\omega$ must join a point on the bottom boundary of $A$ to a point on the top boundary since it intersects $e$. Moreover, $\omega$ cannot wrap around $A$ more than once, for otherwise it would intersect some $f$-curve twice. Thus $\omega$ wraps exactly halfway around $A$ (remember, the seams are glued via a glide reflection along $e$ by distance $e/2$). It follows that $\gamma$ is homotopic to---hence equal to---one of the $a$-curves or $b$-curves. 
\end{proof}

\subsection{Transverse rings}

Since the crosses used to build the ring $R$ have diagonal symmetry, we can make two rings overlap along a shared cross. We call this configuration a \emph{pair of transverse rings}. We can think of one ring as being horizontal and the other vertical, as in \figref{fig:transverse_rings}. The $e$-curves in the two rings intersect twice, bisecting each other perpendicularly. There are four different ways to apply the surgery procedure from the proof of \lemref{lem:surgery} to this pair of curves, yielding four geodesics shorter than $e$ that we call \emph{$c$-curves}. One of them is depicted in \figref{fig:transverse_rings}. 

\begin{figure}[htp]
\centering
{\includegraphics{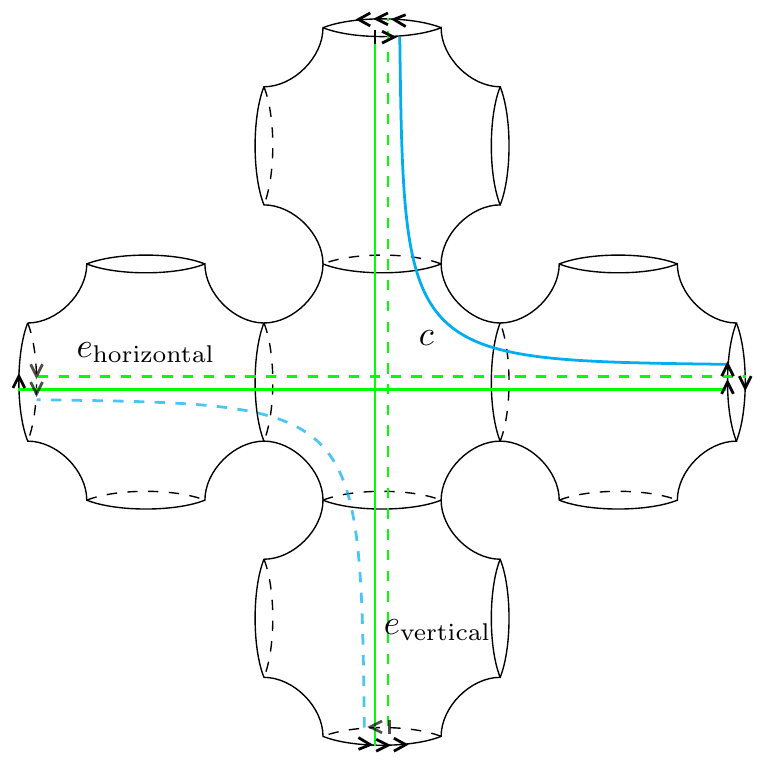}}
\caption{One of the four $c$-curves in a pair of transverse rings, obtained by surgery on the $e$-curves}\label{fig:transverse_rings}
\end{figure}

The four $c$-curves have equal length since they are related by symmetries. Furthermore, there is a right-angled pentagon with two adjacent sides of length $e/4$ and the opposite side of length $c/2$ (see \figref{fig:transverse_rings}). \eqnref{eq:pentsinh} gives the formula
\begin{equation} \label{eq:length_of_c}
\cosh(c/2)=\sinh^2(e/4)
\end{equation}
for the length of any $c$-curve.

When $n=1$ the pair of transverse rings is reduced to a single cross and there are actually only two $c$-curves because some surgeries on the $e$-curves coincide. In this case, each $c$-curve is equal to the union of two opposite seams and \eqnref{eq:length_of_c} is really the same as \eqnref{eq:sigma}. We will analyze this case more carefully in the next subsection.

The next step is to fix the parameter $t$ in such a way that the curves $a$, $b$ and $c$ all have the same length. A first useful observation is that $c$ is a decreasing function of $t$.

\begin{lem} \label{lem:c_decreases}
For every $n \geq 1$, the functions $e(t)$ and $c(t)$ are decreasing in $t$. 
\end{lem}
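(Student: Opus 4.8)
The plan is to read off both monotonicity statements directly from the explicit formulas for $e(t)$ and $c(t)$ established earlier. For $e(t)$, Equation~\eqref{eq:length_of_e} gives $e(t) = 4n\arcsinh(\coth t)$. Since $\coth t$ is strictly decreasing on $(0,\infty)$ and $\arcsinh$ is strictly increasing, the composition $t \mapsto 4n\arcsinh(\coth t)$ is strictly decreasing; differentiating, $e'(t) = 4n \cdot \frac{-\operatorname{csch}^2 t}{\sqrt{1+\coth^2 t}} < 0$ for all $t>0$. This disposes of the first half of the lemma with no real work.

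For $c(t)$, I would combine \eqnref{eq:length_of_c}, namely $\cosh(c/2) = \sinh^2(e/4)$, with the monotonicity of $e$ just proved. As $t$ increases, $e(t)$ decreases, hence $e(t)/4$ decreases, hence $\sinh^2(e(t)/4)$ decreases (as $\sinh$ is increasing and nonnegative on $[0,\infty)$, and $e/4 > 0$), hence $\cosh(c(t)/2)$ decreases. Since $\cosh$ is strictly increasing on $[0,\infty)$ and $c(t)/2 > 0$, it follows that $c(t)/2$ decreases, so $c(t)$ is strictly decreasing in $t$. In short, $c$ is a strictly decreasing function composed (through increasing functions) with the decreasing function $e$, so the sign of $c'(t)$ matches that of $e'(t)$, which is negative.

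The one point that deserves a sentence of care is that the formula $\cosh(c/2) = \sinh^2(e/4)$ genuinely defines $c$ as a function of $t$ via $e(t)$: one should note that $\sinh^2(e(t)/4) \geq 1$ is needed for a real solution $c \geq 0$ to exist, i.e. $e(t)/4 \geq \arcsinh 1$, equivalently $\coth t \geq 1$, which holds automatically for all $t > 0$. So the $c$-curve exists for every $t>0$ and $n\geq1$, and the chain of monotonic dependencies is valid on the whole domain. I do not anticipate any genuine obstacle here; the only mild subtlety is keeping track of which auxiliary functions are increasing versus decreasing so that the composition comes out decreasing, and confirming the domain condition for the defining equation of $c$.
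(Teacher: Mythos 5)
Your argument is correct and is essentially the paper's proof: monotonicity of $e$ from \eqref{eq:length_of_e} via $\coth$ decreasing and $\arcsinh$ increasing, then monotonicity of $c$ from \eqref{eq:length_of_c} as a composition of monotone functions with $e$. The extra sentence verifying that $\sinh^2(e/4)\geq 1$ (so that $c$ is well-defined) is a nice added check not explicitly present in the paper, though your ``equivalently $\coth t\geq1$'' is a sufficient rather than strictly equivalent restatement of the needed inequality $n\arcsinh(\coth t)\geq\arcsinh 1$.
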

\begin{proof}
Recall that $e(t)=4n \arcsinh(\coth(t))$. Since $\coth$ is decreasing and $\arcsinh$ is increasing, $e$ is decreasing. Therefore $c(t) = 2 \arccosh(\sinh^2(e(t)/4))$ is decreasing as well, being the composition of a decreasing function with an increasing one.
\end{proof}

We use this to prove the existence and uniqueness of a parameter $t_n$ such that the curves $a$, $b$ and $c$ in the pair of transverse rings all have the same length. 

\begin{lem} \label{lem:existenceanduniqueness}
For every $n \geq 1$, there exists a unique $t_n>0$ such that $a(t_n)=c(t_n)$. 
\end{lem}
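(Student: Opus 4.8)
The plan is to recast the equation $a(t)=c(t)$ as the vanishing of a single continuous function and apply the intermediate value theorem together with a monotonicity argument. Define $\phi(t) = a(t) - c(t)$ on $(0,\infty)$. By Equations \eqref{eq:length_of_e}, \eqref{eq:length_of_a} and \eqref{eq:length_of_c}, both $a(t)$ and $c(t)$ are explicit smooth functions of $t$, so $\phi$ is continuous (indeed smooth). The strategy is: (i) show $\phi(t) < 0$ for $t$ small; (ii) show $\phi(t) > 0$ for $t$ large; (iii) conclude existence by continuity; (iv) establish uniqueness via strict monotonicity of $\phi$.

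For the boundary behaviour, first analyze $t \to 0^+$. Here $\coth t \to \infty$, so $e(t) = 4n\arcsinh(\coth t) \to \infty$, hence by \eqref{eq:length_of_a} we get $\cosh(a/2) = \cosh t \cosh(e/4) \to \infty$, i.e. $a(t)\to\infty$; and by \eqref{eq:length_of_c}, $\cosh(c/2) = \sinh^2(e/4) \to \infty$, so $c(t)\to\infty$ as well. So I need to compare growth rates: as $e\to\infty$, $\cosh(e/4)\sim \tfrac12 e^{e/4}$ gives $a \sim e/2 + 2\log\cosh t + O(1)$, whereas $\sinh^2(e/4)\sim \tfrac14 e^{e/2}$ gives $c \sim e + O(1)$. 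Since $\cosh t \to 1$ as $t\to 0$, the term $2\log\cosh t$ stays bounded, so $c(t) - a(t) \sim e(t)/2 \to +\infty$; thus $\phi(t) = a(t)-c(t) \to -\infty$, and in particular $\phi(t) < 0$ for all sufficiently small $t$. For $t\to\infty$: $\coth t \to 1$, so $e(t)\to 4n\arcsinh(1) = 4n\log(1+\sqrt2)$, a finite positive constant; hence $c(t)$ tends to the finite value $2\arccosh(\sinh^2(n\log(1+\sqrt2)))$, while $a(t) \to \infty$ because $\cosh(a/2) = \cosh t\cosh(e/4)$ and $\cosh t\to\infty$ while $\cosh(e/4)$ stays bounded below by a positive constant. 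Therefore $\phi(t)\to+\infty$, so $\phi(t) > 0$ for $t$ large. By the intermediate value theorem there exists $t_n$ with $\phi(t_n)=0$, i.e. $a(t_n)=c(t_n)$.

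For uniqueness, the cleanest route is monotonicity of $\phi$. By \lemref{lem:c_decreases}, $c(t)$ is decreasing in $t$, so $-c(t)$ is increasing. It therefore suffices to show that $a(t)$ is increasing (or at least non-decreasing) in $t$; then $\phi = a - c$ is strictly increasing and can vanish at most once. To see $a$ is increasing: from \eqref{eq:length_of_a}, $\cosh(a/2) = \cosh(t)\cosh(e(t)/4)$. Unlike $c$, here the two factors pull in opposite directions — $\cosh t$ increases while $\cosh(e(t)/4)$ decreases — so this needs a genuine computation. Differentiate: $\tfrac{d}{dt}\log\cosh(a/2) = \tanh t + \tfrac14 e'(t)\tanh(e/4)$, with $e'(t) = 4n\cdot\tfrac{d}{dt}\arcsinh(\coth t) = 4n\cdot\frac{-\operatorname{csch}^2 t}{\sqrt{1+\coth^2 t}}$. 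One must check this is $\geq 0$; I would substitute $\coth t$ and $\operatorname{csch}^2 t = \coth^2 t - 1$ and bound $\tanh(e/4) < 1$ to reduce to an elementary inequality in $t$.

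I expect \textbf{step (iv), the monotonicity of $a(t)$, to be the main obstacle}, since it is the only place where the two competing effects do not obviously resolve and a real estimate is required; the limiting behaviours in steps (i)–(iii) are routine asymptotics. If $a(t)$ turned out not to be monotone, uniqueness could instead be salvaged by showing $\phi$ has positive derivative at every zero (so zeros are isolated and, being a smooth function going from $-\infty$ to $+\infty$, there is exactly one), but I anticipate the direct monotonicity argument will go through.
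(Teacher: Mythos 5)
Your existence argument via the intermediate value theorem is correct: as $t\to 0^+$ one has $e(t)\to\infty$ and the asymptotics you give show $c(t)-a(t)\sim e(t)/2\to+\infty$, while as $t\to\infty$ one has $e(t)\to 4n\arcsinh(1)$ so $c(t)$ is bounded and $a(t)\to\infty$. Your uniqueness step, however, does not go through as stated: the function $a(t)$ is \emph{not} monotone on $(0,\infty)$. Indeed $a(t)\to\infty$ at both ends (it blows up as $t\to 0^+$ because $e(t)$ does, and as $t\to\infty$ because $\cosh t$ does), so $a$ has an interior minimum. The paper says this explicitly just before Lemma~\ref{lem:a_increases}: ``The function $a$ is not monotone but we can show it is increasing at $t_n$.'' Your own computation already hints at the failure --- near $t=0$ the term $\tfrac14 e'(t)\tanh(e/4)$ is of order $-n/t$ and overwhelms $\tanh t$, so $a'(t)<0$ for small $t$.

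The paper sidesteps this entirely with a small but decisive algebraic trick: instead of comparing $a$ and $c$ directly, divide the identity $\cosh(a/2)=\cosh(t)\cosh(e/4)$ and $\cosh(c/2)=\sinh^2(e/4)$ through by $\cosh(e/4)$. The equation $a(t)=c(t)$ then becomes
\[
\cosh t = \tanh\bigl(e(t)/4\bigr)\,\sinh\bigl(e(t)/4\bigr),
\]
whose left side is strictly increasing and diverges as $t\to\infty$, while the right side is strictly decreasing (product of two positive decreasing functions, since $e(t)$ decreases) and diverges as $t\to 0^+$. Existence and uniqueness are then immediate. This is genuinely cleaner than your route because it turns a comparison between two non-monotone quantities into a comparison between two monotone ones. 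Your proposed fallback (checking $\phi'>0$ at every zero) is viable in principle and is essentially what the paper's Lemma~\ref{lem:a_increases} accomplishes at $t_n$, but that lemma uses the defining equation of $t_n$ in its estimates and is only proved for $n\geq 2$; making it self-contained for the uniqueness argument would require more work than the reformulation above.
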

\begin{proof}
We have
\[ \frac{\cosh(a(t)/2)}{\cosh(e(t)/4)} = \cosh(t) \quad \text{and} \quad \frac{\cosh(c(t)/2)}{\cosh(e(t)/4)} = \frac{\sinh^2(e(t)/4)}{\cosh(e(t)/4)} = \tanh(e(t)/4) \sinh(e(t)/4) \]
by Equations \eqref{eq:length_of_a} and \eqref{eq:length_of_c}. Therefore, the equation $a(t)=c(t)$ is equivalent to
\begin{equation} \label{eq:equivalence}
\cosh(t) = \tanh(e(t)/4) \sinh(e(t)/4).
\end{equation}
The left-hand side of \eqref{eq:equivalence} is an increasing function of $t$ which diverges as $t \to \infty$. The right-hand side is decreasing in $t$ since it is the product of two positive decreasing functions. Moreover, it diverges as $t \to 0$ since $e(t)$ does. The existence and uniqueness of $t_n$ follows. 
\end{proof}

From now on, we will only work with the rings $R(n,t_n)$ with $t_n$ as in \lemref{lem:existenceanduniqueness}. In order to determine the systoles in that ring, we need to check that the hypotheses of \propref{prop:systoles_in_ring} are satisfied, but this is only true when $n \geq 2$. The case $n=1$ is treated separately in the next subsection.

\begin{figure}[htp]
\centering
{\includegraphics[scale=.6]{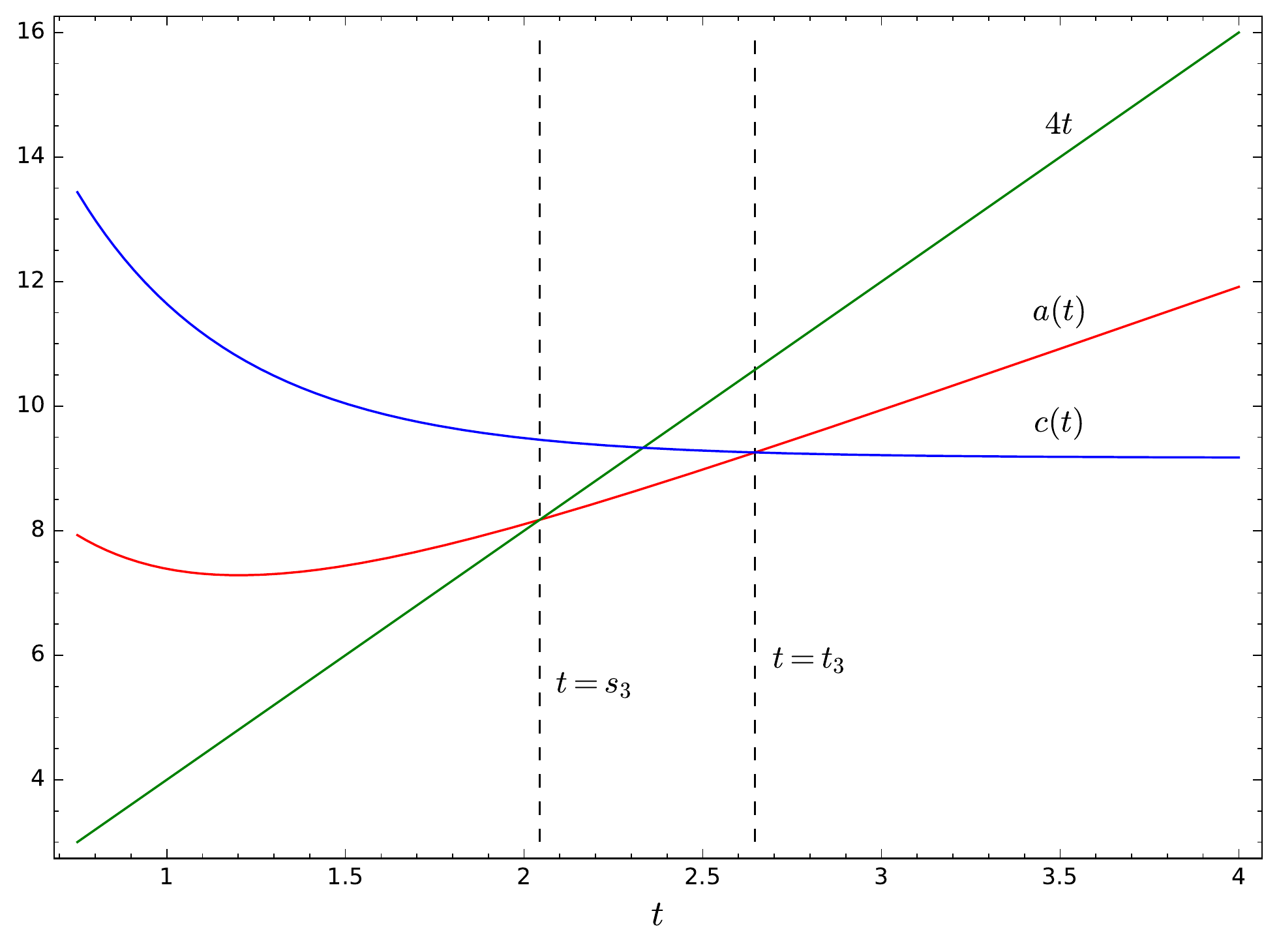}}
\caption{A plot of the functions $a(t)$, $c(t)$ and $4t$ for $n=3$}\label{fig:plot}
\end{figure}

\begin{lem} \label{lem:a_shorter_than_f}
We have $a(t_n) < 4 t_n$ and $a(t_n) < e(t_n)$ for every $n \geq 2$.
\end{lem}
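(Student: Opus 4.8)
The plan is to turn both inequalities into polynomial inequalities in the single quantity $p := \cosh\!\big(e(t_n)/4\big)$ and to reduce the entire lemma to the crude estimate $p > 3$, which is where the hypothesis $n \ge 2$ enters. A preliminary simplification: using $\tanh x\,\sinh x = \sinh^2 x/\cosh x = \cosh x - \sech x$, the defining relation \eqref{eq:equivalence} for $t_n$ becomes
\[
\cosh t_n = p - \frac1p .
\]

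Next I would prove $p > 3$ for $n \ge 2$. By \eqref{eq:length_of_e}, $e(t_n)/4 = n\arcsinh(\coth t_n)$. Since $t_n > 0$ we have $\coth t_n > 1$, hence $\arcsinh(\coth t_n) > \arcsinh 1$, and with $n \ge 2$ this gives $e(t_n)/4 > 2\arcsinh 1$. As $\cosh$ is increasing on $[0,\infty)$ and $\cosh(2\arcsinh 1) = 1 + 2\sinh^2(\arcsinh 1) = 3$, we conclude $p = \cosh(e(t_n)/4) > 3$. (For $n = 1$ one finds instead $p = \sqrt{2+\sqrt 2}$, the borderline value below, corresponding to $a(t_1) = 4t_1$; this is why $n \ge 2$ is needed.)

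With this in hand, both inequalities follow from \eqref{eq:length_of_a}, which reads $\cosh(a(t_n)/2) = p\cosh t_n$. For $a(t_n) < e(t_n)$: since $\cosh$ is increasing on $[0,\infty)$ this is equivalent to $p\cosh t_n < \cosh(e(t_n)/2) = 2p^2 - 1$, i.e.\ $\cosh t_n < 2p - 1/p$ after dividing by $p > 0$; substituting $\cosh t_n = p - 1/p$ leaves $0 < p$, which holds (in fact for all $n \ge 1$). For $a(t_n) < 4t_n$: likewise this is equivalent to $p\cosh t_n < \cosh(2t_n) = 2\cosh^2 t_n - 1$, i.e.\ $p < 2\cosh t_n - 1/\cosh t_n$; substituting $\cosh t_n = (p^2-1)/p$ and clearing the positive denominators $p$ and $p^2 - 1$ reduces this to
\[
p^4 - 4p^2 + 2 > 0 ,
\]
which holds because $p > 3 > 2$ forces $p^4 - 4p^2 + 2 = p^2(p^2 - 4) + 2 > 0$.

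The only genuine content is the estimate $p > 3$, so I do not expect a real obstacle; the remaining care is in checking that the arguments of $\cosh$ being compared are nonnegative (so that monotonicity applies) and that $p > 1$ (so that multiplying through by $p^2 - 1$ preserves the direction of the inequality).
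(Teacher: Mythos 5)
Your proof is correct, and it takes a genuinely different and more streamlined route than the paper's. Expressing everything in $p = \cosh(e(t_n)/4)$ is a clean idea: the defining relation $\cosh t_n = \tanh(e/4)\sinh(e/4)$ collapses to $\cosh t_n = p - 1/p$, and then $\cosh(a(t_n)/2) = p\cosh t_n = p^2 - 1$, so $a(t_n) < e(t_n)$ becomes the trivial $p^2 - 1 < 2p^2 - 1$, while $a(t_n) < 4t_n$ becomes $p^4 - 4p^2 + 2 > 0$, i.e.\ $p^2 > 2 + \sqrt 2$, settled by your crude bound $p > 3$ from $n\geq 2$. I verified the algebra and the threshold: at $n=1$ one has exactly $p^2 = 2+\sqrt 2$, consistent with $a(t_1)=4t_1$. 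The paper instead introduces the auxiliary time $s_n$ where $a(s_n) = 4s_n$, proves $c(s_n) > 4s_n$ to conclude $s_n < t_n$, and uses monotonicity of $\cosh(a(t)/2)/\cosh(2t)$ to transport the inequality from $s_n$ to $t_n$; this is indirect but yields a byproduct, namely $s_n > 1$ and hence $t_n > 1$, which the paper reuses in \lemref{lem:a_increases} and again in Section~\ref{sec:counting}. Your argument delivers that byproduct just as easily ($\cosh t_n = p - 1/p > 8/3 > \cosh 1$), but you should note it explicitly if this proof were to replace the paper's, since later proofs cite it.
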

\begin{proof}
The inequality $a(t_n)=c(t_n)<e(t_n)$ follows from the fact that $c$ is obtained by surgery on two $e$-curves, or can be deduced from \eqnref{eq:length_of_c}.

To show that $a(t_n)<4t_n$ we consider the time $s_n>0$ such that $a(s_n)=4s_n$ and prove that $c(s_n) > 4 s_n$. This implies that $s_n < t_n$ since $c$ is decreasing whereas $a(t)$ diverges as $t \to \infty$. The inequality $a(t_n) < 4 t_n$ then follows from the fact that
\[
\frac{\cosh(a(t)/2)}{\cosh(2t)} = \frac{\cosh(t)}{\cosh(2t)} \cdot \cosh(e(t)/4)
\] 
is decreasing, being the product of two positive decreasing functions. \figref{fig:plot} illustrates this phenomenon for $n=3$.

Hence let $s_n>0$ be the unique parameter such that $a(s_n)=4s_n$. Then 
\[\cosh(s_n)\cosh(e(s_n)/4) = \cosh(a(s_n)/2) = \cosh(2s_n) = 2 \cosh^2(s_n) - 1\]
and 
\begin{align*}
\cosh(c(s_n)/2) = \sinh^2(e(s_n)/4) &= \cosh^2(e(s_n)/4) - 1\\
&= \left( \frac{2 \cosh^2(s_n) - 1}{\cosh(s_n)} \right)^2 - 1.
\end{align*}
Let $x = \cosh^2(s_n)$ so that $\cosh(2s_n) = 2x - 1$ and
\[
\cosh(c(s_n)/2) = \frac{(2x-1)^2}{x} - 1.
\]
The inequality we want to prove is $\cosh(c(s_n)/2) > \cosh(2s_n)$, which is equivalent to $(2x-1)^2 > 2x^2$ or $x > 1 + \frac{1}{\sqrt{2}}$ after simplification. Therefore, it suffices to show that 
\[s_n > \arccosh\left(\sqrt{1 + \frac{1}{\sqrt{2}}}\right) \approx 0.764.\]

But at $t=1$ we get
\begin{align*}
\frac{\cosh(a(1)/2)}{\cosh(2\cdot 1)}&  = \frac{\cosh(1)}{\cosh(2\cdot 1)} \cdot \cosh(e(1)/4) \\
 &= \frac{\cosh(1)}{\cosh(2)} \cdot \cosh(n \arcsinh(\coth(1)))\\
 & \geq \frac{\cosh(1)}{\cosh(2)} \cdot \cosh(2 \cdot 1.086) > \cosh(1) > 1
\end{align*}
which implies that $s_n>1$ and finishes the proof.
\end{proof}

We conclude that the systoles in the ring $R(n,t_n)$ are the $a$-curves and the $b$-curves.

\begin{cor} \label{cor:systoles_fixed_t}
For every $n\geq 2$, the systoles in $R(n,t_n)$ are the $a$-curves and the $b$-curves.
\end{cor}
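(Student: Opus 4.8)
The plan is to simply combine the two preceding results of this subsection. Proposition~\ref{prop:systoles_in_ring} asserts that, whenever the two numerical hypotheses $a(t) < 4t$ and $a(t) < e(t)$ hold, the systoles of $R(n,t)$ are exactly the $a$-curves and the $b$-curves. Lemma~\ref{lem:a_shorter_than_f} establishes precisely these two inequalities at the distinguished parameter $t = t_n$ for every $n \geq 2$. Hence I would deduce the corollary by applying Proposition~\ref{prop:systoles_in_ring} with $t = t_n$.

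In more detail, the steps are: first, recall from Lemma~\ref{lem:existenceanduniqueness} that $t_n$ is well-defined, namely the unique $t>0$ with $a(t) = c(t)$; second, invoke Lemma~\ref{lem:a_shorter_than_f} to obtain $a(t_n) < 4 t_n$ and $a(t_n) < e(t_n)$; third, feed these two inequalities into Proposition~\ref{prop:systoles_in_ring} and read off the conclusion. I do not expect any genuine obstacle here: this corollary is a bookkeeping statement that repackages work already carried out. The only point requiring a word of care is the restriction $n \geq 2$. Lemma~\ref{lem:a_shorter_than_f}, and therefore the corollary, fails when $n = 1$, where one instead has $a(t_1) = 4t_1$ and the ring degenerates to a single cross; that case will be handled separately in the next subsection, so it is correctly omitted from the present statement.
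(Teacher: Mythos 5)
Your proposal is correct and matches the paper's own proof exactly: both deduce the corollary by applying Proposition~\ref{prop:systoles_in_ring} at $t=t_n$, with the hypotheses $a(t_n)<4t_n$ and $a(t_n)<e(t_n)$ supplied by Lemma~\ref{lem:a_shorter_than_f}. The extra remark about $n=1$ is accurate and consistent with the paper's treatment of the Bolza surface in the following subsection.
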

\begin{proof}
This follows from \lemref{lem:a_shorter_than_f} and \propref{prop:systoles_in_ring}.
\end{proof}

We observed earlier that $c$ is a decreasing function of $t$. The function $a$ is not monotone but we can show it is increasing at $t_n$. These two facts will play a key role in \secref{sec:minimality}.

\begin{lem} \label{lem:a_increases}
We have $a'(t_n) > 0$ for every $n \geq 2$.
\end{lem}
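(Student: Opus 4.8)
The plan is to differentiate the defining relation $a(t_n)=c(t_n)$ together with the explicit formulas for $a$, $c$ and $e$, and use the already-established monotonicity facts to pin down the sign of $a'(t_n)$. First I would recall from \lemref{lem:existenceanduniqueness} that at $t=t_n$ the equation
\[
\cosh(t) = \tanh(e(t)/4)\sinh(e(t)/4) =: \phi(t)
\]
holds, where $\phi(t)$ is a strictly decreasing function of $t$ (product of two positive decreasing functions, as noted in the proof of \lemref{lem:existenceanduniqueness}). Since the left side $\cosh(t)$ is strictly increasing and the right side $\phi(t)$ is strictly decreasing, the graphs cross transversally at $t_n$; this transversality is what will make the argument go through. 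The key auxiliary quantity is $\cosh(e(t)/4)$, which by \eqnref{eq:length_of_e} equals $\cosh(n\arcsinh(\coth t))$ and is a strictly \emph{decreasing} function of $t$ by \lemref{lem:c_decreases} (indeed $e(t)$ is decreasing, hence so is $\cosh(e(t)/4)$).

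The main step is then to write $a$ via \eqnref{eq:length_of_a} in the form
\[
\cosh(a(t)/2) = \cosh(t)\cosh(e(t)/4),
\]
substitute the relation $\cosh(t_n) = \phi(t_n)$ to re-express the right-hand side purely in terms of $e(t_n)/4$, and compare with the corresponding expression for $c$ via \eqnref{eq:length_of_c}. Concretely, at $t=t_n$ one gets $\cosh(a(t_n)/2) = \phi(t_n)\cosh(e(t_n)/4) = \sinh^2(e(t_n)/4) = \cosh(c(t_n)/2)$, recovering the equality $a(t_n)=c(t_n)$ as a consistency check. To get the derivative I would differentiate $\cosh(a(t)/2)=\cosh(t)\cosh(e(t)/4)$ at $t_n$:
\[
\tfrac12 \sinh(a(t_n)/2)\, a'(t_n) = \sinh(t_n)\cosh(e(t_n)/4) + \cosh(t_n)\cdot \tfrac14 \sinh(e(t_n)/4)\, e'(t_n).
\]
The first term on the right is positive. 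The second term is negative since $e'(t_n)<0$. So I need to show the positive term dominates. The cleanest route is to instead differentiate the ratio $\cosh(a(t)/2)/\cosh(c(t)/2)$ and evaluate at $t_n$ where it equals $1$: since $c$ is strictly decreasing (so $\cosh(c(t)/2)$ is strictly decreasing), it suffices to show that $\cosh(a(t)/2)$ is \emph{not} decreasing at $t_n$, or more precisely that $(\log\cosh(a/2))' > (\log\cosh(c/2))'$ at $t_n$. Using $\log\cosh(a(t)/2) = \log\cosh(t) + \log\cosh(e(t)/4)$ and $\log\cosh(c(t)/2) = 2\log\sinh(e(t)/4)$, this inequality becomes, after subtracting the common $e$-dependent pieces and using $e'(t_n)<0$, an inequality comparing $\tanh(t_n)$ (the log-derivative of $\cosh t$) against a negative multiple of $e'(t_n)$ times elementary hyperbolic factors evaluated at $e(t_n)/4$; the relation $\cosh(t_n)=\tanh(e(t_n)/4)\sinh(e(t_n)/4)$ should let me eliminate $e(t_n)$ entirely and reduce to a one-variable inequality in $x=\cosh^2(t_n)$ analogous to the $x>1+\tfrac{1}{\sqrt2}$ reduction appearing in the proof of \lemref{lem:a_shorter_than_f}.

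I expect the main obstacle to be exactly this final reduction: after eliminating $e$, the inequality $a'(t_n)>0$ becomes a polynomial (or rational) inequality in $x=\cosh^2(t_n)$, and I will need a lower bound on $t_n$ to certify it. Fortunately such a bound is already available: the computation at the end of \lemref{lem:a_shorter_than_f}'s proof shows $t_n > 1$ for all $n\ge 2$, which gives $x=\cosh^2(t_n) > \cosh^2(1) \approx 2.38$, and this should be comfortably enough to push the rational inequality through (the threshold values that appear in these estimates, such as $1+\tfrac1{\sqrt2}\approx 1.707$, are all well below $\cosh^2(1)$). So the argument has two moving parts: (i) an exact differentiation-and-elimination computation turning $a'(t_n)>0$ into a clean inequality in $x$, and (ii) invoking $t_n>1$ to verify that inequality. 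Part (i) is routine but bookkeeping-heavy; part (ii) is immediate given the earlier lemma. If the elimination in (i) turns out messier than expected, an alternative is to bound $e(t_n)$ both above and below using $t_n>1$ and the monotonicity of $e$, and then estimate each term in the differentiated identity directly — less elegant, but robust.
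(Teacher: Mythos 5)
Your starting point is the same as the paper's: differentiate $\cosh(a(t)/2)=\cosh(t)\cosh(e(t)/4)$ and analyze the sign of the right-hand side. But there are two genuine gaps in how you then propose to close the argument.

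First, the ``cleaner route'' via comparing $(\log\cosh(a/2))'$ with $(\log\cosh(c/2))'$ at $t_n$ proves strictly less than what is needed. Since $\cosh(a(t_n)/2)=\cosh(c(t_n)/2)$ and $\sinh(a(t_n)/2)=\sinh(c(t_n)/2)$, the inequality $(\log\cosh(a/2))'>(\log\cosh(c/2))'$ at $t_n$ is equivalent to $a'(t_n)>c'(t_n)$. But $c'(t_n)<0$ by \lemref{lem:c_decreases}, so this does \emph{not} yield $a'(t_n)>0$. Showing the ratio $\cosh(a/2)/\cosh(c/2)$ is increasing only recovers the uniqueness argument in \lemref{lem:existenceanduniqueness}; it says nothing about the sign of $a'$ itself.

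Second, the bound $t_n>1$ is not enough to finish, contrary to your claim that ``part (ii) is immediate.'' After differentiating, the inequality $a'(t_n)>0$ reduces to something of the form $n < F(t_n)$ (in the paper, $n<\sqrt{2}\,\tanh(t_n)\sinh^2(t_n)$ after the intermediate bound $\cosh(e/4)>\sinh(e/4)$), where the left-hand side is $n$ and the right-hand side depends only on $t_n$. Plugging $t_n=1$ gives a fixed constant roughly $1.7$, which already fails for $n=2$. The reason the argument works is that $t_n$ grows roughly linearly in $n$, and certifying this requires using the defining relation $\cosh(t_n)=\tanh(e(t_n)/4)\sinh(e(t_n)/4)$ together with the lower bound $e(t_n)/4>n\arcsinh(1)$; the paper then shows $\sqrt{2}\tanh(x)\sinh^2(x)>0.963\cosh(x)$ for $x\geq 1$ and $0.963\,\tanh(n\lambda)\sinh(n\lambda)\geq n$ for $n\geq 2$. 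So the $n$-dependence cannot be eliminated to leave a clean one-variable polynomial inequality in $x=\cosh^2(t_n)$ verifiable from $t_n>1$ alone; rather, the defining relation for $t_n$ must be invoked in an essential, $n$-tracking way. Your ``less elegant but robust'' fallback of bounding $e(t_n)$ above and below is the right instinct, but would still need this careful bookkeeping of the $n$-dependence.
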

\begin{proof}
From $\cosh(a(t)/2)=\cosh(t)\cosh(e(t)/4)$ we compute
\begin{align*}
\sinh(a(t)/2)\, a'(t)/2 &= \sinh(t)\cosh(e(t)/4) +\cosh(t) \sinh(e(t)/4)\, e'(t)/4 \\
&> \sinh(e(t)/4) \left[ \sinh(t) + \cosh(t)\,e'(t)/4 \right].
\end{align*}
Thus it suffices to show that $-e'(t_n)/4 < \tanh(t_n)$. Since $e(t)/4 = n \arcsinh(\coth t)$ we get
\[
-e'(t)/4 = \frac{n}{\sinh^2(t)\sqrt{\coth^2(t)+1}} < \frac{n}{\sinh^2(t)\,\sqrt{2}}
\]
so that the required inequality becomes $n < \sqrt{2}\,\tanh(t_n)\sinh^2(t_n)$. 

We know that $t_n>1$ from the proof of \lemref{lem:a_shorter_than_f}. Furthermore,  one can show that
\[ \sqrt{2}\, \tanh(x)\sinh^2(x) > 0.963 \cdot \cosh(x) \] 
for every $x \geq 1$. Indeed, $\sinh^3(x)/\cosh^2(x)$ is increasing and the inequality can be verified numerically at $x=1$. Recall that $\cosh(t_n) = \tanh(e(t_n)/4)\sinh(e(t_n)/4)$ by definition of $t_n$. We thus obtain
\begin{align*}
\sqrt{2}\, \tanh(t_n)\sinh^2(t_n) &> 0.963 \cdot \cosh(t_n) \\
& = 0.963 \cdot \tanh(e(t_n)/4)\sinh(e(t_n)/4) \\
& > 0.963 \cdot \tanh(n \lambda )\sinh(n \lambda) \geq n
\end{align*}
for every $n\geq 2$, where $\lambda = \arcsinh(1)$. The last inequality holds because the function $\tanh(\lambda x) \sinh(\lambda x)/x$ is increasing in $x$ and larger than $1/0.963$ at $x=2$. This implies the desired result.
\end{proof}

In addition to knowing the systoles in the ring, we also need an estimate on the lengths of arcs that enter and exit the ring from a given cross. Since the arcs going vertically across any cross $C_j \subset R$ are fairly short, we need to exclude them. 

\begin{lem} \label{lem:arcs_in_ring}
Let $n \geq 2$. Any non-trivial arc in $R(n,t_n)$ from one boundary component to itself is longer than $a(t_n)/2$. Any geodesic arc that joins the top and bottom of a cross $C_j \subset R(n,t_n)$ but is not contained in $C_j$ is longer than $a(t_n)/2$. 
\end{lem}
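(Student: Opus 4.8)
The plan is to translate both statements into lower bounds on distances between certain pairs of boundary components of a single cross $C=C(t_n)$, and then upgrade these to the ring by a covering/unwrapping argument. First I would handle arcs with both endpoints on a single boundary component $\gamma$ of $R$. Such an arc $\omega$, together with a subarc of $\gamma$, forms an essential closed curve (it is essential because $R$ is hyperbolic with geodesic boundary and a trivial or boundary-parallel loop would force $\omega$ to be null-homotopic rel $\gamma$). Doubling $R$ across its boundary, the geodesic representative of that closed curve is a systole candidate in the double, or more directly: the curve $\delta=\omega\cup(\text{half of }\gamma)$ has a geodesic representative $\gamma'$ with $\ell(\gamma')\le \ell(\omega)+\ell(\gamma)/2$; if $\ell(\omega)\le a(t_n)/2$ we would get a closed geodesic in $R$ of length at most $a(t_n)/2 + 2t_n < a(t_n)$ using $a(t_n)<4t_n$ from \lemref{lem:a_shorter_than_f}, contradicting \corref{cor:systoles_fixed_t}. (One must make sure $\gamma'$ is a genuine closed geodesic in $R$ and not a power of $e$ or a boundary curve; the intersection pattern of $\omega$ with the seams and $e$, as in the proof of \propref{prop:systoles_in_ring}, rules this out, or one simply notes $\ell(\gamma')<4t_n=\ell(f_j)$.) So the first half follows quickly from the systole computation already in hand.

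For the second statement, let $\omega$ be a geodesic arc joining the top boundary of $C_j$ to its bottom boundary but not contained in $C_j$. The key structural input is the reflection symmetries $\rho_e$, $\rho_{f_j}$, and $\rho_{\text{seams}}$ of $R$, together with \lemref{lem:surgery}. Since $\omega$ leaves $C_j$, it must cross at least one $f$-curve, say $f_j$ or $f_{j+1}$ (the two boundaries of $C_j$ inside $R$); consider its reflection $\omega^*=\rho_{f_j}(\omega)$ (or $\rho_{f_{j+1}}$). The union $\omega\cup\omega^*$ is a closed curve (the endpoints on $f_j$ match up after reflection, and the top/bottom endpoints also match since $\rho_{f_j}$ preserves $e$ and hence the top and bottom of each cross, or swaps them — either way one can close up, possibly after also applying $\rho_e$). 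Its geodesic representative has length at most $2\ell(\omega)$, so if $\ell(\omega)\le a(t_n)/2$ we would obtain a closed geodesic of length at most $a(t_n)$. I would then argue this geodesic is essential and not a boundary curve nor a power of $e$: essentiality because $\omega$ is not contained in $C_j$ so $\omega\cup\omega^*$ cannot bound a disk, and it cannot be $e$ or an $f$-curve because it crosses $e$ transversally (it goes top to bottom). Hence it is a systole, forcing it to be an $a$- or $b$-curve by \corref{cor:systoles_fixed_t}; but an $a$- or $b$-curve meets each $f$-curve at most once and lies essentially in the way the $a$-curves do, whereas the symmetric curve $\omega\cup\omega^*$ meets $f_j$ twice (once for $\omega$, once for $\omega^*$) unless $\omega$ met $f_j$ only at a single point where it is orthogonal — and an orthogonal arc reflected gives a smooth closed geodesic meeting $f_j$ once. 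This last subcase needs separate treatment: if $\omega\perp f_j$ at a single crossing, then $\omega\cup\omega^*$ is a closed geodesic meeting $f_j$ once and $e$ transversally, so it is an $a$- or $b$-curve up to the symmetries; but then $\omega$ itself would be half of an $a$- or $b$-curve, which is a vertical arc of a single cross, contradicting the hypothesis that $\omega\not\subset C_j$.

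The main obstacle I expect is the careful bookkeeping in the second part: making sure that the reflected-and-doubled arc really closes up to a \emph{smooth} closed curve of controlled length (the corner at the reflection point contributes nothing only if the arc meets $f_j$ orthogonally, otherwise one picks up a definite angle but the surgery Lemma~\ref{lem:surgery}-style estimate still gives strict shortening), and systematically excluding the degenerate cases where the resulting geodesic is $e$, an $f$-curve, a boundary component, or literally an $a$/$b$-curve whose half is a vertical cross arc. A clean way to organize this is: assume for contradiction $\ell(\omega)\le a(t_n)/2$; produce from $\omega$ and one or two of the reflections a closed geodesic $\gamma$ with $\ell(\gamma)< a(t_n)$ or $\ell(\gamma)\le a(t_n)$ with equality only in an explicitly describable configuration; invoke \corref{cor:systoles_fixed_t} and the intersection properties of $a$- and $b$-curves established in \propref{prop:systoles_in_ring} to see that configuration forces $\omega\subset C_j$; contradiction. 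The trigonometric inequality $a(t_n)<4t_n$ is the only quantitative fact needed beyond the symmetry arguments, and it is already available.
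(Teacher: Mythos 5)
Both pivotal steps in your sketch fail, for different reasons, and as a result you never reach the genuinely hard case that drives the paper's proof.

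\textbf{The first part.} You form a closed curve $\delta = \omega \cup (\text{half of the boundary})$ and claim that if $\ell(\omega) \le a(t_n)/2$ then the resulting geodesic has length at most $a(t_n)/2 + 2t_n < a(t_n)$, ``using $a(t_n) < 4t_n$.'' That inequality is backwards: $a(t_n) < 4t_n$ says $2t_n > a(t_n)/2$, so $a(t_n)/2 + 2t_n > a(t_n)$. You get a closed curve that is \emph{permitted} to be as long as or longer than the systole, so there is no contradiction. Your fallback observation that $\ell(\gamma') < 4t_n$ is true but useless: the $a$- and $b$-curves already have length $a(t_n) < 4t_n$, so a closed geodesic shorter than $4t_n$ is unremarkable. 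The paper's reflection trick only works for the \emph{subarc} of the geodesic arc between two consecutive crossings of the same $f$-curve (whose endpoints genuinely lie on the mirror), which gives $2\ell(\omega) \ge a(t_n)$, not for the full arc plus part of the boundary.

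\textbf{The second part.} You reflect $\omega$ across $f_j$ and assert that $\omega \cup \rho_{f_j}(\omega)$ is a closed curve ``because the endpoints on $f_j$ match up.'' But the endpoints of $\omega$ are not on $f_j$: by hypothesis they lie on the top and bottom boundary components of $C_j$, which are disjoint from $f_j$. The reflection $\rho_{f_j}$ does not fix those boundary components even setwise (it carries the top of $C_j$ to the top of a mirror-image cross on the other side of $f_j$), so $\rho_{f_j}(\omega)$ starts and ends somewhere else and the union is not a loop. Post-composing with $\rho_e$ doesn't rescue this, since that only swaps top with bottom without aligning the specific endpoints. So the curve you want to invoke Lemma~\ref{lem:surgery} against does not exist.

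\textbf{What's actually needed.} After using reflections to reduce to the case where the arc meets each $f$-curve at most once and avoids the seams, the paper splits on whether the arc crosses $e$. If it doesn't cross $e$, it must wrap most of the way around the ring, and the bound comes from the \emph{orthogonal projection onto $e$}, which has length at least $\frac{2n-1}{2n}e(t_n) > a(t_n)/2$. In the remaining case (the minimal arc disjoint from both $e$ and the seams) the paper pins down the isometry type of the arc exactly and computes its length from a right-angled pentagon: $\cosh(\tau/2) = \sinh(e/4)\sinh(e/4n)$, which after squaring yields $\cosh\tau > \cosh(c/2)$ and hence $\tau > a(t_n)/2$. Neither the projection step nor the explicit trigonometric estimate appears in your sketch, and they cannot be replaced by the crude inequality $a(t_n) < 4t_n$. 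You would need to rebuild the argument around those two tools.
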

\begin{proof}
Let $\gamma$ be a shortest non-trivial arc from one boundary $B$ of $R(n,t_n)$ to itself. In particular, $\gamma$ is geodesic and orthogonal to the boundary. 

If $\gamma$ intersects some $f_j$ twice, then we can reflect a subarc $\omega \subset \gamma$ from $f_j$ to itself across $f_j$ to obtain a non-trivial closed curve of length $2\ell(\omega)$ in $R(n,t_n)$. By \corref{cor:systoles_fixed_t} we get that $2\ell(\gamma) > 2 \ell(\omega)  \geq a(t_n)$.

If $\gamma$ intersects the seams, then we can perform a surgery on $\gamma$ and $\rho_{\text{seams}}(\gamma)$ to obtain a strictly shorter essential arc from $B$ to itself, unless $\gamma=\rho_{\text{seams}}(\gamma)$. One way to see this is to double the ring $R(n,t_n)$ across its boundary and apply \lemref{lem:surgery} to the doubled arcs. Thus distinct non-trivial arcs of minimal length from $B$ to itself are disjoint. But if $\gamma=\rho_{\text{seams}}(\gamma)$, then $\gamma$ intersects some $f$-curve at least twice, hence is longer than $a(t_n)/2$ by the previous paragraph. The only exception is if $\gamma$ is contained in a single cross $C_j$. But in that case, if we double $C_j$ across $B$ we obtain a pair of crosses and a closed geodesic of length $2\ell(\gamma)$ in it. This pair embeds isometrically in $R(n,t_n)$, showing that $2\ell(\gamma) > a(t_n)$. The inequality is strict because no systole in $R(n,t_n)$ is symmetric about any $f$-curve. 

We can therefore assume that $\gamma$ is disjoint from the seams and intersects each $f$-curve at most once. Up to the symmetries of $R(n,t_n)$, this leaves two possibilities for $\gamma$ depending whether it intersects $e$ or not. 

Recall that the complement of the seams in $R(n,t_n)$ is an annulus $A$. As such, there is a well-defined orthogonal projection $A \to e$. If $\gamma$ does not intersect $e$, then it intersects all the $f$-curves, and its orthogonal projection onto $e$ is longer than 
\[
\frac{(2n-1)}{2n} e(t_n) > \frac12 e(t_n) > \frac12 a(t_n).
\] 
See \figref{fig:proj}. Since the orthogonal projection does not increase distances, we get that $\ell(\gamma) > a(t_n)/2$.

\begin{figure}[htp]
\centering
{\includegraphics{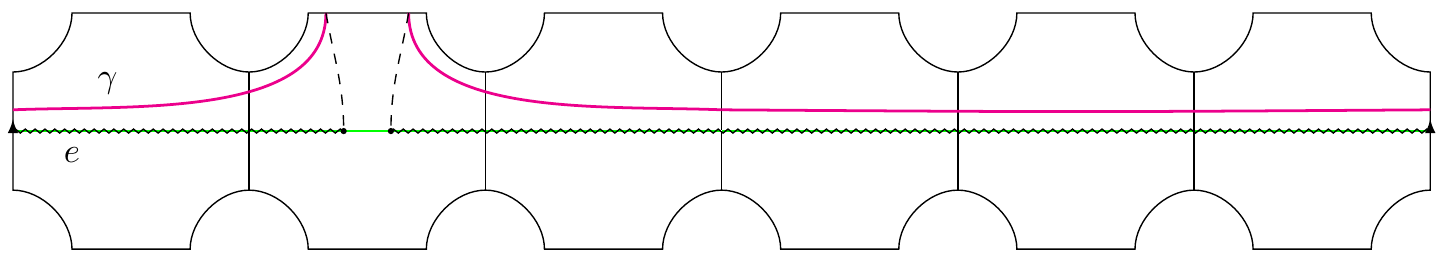}}
\caption{If $\gamma$ does not intersect $e$, then its projection onto $e$ covers most of $e$}\label{fig:proj}
\end{figure}

If $\gamma$ intersects $e$, then $\gamma$ and $\rho_e(\gamma)$ intersect. One of the two possible surgeries on $\gamma \cup \rho_e(\gamma)$ yields a pair of arcs $\alpha$ and $\beta$, each joining the top and bottom boundaries of some cross $C_j$ in $R(n,t_n)$, neither of which can be homotoped into $C_j$ (see \figref{fig:arcs}). This gives $\ell(\gamma) >\ell(\alpha)=\ell(\beta)$, so it suffices to show that $\ell(\alpha)>a(t_n)/2$. We have reduced the first part of the statement of the lemma to the second part.

\begin{figure}[htp]
\centering
{\includegraphics{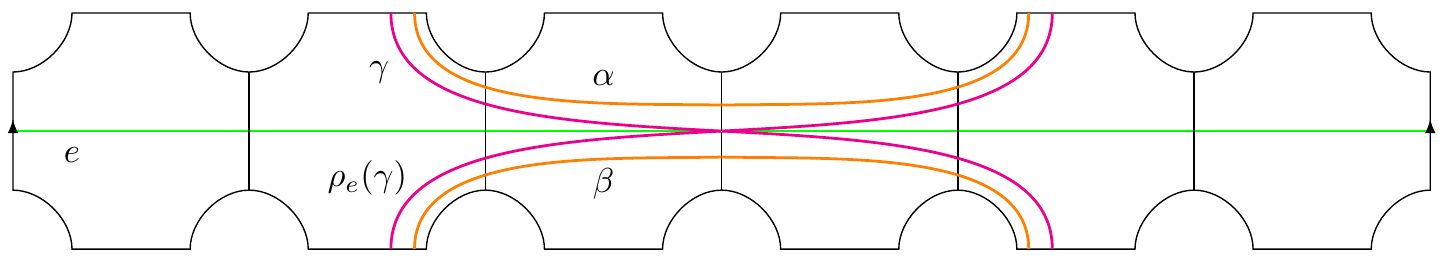}}
\caption{If $\gamma$ intersects $e$ there is a surgery on $\gamma \cup \rho_e(\gamma)$ producing a pair of arcs $\alpha$ and $\beta$ joining two opposite boundaries of a cross}\label{fig:arcs}
\end{figure}

Let $\tau$ be an arc of minimal length in $R(n,t_n)$ that joins the top and bottom boundaries of some cross $C_j$ and cannot be homotoped into $C_j$. By the same argument as above, we may assume that $\tau$ intersects each $f$-curve at most once and is disjoint from the seams. If $\tau$ intersects $e$, then it wraps most of the way around the annulus $A$ so that its orthogonal projection onto $e$ is longer than $e(t_n)/2 > a(t_n)/2$ similarly as above. Otherwise, $\tau$ is equal to the arc $\alpha$ from the preious paragraph or one of its images by the group $\langle \rho_{\text{seams}}, \rho_e, \nu_j \rangle$ where $\nu_j$ is the reflection swapping the left and right sides of $C_j$. In any case, there is a right-angled pentagon with two adjacent sides of lengths $e/4$ and $e/4n$, and the opposite side of length $\tau/2$ (see \figref{fig:arc_estimate}). Equations \eqref{eq:pentsinh} and \eqref{eq:length_of_e} give
\[
\cosh(\tau/2)=\sinh(e/4)\sinh(e/4n) = \sinh(e/4)\coth(t)> \sinh(e/4).
\]
Squaring yields
\[
\frac{\cosh(\tau)+1}{2} = \cosh^2(\tau/2) > \sinh^2(e/4) = \cosh(c/2)
\]
hence
\[
\cosh(\tau) > 2\cosh(c/2)-1 > \cosh(c/2).
\]
This shows that $\ell(\tau) > c(t_n)/2 = a(t_n)/2$, which concludes the proof.
\end{proof}

\begin{figure}[htp]
\centering
{\includegraphics{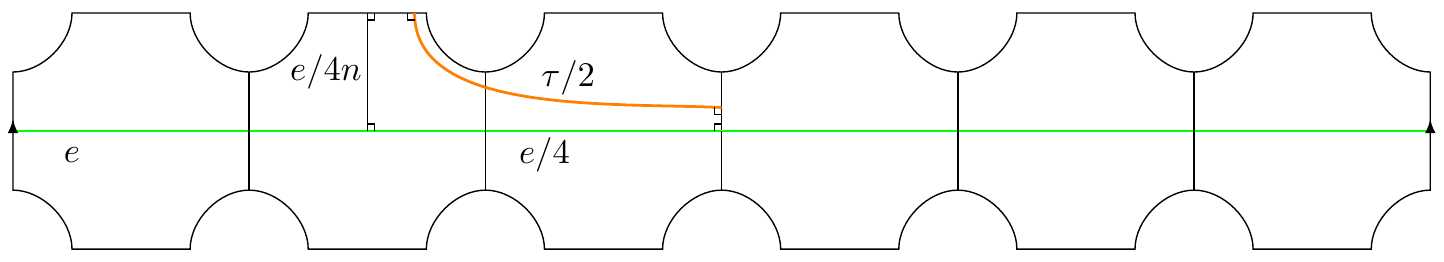}}
\caption{The right-angled pentagon allowing us to compute the length of the shortest arc in \lemref{lem:arcs_in_ring}}\label{fig:arc_estimate}
\end{figure}

\subsection{The Bolza surface}

When $n=1$, the pair of transverse rings is a closed surface of genus $2$ obtained by gluing the opposite sides of the cross $C(t_1)$ with half twists. We now show that this surface--- denoted $\tree(1)$---is the \emph{Bolza surface}, which is the surface of genus two with largest automorphism group (cf. \cite[p.588]{SchmutzMaxima}).

\begin{thm} \label{thm:bolza}
$\tree(1)$ is the Bolza surface.
\end{thm}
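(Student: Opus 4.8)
The plan is to identify $\tree(1)$ by pinning down its automorphism group and invoking the characterization of the Bolza surface as the unique genus-$2$ surface whose automorphism group has maximal order $48$ (equivalently, the unique genus-$2$ surface admitting a $(2,3,8)$-triangle-group action). First I would assemble the symmetries of $\tree(1)$ coming directly from the construction. Since $\tree(1)$ is built from the single cross $C(t_1)$ by gluing its left boundary to its right boundary and its top boundary to its bottom boundary, each with a half twist, every isometry of $C(t_1)$ respecting these identifications descends to $\tree(1)$. From the cross we already have: the diagonal symmetry exchanging the horizontal and vertical $e$-curves (this is what makes the pair of transverse rings possible when $n=1$), the reflections $\rho_e$ and $\rho_{\text{seams}}$, the rotation by $\pi/2$ permuting the four boundary components cyclically, and the reflections across the seams. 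I would also use the half-twist gluings themselves: they produce an order-$2$ isometry (a ``translation by half'' along each $e$-curve composed with the reflection swapping top and bottom strands, i.e. the map $\eta$ in the $n=1$ case), which does not come from an isometry of the cross fixing the boundary setwise in the naive way. Counting carefully, one finds a group of isometries of order $48$.

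Next I would show this order-$48$ group is all of $\aut(\tree(1))$ and that it is the $(2,3,8)$-triangle group. For the upper bound, the Hurwitz-type bound for genus $2$ gives $|\aut| \leq 48$, so exhibiting $48$ symmetries already forces equality. To see that the group is the correct one, I would locate the order-$8$ element: the rotation of the cross by $\pi/4$... more precisely, the composition of the $\pi/2$-rotation of $C(t_1)$ with a seam reflection, or the order-$8$ rotation visible in the octagon picture $O(t_1)$ fixing the central vertex. Combined with an order-$3$ element (which must be produced more cleverly, e.g. as a rotation permuting three of the pentagons or, more reliably, as the known order-$3$ element of the $(2,3,8)$-group acting on the genus-$2$ surface — alternatively one can argue purely group-theoretically that any order-$48$ subgroup of the genus-$2$ mapping class group realized by isometries must be the $(2,3,8)$-triangle group, since this is the only group of order $48$ acting on a genus-$2$ surface).

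Alternatively, and perhaps more cleanly, I would exploit the orbifold-cover observation already made in the text: the cross $C(t)$ is an orbifold cover of the quadrilateral $Q(t)$ with angles $(\pi/2,\pi/2,\pi/2,\pi/4)$, and $\tree(1)$ is an orbifold cover of $Q(t_1)$ as well. The reflection group of the $(2,2,2,4)$-quadrilateral $Q$ contains the $(2,3,8)$-triangle group as an index-$2$ orientation-preserving-and-then-some subgroup; chasing the covering $\tree(1)\to Q(t_1)$ and computing its degree (via areas: $\area(\tree(1))=2\pi(2\cdot 2-2)=4\pi$ while $\area(Q(t_1))$ is computed from its angle sum, giving $\pi - (\pi/2+\pi/2+\pi/2+\pi/4) = \pi/4$, so the degree is $16$, and $16\cdot$ the order-$2$ orbifold stabilizers accounts for $48$) shows $\tree(1)$ is precisely the genus-$2$ surface uniformized by a normal subgroup of the $(2,2,2,4)$-group, which is the Bolza surface. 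The key point is that the specific value $t_1$ is forced: it is the unique value for which $C(t)$ closes up metrically to a surface covering the regular $(2,2,2,4)$-quadrilateral, i.e. for which the pentagon $P(t_1)$ has the right shape — and one must check that this coincides with the value $t_1$ defined by $a(t_1)=c(t_1)$ in \lemref{lem:existenceanduniqueness}.

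The main obstacle I anticipate is exactly this last consistency check: verifying that the parameter $t_1$ singled out by the equation $a(t_1)=c(t_1)$ is the same as the parameter for which the gluing of $C(t)$ yields the maximally symmetric (Bolza) metric rather than some nearby less-symmetric genus-$2$ surface. Concretely one must show that at $t=t_1$ the octagon $O(t_1)$ is the regular hyperbolic right-angled octagon (all sides equal), i.e. $2t_1 = \sigma(t_1)$, or equivalently $2t_1 = \arccosh(\coth^2 t_1)$ via \eqnref{eq:sigma}; this would make $C(t_1)$ the regular $(2,2,2,4)$ quadrilateral orbifold cover and identify $\tree(1)$ with the Bolza surface unambiguously. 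So the proof reduces to a one-variable transcendental identity, which I would verify by showing the equation $2t = \arccosh(\coth^2 t)$ has a unique positive solution (monotonicity of both sides) and that this solution also satisfies \eqnref{eq:equivalence} with $n=1$, i.e. $\cosh t = \tanh(e(t)/4)\sinh(e(t)/4)$ where $e(t) = 4\arcsinh(\coth t)$; both conditions should collapse to the same number, completing the identification.
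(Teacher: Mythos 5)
Your plan is to identify $\tree(1)$ by its symmetry group (order $48$, acting as the $(2,3,8)$-triangle group), or alternatively via an orbifold-cover/area computation, and you correctly isolate the crux at the end: one must show that the parameter $t_1$ singled out by $a(t_1)=c(t_1)$ is the one making the octagon $O(t_1)$ regular, i.e.\ $2t_1=\sigma(t_1)$. But as written the argument is circular and incomplete, and some of the bookkeeping is off.

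The first branch, ``counting carefully, one finds a group of isometries of order $48$,'' cannot be justified before the deferred consistency check. All the isometries you list (diagonal swap, $\rho_e$, $\rho_{\text{seams}}$, $\pi/2$-rotation, the half-twist shift $\eta$) are \emph{combinatorial} symmetries of the cross-plus-gluing that exist for every parameter $t$, not just $t_1$. If they already gave a group of order $48$, then the genus-$2$ surface obtained from $C(t)$ would be maximally symmetric for an interval of $t$, contradicting uniqueness of the Bolza surface. The extra symmetries that bring the count up to $48$ exist only when $O(t)$ is regular (its symmetry group jumps from $D_4$ to $D_8$), which is precisely what has not yet been established; so the $48$-count presupposes the conclusion. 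The second branch has arithmetic slips: the area of the quadrilateral should be $2\pi - (\pi/2+\pi/2+\pi/2+\pi/4)=\pi/4$, not $\pi - \cdots$; and a degree-$16$ orbifold cover with order-$2$ stabilizers would suggest a deck group of order $16$, not $48$ ($16\cdot 2\neq 48$). Getting $48$ from this chain requires showing that the $(2,2,2,4)$-quadrilateral reflection orbifold itself covers the $(2,3,8)$-triangle orbifold (with the correct degree $3$), which you neither state nor verify, and the paper explicitly warns that $X(\Gamma)\to Q$ is not regular in general.

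The paper's proof runs in the opposite, and much shorter, direction. It starts from the regular right-angled octagon $O$ (eight equilateral $\pi/4$-triangles), assembles the cross $C(s/2)$, observes directly that the $a$-, $b$-, and $c$-curves of the resulting closed surface all have length $2s$ (main diagonals and pairs of opposite sides of $O$), and concludes $t_1=s/2$ by the \emph{uniqueness} in \lemref{lem:existenceanduniqueness}; then a cut-and-paste of the front octagon onto the back produces the standard regular-octagon-with-opposite-sides-identified picture of the Bolza surface. This completely avoids any automorphism counting or transcendental identity. Your final paragraph essentially rediscovers the need for this step (``both conditions should collapse to the same number''), but the cleanest way to discharge it is exactly the paper's one: do not solve $2t=\sigma(t)$ separately and compare --- instead, verify $a=c$ for the regular-octagon cross and let uniqueness of $t_1$ do the rest.
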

\begin{proof}
Let $s$ be the side length of a regular hyperbolic triangle with interior angles $\pi/4$. Eight such triangles fit together at a point to form a regular right-angled octagon $O$. Glue two such octagons together to form a cross isometric to $C(s/2)$, then glue opposite ends of $C(s/2)$ with half twists. The $a$- and $b$-curves in the resulting closed surface are main diagonals of $O$, hence have length $2s$. Similarly, each $c$-curve is equal to the union of two opposite sides of the octagon, hence has length $2s$. This shows that $a(s/2)=2s=c(s/2)$ so that $t_1=s/2$. In particular, the $f$-curves in $\Sigma(1)$ have the same length as the curves of type $a$, $b$ and $c$.

Now cut the front octagon of $\Sigma(1)$ into $8$ equilateral triangles and attach them to the corresponding sides of the back octagon. The result is a regular octagon with interior angles $\pi/4$. The sides of the latter are identified in opposite pairs to form $\tree(1)$ (see \figref{fig:bolza}). This is a standard representation of the Bolza surface \cite[Section 3]{Karcher}.
\end{proof}

\begin{figure}[htp]
\centering
{\includegraphics{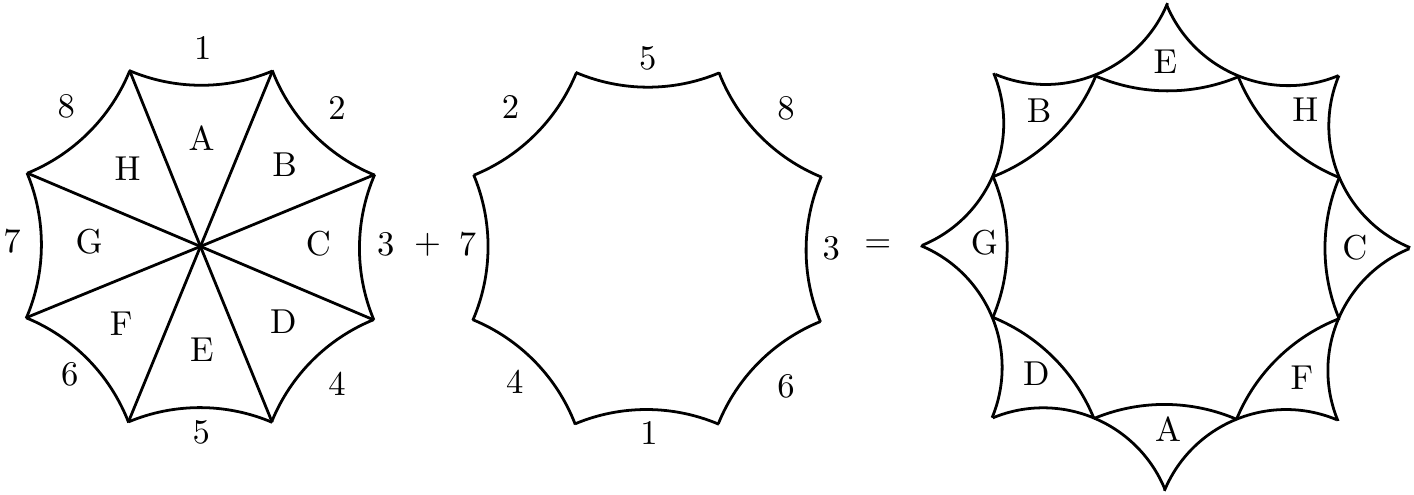}}
\caption{$\Sigma(1)$ is the Bolza surface}\label{fig:bolza}
\end{figure}

\begin{remark} \label{rem:bolza_length}
The above proof shows that $a(t_1)=c(t_1)=4t_1$. After some elementary algebraic manipulations\footnote{We have $\cosh(2t_1)=\cosh(c(t_1)/2)=\sinh^2(e(t_1)/4)=\coth^2(t_1)$ by definition, which implies that $(2\cosh^2(t_1) -1)(\cosh^2(t_1)-1) = \cosh^2(t_1)$. This is a quadratic equation in $\cosh^2(t_1)$ whose only solution larger than $1$ is given by $\cosh^2(t_1) = 1 + 1 /\sqrt{2}$. Thus $\cosh(2t_1)= 2 \cosh^2(t_1) - 1 = 1 + \sqrt{2}$.}, one arrives at the exact formula $t_1 = \arccosh(1+\sqrt{2})/2$.
\end{remark}

\subsection{The tree of rings} \label{subsec:tree}

For $n \geq 1$, let $T(n)$ be the $n$-regular tree. We build a hyperbolic surface $\tree(n)$ called the \emph{tree of rings} by replacing each vertex $v \in T(n)$ with a copy $R_v$ of the ring $R(n,t_n)$ such that two rings $R_v$ and $R_w$ are transverse if and only if the vertices $v$ and $w$ are adjacent in $T(n)$. In other words, each edge of $T(n)$ is replaced by a cross $C(t_n)$ and the crosses are glued in such a way that those corresponding to the $n$ edges adjacent to any vertex in $T(n)$ form a ring isometric to $R(n,t_n)$. 

The resulting surface $\tree(1)$ is closed of genus two, $\tree(2)$ has two ends accumulated by genus and $\tree(n)$ has a Cantor set of ends accumulated by genus when $n\geq 3$ (see \figref{fig:tree}).

\begin{figure}[htp]
\centering
{\includegraphics[width=\textwidth]{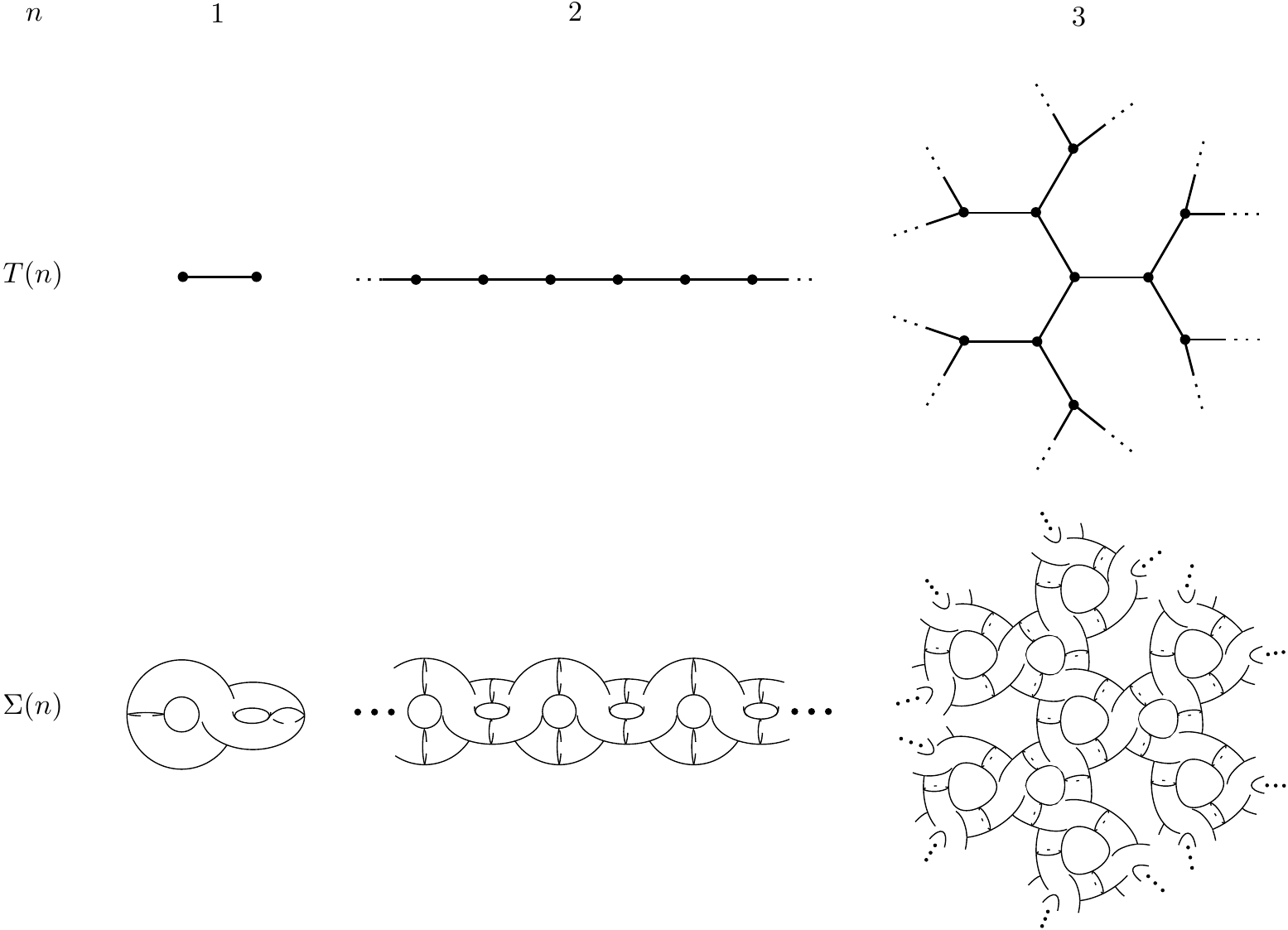}}
\caption{The tree of rings for $n=1,2,3$}\label{fig:tree}
\end{figure}

We now determine the systoles in $\tree(n)$, starting with $\tree(1)$ as a warm-up.

\begin{prop} \label{prop:systoles_bolza}
The systoles in $\tree(1)$ are the $a$- and $b$-curves in the horizontal and vertical rings, the two $c$-curves and the two $f$-curves. The total number of systoles is $12$ and their length is $2\arccosh\left(1+\sqrt{2}\right) \approx 3.057$.
\end{prop}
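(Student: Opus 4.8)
The plan is to run the surgery-and-symmetry elimination of \propref{prop:systoles_in_ring}, now on the closed surface $\tree(1)$, taking into account that at $t=t_1$ the $a$-, $c$- and $f$-curves are all equally short. First I would record that every one of the twelve curves in the statement has length $2\arccosh(1+\sqrt2)$. This number is $4t_1$ by \remref{rem:bolza_length}; the $a$-curves have length $4t_1$ by \eqnref{eq:length_of_a} together with $a(t_1)=4t_1$, the $b$-curves because $b_j=\rho_{f_j}(a_j)$, the two $c$-curves because $a(t_1)=c(t_1)$ by definition of $t_1$, and the two $f$-curves because every boundary geodesic of $C(t_1)$ has length $4t_1$. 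A short check --- most transparently in the octagon model of \thmref{thm:bolza} --- shows these twelve geodesics are pairwise distinct, so $\sys(\tree(1))\le 4t_1$. The same formulas give $e(t_1)>c(t_1)=4t_1$, so the two $e$-curves are \emph{not} systoles.

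Next, let $\gamma$ be an arbitrary systole of $\tree(1)$. Exactly as in \propref{prop:systoles_in_ring}, I would use \lemref{lem:surgery} to show that $\gamma$ meets the seams, each of the two $e$-curves, and each of the two $f$-curves at most once: if $\gamma$ met one of them transversally at least twice, then $\gamma$ would be invariant under the corresponding involution ($\rho_{\text{seams}}$, a reflection $\rho_e$, or a reflection $\rho_{f_j}$), and an invariant geodesic is then either one of our two $f$-curves, one of the two $e$-curves (too long), or is excluded by the same case analysis as in \propref{prop:systoles_in_ring}. So we may assume that $\gamma$ is neither an $e$- nor an $f$-curve and that it crosses all of these reference curves minimally.

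It remains to pin down such a $\gamma$. Cutting $\tree(1)$ open along the seams and using that $\gamma$ crosses each seam at most once, $\gamma$ breaks into a bounded number of arcs inside the two octagons that make up the cross $C(t_1)$, with endpoints matched by the seam-gluing. Tracking these arcs --- and using the diagonal symmetry of the cross interchanging the horizontal and vertical rings, the reflections $\rho_e$ and $\rho_{f_j}$, and the translation $\eta$, to reduce the number of configurations --- one finds that $\gamma$ must be an $a$-, $b$-, or $c$-curve: every other minimal crossing pattern either fails to close up into a closed geodesic, or, by a right-angled pentagon estimate of the kind used in \lemref{lem:arcs_in_ring}, yields a geodesic strictly longer than $4t_1$. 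Counting the surviving curves gives exactly the twelve listed; combined with the bound $\sys(\tree(1))\le 4t_1$ from the first step, this shows that the systoles are precisely these twelve, of common length $2\arccosh(1+\sqrt2)\approx 3.057$.

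The main obstacle is this last step. In \propref{prop:systoles_in_ring} the strict inequalities $a<4t$ and $a<e$ let one discard the $e$- and $f$-curves immediately, leaving only $a$- and $b$-curves; here the equalities $a(t_1)=c(t_1)=4t_1$, which also equals the length of the $f$-curves, mean several distinct families tie for shortest, so one must enumerate the minimal crossing patterns carefully and, in particular, establish a lower bound for the length of arcs that cross the cross ``diagonally'', ruling out hypothetical geodesics that weave between the horizontal and vertical directions. (Alternatively, since \thmref{thm:bolza} identifies $\tree(1)$ with the Bolza surface, one could simply invoke the classical description of its systoles; but I would keep the self-contained argument above, which doubles as a template for the case $n\ge2$.)
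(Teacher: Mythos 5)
Your proposal reaches the right conclusion but by a route that is genuinely different from --- and, crucially, less complete than --- the paper's. The paper's argument splits according to whether a systole $\gamma$ is disjoint from one of the two $f$-curves or intersects both. In the first case, the elimination argument of \propref{prop:systoles_in_ring} runs verbatim in the ring obtained by cutting $\tree(1)$ along the $f$-curve that $\gamma$ avoids; the only change is that boundary and $f$-curves can no longer be discarded as ``too long'' since they now tie with $a$ and $b$, so one directly gets $a$-, $b$- and $f$-curves. In the second case, the paper cuts $\tree(1)$ along the two \emph{diagonal} axes of symmetry of the cross $C(t_1)$, producing two congruent annuli, each with one $f$-curve as core geodesic. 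A $\gamma$ meeting both $f$-curves must cross both annuli, the shortest crossing arc of each annulus has length $c/2$, and equality forces $\gamma$ to be a concatenation of two seams, i.e., a $c$-curve. That short argument replaces the step you flag as ``the main obstacle.''

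Your own final step --- cutting along the seams and enumerating minimal crossing patterns --- is where the proposal stops short of a proof, and it has two concrete gaps beyond simply being unworked. First, the $c$-curves are \emph{contained in} the seams rather than transverse to them, so they do not decompose into arcs inside the two octagons under your cut; your enumeration framework is therefore blind to them, and a priori also to any other closed geodesic lying inside the union of the seams. You would need a separate argument for geodesics in (or, more generally, tangent to) the seam locus. Second, the reflection-invariance case analysis in your step two cannot simply be ``the same as in \propref{prop:systoles_in_ring}'': for $n=1$ the $f$-curves are no longer long enough to be discarded, the two $e$-curves intersect one another so ``$\rho_e$'' is no longer a single well-defined reflection, and $\rho_{\text{seams}}$ fixes the $c$-curves pointwise. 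The paper's dichotomy together with the diagonal annular decomposition cleanly avoids all of these complications.
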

\begin{proof}
The $e$-curves are longer than the $c$-curves by construction, hence longer than the $a$- and $b$-curves. The proof of \propref{prop:systoles_in_ring} applies almost verbatim to show that the shortest curves disjoint from the horizontal (resp. vertical) $f$-curve are the $a$- and $b$-curves in the horizontal (resp. vertical) ring together with the vertical (resp. horizontal) $f$-curve. The only difference is that the $f$-curves were ruled out in \propref{prop:systoles_in_ring} for being too long by hypothesis. 

Let $\gamma \subset \tree(1)$ be a systole that intersects both $f$-curves. Consider the two diagonal axes of symmetry of the cross $C(t_1)$. These curves divide $\tree(1)$ into a union of two congruent annuli with piecewise geodesic boundary, each containing one of the $f$-curves as its core geodesic. By hypothesis $\gamma$ traverses each annulus at least once. It is easy to see that the shortest arc across either annulus has length $c/2$. Therefore $\ell(\gamma)\geq c$ with equality if and only if $\gamma$ is a concatenation of two seams, i.e., a $c$-curve.

Since the $a$-, $b$-, $c$- and $f$-curves all have the same length equal to $2\arccosh\left(1+\sqrt{2}\right)$ (see \remref{rem:bolza_length}), they are the systoles.
\end{proof}

\begin{prop} \label{prop:systoles_in_tree}
For every $n\geq 2$, the systoles in the tree of rings $\tree(n)$ are the $a$- and $b$-curves contained in rings, together with the $c$-curves contained in pairs of transverse rings.
\end{prop}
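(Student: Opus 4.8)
The plan is to imitate the proof of \propref{prop:systoles_in_ring}, run globally on $\tree(n)$, exploiting that the tree of rings is extremely symmetric. Since every vertex of the $n$-regular tree $T(n)$ carries an isometric copy of $R(n,t_n)$, every edge an isometric copy of $C(t_n)$, and these are glued in a uniform pattern, every automorphism of $T(n)$ induces an isometry of $\tree(n)$; combined with the internal symmetries of the rings and the crosses (including the diagonal and horizontal--vertical symmetries of a cross), this furnishes, for each $f$-curve $f$ a reflection $\rho_f$ of $\tree(n)$ fixing it, for each ring core $e_v$ a reflection $\rho_{e_v}$, an orientation-reversing involution fixing all the seams (the common extension of the maps $\rho_{\mathrm{seams}}$), and a reflection fixing each $c$-curve. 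A further structural point is that each ring $R_v$, and each pair of transverse rings $R_v\cup R_w$, is a \emph{totally geodesic} subsurface of $\tree(n)$: its frontier consists of $f$-curves of neighbouring rings, which are simple closed geodesics. Finally $a(t_n)=c(t_n)$ by \lemref{lem:existenceanduniqueness}, and all the curves in the statement have this common length, so $\sys(\tree(n))\le a(t_n)$; it remains to prove that every closed geodesic of length at most $a(t_n)$ is an $a$-, $b$- or $c$-curve.

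Let $\gamma$ be a systole. First I would show that $\gamma$ meets each $f$-curve at most once, following \propref{prop:systoles_in_ring}: if it met $f$ twice then $\gamma=\rho_f(\gamma)$ by \lemref{lem:surgery}; since $\ell(f)=4t_n>a(t_n)$ we have $\gamma\ne f$, so $\gamma\perp f$, and then $\gamma$ passes perpendicularly through one of the finitely many distinguished points of $f$. Through such a point the only complete geodesics are the ring core $e_v$ (of length $>c(t_n)=a(t_n)$, too long) and the ``seam geodesics''; and because of the half-twist closing each ring, a seam geodesic runs along a non-backtracking path in $T(n)$, hence is bi-infinite, not closed --- a contradiction in either case. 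As in \propref{prop:systoles_in_ring} one runs the parallel ``at most once'' statements for the seams and for the ring cores, using the involutions above; the one genuinely new feature is that the $c$-curves survive this sieve, and they will reappear in the last step.

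Next comes the confinement step, which I expect to be the main obstacle. Cutting $\tree(n)$ along all the $f$-curves decomposes it into the crosses $\{C_e\}$. Fix a cross $C_e$ met by $\gamma$, with $e=\{v,w\}$, so that $C_e\subset R_v\cup R_w$. The complement of $R_v\cup R_w$ in $\tree(n)$ splits into connected pieces, each attached to $R_v\cup R_w$ along one or two of its frontier $f$-curves, the two-curve case being a neighbouring ring beyond a shared cross, so that the two frontier curves are opposite boundary components of that cross. If $\gamma$ left $R_v\cup R_w$ and returned it would have to cross a frontier $f$-curve twice --- impossible --- unless it returned through the \emph{other} frontier curve of the same piece; then $\gamma$ contains an arc $\omega$ through the complement joining those two curves, and a complementary arc $\omega'$ of $\gamma$ inside $R_v\cup R_w$ joining them, necessarily across the shared cross. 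Closing $\omega$ by a shortest arc $\delta$ of that cross between the two curves gives an essential closed curve of length $\ell(\omega)+\ell(\delta)\ge\sys(\tree(n))=\ell(\omega)+\ell(\omega')$, forcing $\ell(\omega')\le\ell(\delta)$; since $\ell(\omega')\ge d(h_1,h_2)=\ell(\delta)$ as well, $\omega'$ equals the common perpendicular $\delta$, so $\gamma\perp h_1$, and one is back in the distinguished-point situation of the previous paragraph: $\gamma$ is either $e_u$ (too long), a seam geodesic (not closed), or a $c$-curve contained in some transverse pair. In all surviving cases $\gamma$ lies in a single pair of transverse rings $R_p\cup R_q$; being totally geodesic, $R_p\cup R_q$ then makes $\gamma$ a closed geodesic of the surface ``pair of transverse rings''. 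A complementary estimate using \lemref{lem:arcs_in_ring} to bound $\ell(\omega)$ from below handles the remaining configurations directly.

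It then remains to determine the systoles of a pair of transverse rings for $n\ge2$. Here I would adapt the argument of \propref{prop:systoles_bolza} (the case $n=1$): using $\rho_f$ one reduces to geodesics disjoint from a given $f$-curve, which lie in one of the two rings and, by \corref{cor:systoles_fixed_t}, are $a$- or $b$-curves when they are as short as $a(t_n)$ (the $f$-curves themselves now being too long); a geodesic meeting $f$-curves in both rings is handled by cutting along the two diagonal axes of the shared cross and checking that the shortest arc across each resulting region has length $c(t_n)/2$, with equality precisely along the $c$-curves. Since $a(t_n)=c(t_n)$, the $a$-, $b$- and $c$-curves all realize the systole of $\tree(n)$, and by the above they are the only closed geodesics that do. The two places where real work is needed are the confinement step just described --- verifying that the reflections genuinely extend to $\tree(n)$, the half-twist bookkeeping, and ruling out the various perpendicular-geodesic subcases --- and the transverse-pair computation for general $n$, a more elaborate version of \propref{prop:systoles_bolza}.
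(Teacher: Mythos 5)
Your plan differs structurally from the paper's, and the differences introduce genuine gaps. The paper never establishes your opening claim that a systole meets each $f$-curve at most once; instead it defines the \emph{shadow} of $\gamma$ in the tree $T(n)$, observes that a loop in a tree backtracks at least twice, and uses \lemref{lem:arcs_in_ring} as the key length bound: each backtrack forces a subarc of $\gamma$ longer than $a(t_n)/2$, so two backtracks along distinct edges already make $\gamma$ too long. Your attempt to prove the ``at most once'' claim by imitating \propref{prop:systoles_in_ring} does not go through as sketched: in the ring that argument first pins down that $\gamma$ meets the seams and $e$ exactly once (using the shift $\eta$, which does not extend to $\tree(n)$), and only then can one say that $\gamma\perp f$ forces $\gamma$ through the $\rho_f$-fixed points of the seams and $e$. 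Your version jumps straight to ``$\gamma$ passes through one of the finitely many distinguished points of $f$,'' but a $\rho_f$-invariant geodesic orthogonal to $f$ may cross $f$ anywhere; there is no a priori finite set of candidate crossing points without first controlling intersections with the seams and the $e$-curves, which is precisely the hard part in the tree.

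The confinement step has further issues. You fix one cross $C_e$ met by $\gamma$ and argue that $\gamma$ cannot leave $R_v\cup R_w$ and return, but the argument as written only treats the case of a single excursion through a single complementary piece; a systole could a priori enter and exit several pieces, and then $\omega'$ is not a single arc and the inequality $\ell(\omega')\le\ell(\delta)$ is no longer meaningful. Even in the one-excursion case, the chain ``$\ell(\omega')\ge d(h_1,h_2)=\ell(\delta)$'' takes for granted that the short perpendicular $\delta$ inside $C_{\{u,v\}}$ realizes the global distance between $h_1$ and $h_2$ in $\tree(n)$, which is plausible but not proved; and ``$\omega'$ equals the common perpendicular $\delta$'' is incompatible with your own requirement that $\omega'$ pass through the chosen cross $C_{\{v,w\}}$ (the perpendicular lies entirely in the different cross $C_{\{u,v\}}$). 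What you actually want at that point is a contradiction, not $\omega'=\delta$, and the cleanest route to that contradiction is exactly the paper's \lemref{lem:arcs_in_ring}: the turning arc through the piece is already longer than $a(t_n)/2$, and so is the turning arc back. Finally, your last step defers the transverse-pair analysis to ``a more elaborate version of \propref{prop:systoles_bolza},'' but this is not a routine adaptation --- the paper devotes roughly half of the proof to this case, again leaning on \lemref{lem:arcs_in_ring} plus the diagonal reflection $\rho_d$ and the seam reflection. In short, the proposal omits the central tool (\lemref{lem:arcs_in_ring}) around which the paper's argument is organized, and the substitutes offered (the ``at most once'' claim and the essentiality-plus-distance bookkeeping) have gaps that cannot be closed by minor edits.
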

\begin{proof}
Let $\gamma$ be a systole of $\tree(n)$. We define the \emph{shadow} of $\gamma$ in $T(n)$ as follows. First we cut $\gamma$ along the $f$-curves into subarcs $\gamma_1, \ldots, \gamma_k$ labelled in cyclic order along $\gamma$. For each subarc $\gamma_j$ that joins two boundaries of a cross $C$ which are not opposite of each other (i.e., each subarc that ``turns'' from one ring to another), its shadow $s(\gamma_j)$ is the edge in $T(n)$ corresponding to the pair of transverse rings that intersect along $C$. The shadow $s(\gamma_j)$ of each subarc $\gamma_j$ that does not turn is defined to be the vertex $v \in T(n)$ corresponding to the ring $R_v$ containing $\gamma_j$ in its interior. The shadow $s(\gamma)$ is defined as the concatenation of the shadows $s(\gamma_1), \ldots, s(\gamma_k)$. This forms a loop in $T(n)$.
 
The shadow $s(\gamma)$ is not well-defined if $\gamma$ is disjoint from the $f$-curves or is equal to one of them. But in that case $\gamma$ is contained in a ring so that it is either an $a$-curve or a $b$-curve by \corref{cor:systoles_fixed_t}.

Being a loop in a tree, $s(\gamma)$ has at least two places where it backtracks, that is, an edge which it traverses twice in a row in opposite directions. By definition of the shadow, a backtrack corresponds to an arc entering and leaving a ring through the same cross, turning at the beginning and at the end. By \lemref{lem:arcs_in_ring}, such an arc is longer than $a(t_n) /2$. In particular, if $s(\gamma)$ has two backtracks happening along two distinct edges, then $\gamma$ has two disjoint subarcs longer that $a(t_n) /2$ each, so that it is not a systole.

This leaves the possibility that $s(\gamma)$ is just a loop formed by traversing one edge $\{v,w\}$ of $T(n)$ twice in opposite directions. In that case, $\gamma$ is contained in a pair of transverse rings $R_v \cup R_w$ and turns exactly twice in the cross $C=R_v \cap R_w$. 

We can write $\gamma$ as the concatenation of two arcs $\gamma_v$ and $\gamma_w$ where $\gamma_v = \gamma \cap R_v$ and $\gamma_w = \gamma \setminus \gamma_v$. This means that $\gamma_v$ contains both turns of $\gamma$. In particular, $\gamma_v$ is not contained in $C$ so that $\ell(\gamma_v) > a(t_n) / 2$ by \lemref{lem:arcs_in_ring}.

Suppose that the two endpoints of $\gamma_v$ belong to the same boundary component of $C$. Then $\gamma_w$---which is contained in $R_w$---can be reflected across that $f$-curve to form a non-trivial closed curve in $R_w$. That curve is longer than $a(t_n)$ by \corref{cor:systoles_fixed_t}, hence $\ell(\gamma_w) > a(t_n) / 2$. This gives $\ell(\gamma)=\ell(\gamma_v)+\ell(\gamma_w)>a(t_n)$. 

By exchanging the roles of $R_v$ and $R_w$, the previous argument shows that the two turning subarcs of $\gamma$ have endpoints in all four boundary components of $C$. This implies that $\gamma$ intersects one of the two diagonal axes of symmetry of $C$---call it $d$---twice. But the reflection of $C$ in the curve $d$ extends to a global isometry $\rho_d$ of $\tree(n)$. By \lemref{lem:surgery}, we have $\rho_d(\gamma)=\gamma$. If $\gamma$ also intersects the seams, then it intersects them twice by symmetry across $d$.  In that case, $\gamma$ is invariant under $\rho_\text{seams}$ as well. But then the two turning subarcs of $\gamma$ in $C$ are mirror images across the seams, hence have endpoints in only two boundary components of $C$. That contradicts the first sentence of this paragraph.

We know that $\rho_d(\gamma)=\gamma$ and that $\gamma$ is disjoint from the seams. Consider the subarc $\alpha \subset \gamma$ contained in $R_v$ with two endpoints on $d$ and let $\beta=\rho_d(\alpha)$ so that $\gamma=\alpha \cup \beta$. If $\alpha$ intersects any $f$-curve twice, then $\ell(\alpha) > a(t_n) /2$ by an argument above so that $\ell(\gamma) = \ell(\alpha) + \ell(\beta)= 2 \ell(\alpha) > a(t_n)$. Thus $\alpha$ intersects each $f$-curve at most once. This determines the homotopy class of $\alpha$ up to moving the endpoints along $d$ since the complement of the seams in $R_u$ is an annulus. That is, $\alpha$ wraps once around $R_u$ intersecting each $f$-curve once along the way, while staying disjoint from the seams. We conclude that $\gamma$ is homotopic to a $c$-curve, hence equal to one of them.
\end{proof}

\subsection{Signed graphs} \label{subsec:signed_graph}

Let $n\geq 3$. In order to get a closed surface, we glue copies of the cross $C(t_n)$ along a finite $n$-regular graph $\Gamma$ instead of the tree $T(n)$. In order to determine the gluings precisely, we need  a bit more structure on $\Gamma$, namely,
\begin{itemize}
\item a cyclic ordering of the edges adjacent to any vertex;
\item a sign $\eps(e_1,e_2) \in \{+,-\}$ attributed to any two consecutive edges $e_1, e_2$ around a vertex, subject to the condition that the product of the signs around any vertex is negative. 
\end{itemize}
We call a graph equipped with this additional structure a \emph{signed graph}. Note that a choice of cyclic ordering around each vertex is known as an (oriented) ribbon structure. However, we will now define when two signed graphs are isomorphic, and such isomorphisms need not preserve the ribbon structure.

Given a vertex $x$ in a signed graph $\Gamma$, we define the \emph{vertex flip} around $x$  to be the operation that reverses the cyclic ordering around $x$ and changes the signs between each edge $e$ containing $x$ and its two immediate neighbors around the vertex $e \setminus x$ (see \figref{fig:flip}). Clearly, any two vertex flips commute. We say that two signed graphs are \emph{isomorphic} if one can be obtained from the other by a set of vertex flips.

\begin{figure}[htp]
\centering
{\includegraphics{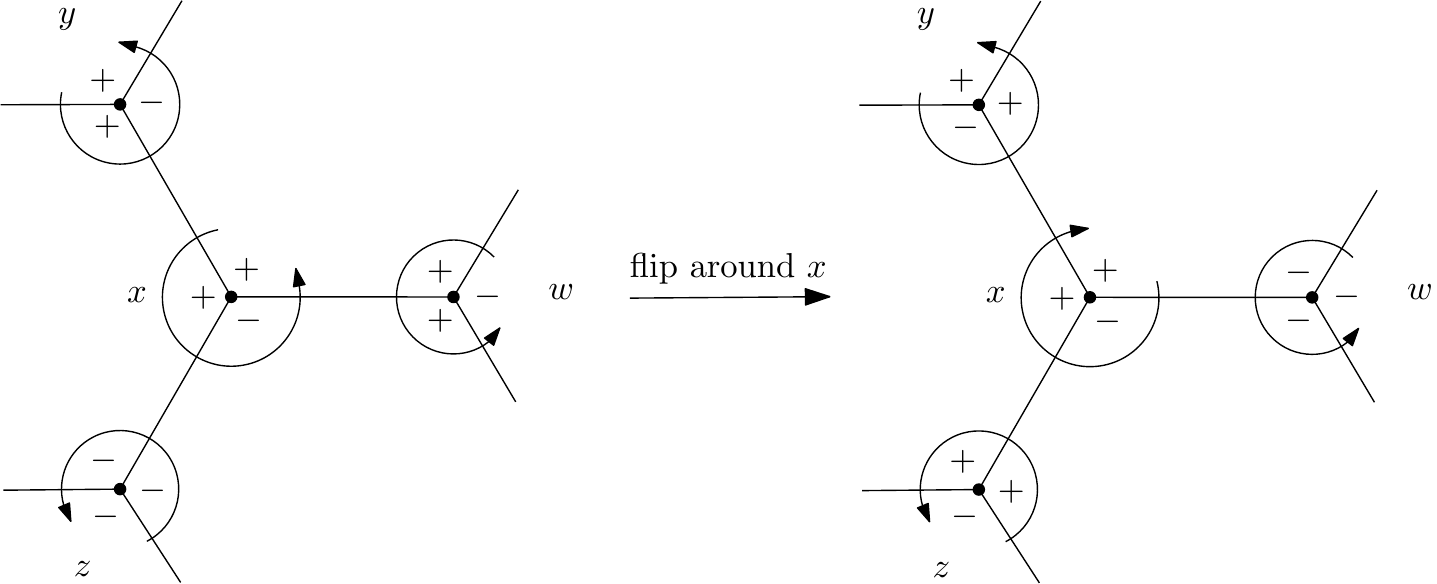}}
\caption{A vertex flip on a signed graph}\label{fig:flip}
\end{figure}

\subsection{Gluing crosses according to a signed graph} \label{subsec:graph_to_surface}

Let $n\geq 3$ and let $\Gamma$ be a connected, $n$-regular, signed graph. We construct a surface $X(\Gamma)$ modelled on $\Gamma$ as follows. To each edge $e$ in $\Gamma$ corresponds a cross $C_e$ isometric to $C(t_n)$. The edge $e=\{u,v\}$ has two neighboring edges (which coincide when $n=2$) around each of $u$ and $v$. We glue 
\begin{itemize}
\item the predecessor of $e$ around $u$ to the left of $C_e$; 
\item  the successor of $e$ around $u$ to the right of $C_e$; 
\item the predecessor of $e$ around $v$ to the bottom of $C_e$; 
\item the successor of $e$ around $v$ to the top of $C_e$. 
\end{itemize}
Each of these gluings is done as to make the seams match. This still leaves two possibilities for each gluing: either with a half twist or not. This is determined using the signs between consecutive edges: the ``$+$" signs mean no twist and the ``$-$" signs call for half twists.

Note that for a string of crosses, the half twists do not affect the isometry type. However, when we close up the string to form a loop, they do. For instance, with an even number of half twists the seams separate, but with an odd number of half twists they do not. It is easy to see that a chain of $n$ crosses isometric to $C(t_n)$ glued end to end is isometric to the ring $R(n,t_n)$ if and only if the number of half twists is odd. This is why we require the product of the signs around each vertex in $\Gamma$ to be negative.   

We also remark that rotating each cross by angle $\pi$ around one of its diagonals exchanges left and bottom as well as right and top. Thus changing the order between $u$ and $v$ above merely switches the horizontal and vertical axes but not the gluings themselves. Each ring can be seen as either horizontal or vertical interchangeably; this notion need not be globally defined.

The surface $X(\Gamma)$ is defined as
\[
X(\Gamma) = \left(\bigsqcup_{e \in E} C_e \right) / \sim
\]
where $E$ is the set of edges of $\Gamma$ and the equivalence $\sim$ identifies boundary points of different crosses as described above. 

The sign structure of $\Gamma$ induces a cyclic ordering of the crosses in each ring. For any ring $R$ in $X(\Gamma)$, there are exactly $n$ other rings transverse to it. When two of these transverse rings pass through adjacent crosses of $R$, let us say that they are \emph{parallel}. Whether the cyclic orderings in parallel rings passing through adjacent crosses  $C_{e_1}$ and $C_{e_2}$ agree or not is indicated by the sign $\eps(e_1,e_2)$. If we reverse the cyclic ordering of the crosses in $R$, then the comparison between parallel rings transverse to $R$ is unaffected. However, for every ring parallel to $R$, there is a change of sign: if orders agreed before, they do not anymore and vice versa. In other words, if $\Gamma_1$ and $\Gamma_2$ are isomorphic signed graphs, then there is an orientation-preserving isometry between $X(\Gamma_1)$ and $X(\Gamma_2)$.

As an example, if $\Gamma$ is the $n$-regular tree $T(n)$ equipped with an arbitrary sign structure, then $X(\Gamma)$ is isomorphic to the tree of rings $\tree(n)$. This is because any two sign structures on $T(n)$ are isomorphic, a fact left as an exercise\footnote{Hint: First show that any sign pattern (with negative product) around a vertex $v$ can be changed into any other (with negative product) by doing some vertex flips around the neighbors of $v$. Furthermore, this can be done even if one neighbor of $v$ is required to be left intact.} to the reader. 

\subsection{The ribbon graph induced by a signed graph}

There is a useful combinatorial object $\widehat{\Gamma}$ that comes between the signed graph $\Gamma$ and the surface $X(\Gamma)$ which makes the correspondence more transparent. This object is a (non-orientable) $4$-regular ribbon graph, and is obtained from $\Gamma$ as follows:
\begin{itemize}
\item for each edge $e=\{u,v\}$ of $\Gamma$ corresponds a vertex $\widehat{e}$ in $\widehat\Gamma$;
\item each vertex in $\widehat\Gamma$ is $4$-valent, and its adjacent edges are given a cyclic order;
\item the vertices in $\widehat\Gamma$ that correspond to the predecessor and successor of $e$ around $u$ in $\Gamma$ share edges with $\widehat{e}$, and these edges are to be opposite in the cyclic order;
\item similarly for the vertices corresponding to the two immediate neighbors of $e$ in the cyclic order around $v$;
\item the ribbon edge between two vertices in $\widehat \Gamma$ is given a half twist if the sign between the corresponding edges of $\Gamma$ is negative, and no twist otherwise. 
\end{itemize}

In this way, the $n$ edges adjacent to any vertex in $\Gamma$ become a cycle of length $n$ in $\widehat\Gamma$ which is homeomorphic to a M\"obius band, because there is an odd number of half twists. Adjacent vertices in   $\Gamma$ correspond to M\"obius bands that intersect transversely in $\widehat\Gamma$.
 
To go from $\widehat{\Gamma}$ to $X(\Gamma)$, simply inflate each $4$-valent vertex to a cross $C(t_n)$. Associate the edges around the vertex to the four boundary components of $C(t_n)$ so that the cyclic order goes: left, bottom, right, top. Then glue crosses with or without half twist according to whether the edges of $\widehat{\Gamma}$ have a half twist or not.

From the surface $X(\Gamma)$, we can go back to $\widehat\Gamma$ by collapsing the front and back of each cross (i.e., taking the quotient of $X(\Gamma)$ by the reflection across the seams) then taking the graph dual to the decomposition of the resulting surface into octagons. Note that in this way, the seams of $X(\Gamma)$ correspond to the boundary components of $\widehat\Gamma$. 

\subsection{The $n=2$ case} \label{subsec:n=2}

A $2$-regular signed graph $\Gamma$ does not appear to carry e\-nough in\-for\-mation to prescribe how to glue crosses together. For instance, there is only one cyclic ordering on two elements, whereas there are two distinct directions of travel along a ring made with two crosses. 

For $n=2$, we start directly with a graph $G$ playing the role of $\widehat \Gamma$ instead. That is, let $G$ be a finite, connected, $4$-regular, ribbon graph such that any path in $G$ which does not turn (i.e., goes to the opposite edge in the cyclic order at each vertex) is closed of length $2$, and has a neighborhood homeorphic to a M\"obius band. Given such a graph $G$, we obtain a surface $X(G)$ by replacing each vertex of $G$ with a cross $C(t_2)$ and gluing them in the prescribed way as in the previous subsection. The resulting surface $X(G)$ is such that each of its crosses belongs to two rings isometric to $R(2,t_2)$.

We claim that there are two isomorphism classes of such graphs $G$ with $V$ vertices if $V\geq 2$ is a multiple of $3$, and only one isomorphism class otherwise. 

Pick any M\"obius band $B$ of length two in $G$ and cut $G$ along the two edges of $B$. The resulting object $H$ has two vertices that have two opposite half-edges not connected to anything.  Pick either of these vertices, start on one side of it, and start walking along an uncut edge. At the next encountered vertex, turn left, and so on, until you reach a dead end. In this way, the path traced is a boundary component of $H$ which passes through each vertex only once.

We can draw the ribbon graph $H$ in the plane as a tubular neighborhood of a regular $V$-gon with its sides extended a little bit, one side cut open, and the ends of each uncut side glued via a half twist (see \figref{fig:cutgon}). The left-turning path traced above corresponds to the inner boundary component of this cut $V$-gon. 

\begin{figure}[htp]
\centering
{\includegraphics{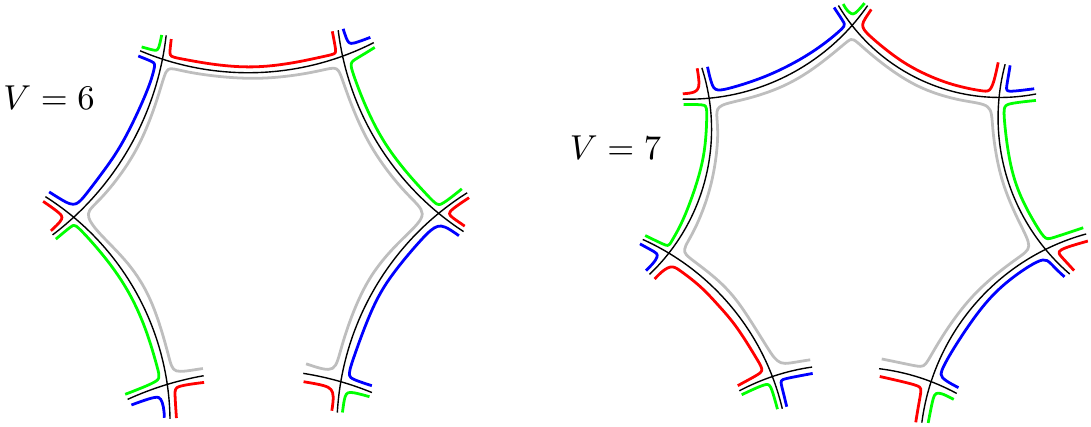}}
\caption{A representation of the ribbon graph $H$ with $6$ and $7$ vertices. The ends of each long segment are glued with a half twist. This leaves four half-edges that need to be paired up.}\label{fig:cutgon}
\end{figure}

The graph $G$ is obtained from $H$ by pairing up the two free half-edges of the first vertex with the two free half-edges of the last vertex, and giving one pair a half twist. There are two ways to pair them, and two choices for which pair gets a half twist, for a total of four choices (see \figref{fig:4choices}). However, some of these choices yield isomorphic objects. To see this, color the four boundary components of $H$ gray, red, green and blue. In the planar representation, $H$ has $2V+2$ ends and $2V+2$ gaps between these ends, one of which is on the inside. Each outer gap is connected (via half twists at the ends of extended sides) to the third next gap. This is why the residue of $V$ modulo $3$ is relevant.

\begin{figure}[htp]
\centering
{\includegraphics[width=\textwidth]{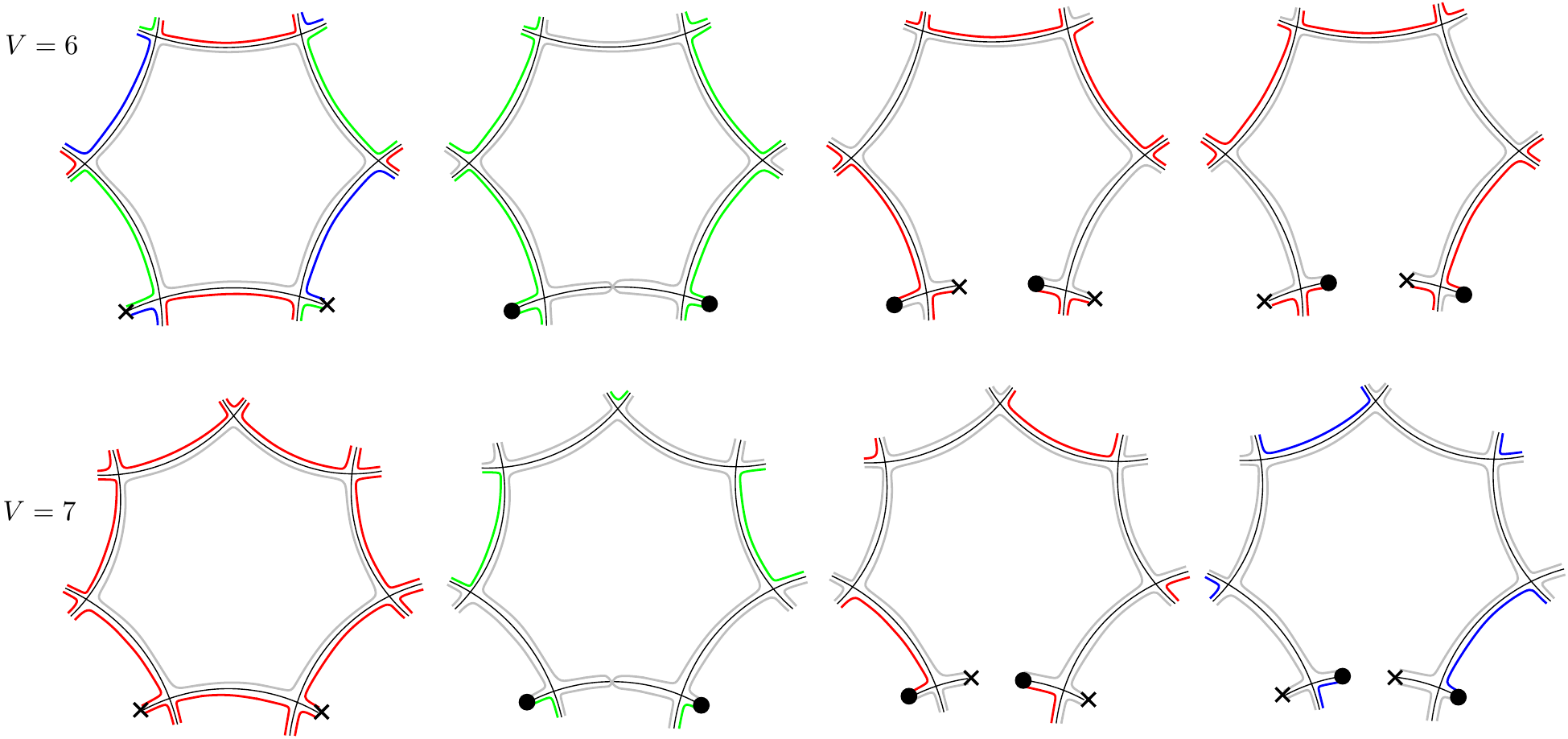}}
\caption{The four admissible pairings with $6$ and $7$ vertices. The crosses indicate a half twist whereas the dots indicate a lack thereof. Sides with the same color belong to the same boundary component.}\label{fig:4choices}
\end{figure}

To fix ideas, color the inner gap gray and the first outer gap, as well as those that it is connected to, in red. Similarly, color the other two boundary components green and blue. The last outer gap gets colored red if and only if $2V$ (hence $V$) is a multiple of $3$. Assume this is the case. Then at each of the first and last vertices there is one free half-edge with one side gray and one side red, and one free half-edge with one side green and one side blue.

As indicated earlier, there are four ways to close up $H$:
\begin{itemize}
\item If we glue gray to gray and red to red, then green gets glued to green and blue to blue. The resulting ribbon graph $G$ has four boundary components of length $V$ each. 
\item If we glue gray to red, then green gets glued to blue. The resulting ribbon graph $G$ has two boundary components of length $2V$ each.   
\item If we glue gray to blue, then red gets glued to green. The resulting ribbon graph $G$ has two boundary components of length $2V$ each. 
\item If we glue gray to green, then red gets glued to blue. The resulting ribbon graph $G$ has two boundary components of length $2V$ each.
\end{itemize}
The four possibilities are depicted on the first row of  \figref{fig:4choices} for $V=6$. One can check that the last three ribbon graphs are all isomorphic via cut-and-paste, so we indeed get two distinct isomorphism classes. 

Suppose that $V$ is not a mutiple of $3$. Then if two colors are on two sides of the same free half-edge of the first vertex in $H$, they are on different free half-edges of the last vertex and vice versa. In this case, it is not possible to glue each color to itself, nor is it possible to connect the colors in two pairs. Whichever of the four admissible gluings we pick, one color closes up while the three other colors connect together (see the second row of \figref{fig:4choices} for $V=7$). That is to say, any ribbon graph $G$ as above with $V \neq 0 \mod 3$ vertices has one boundary component of length $V$ and one boundary component of length $3V$. This implies that we can represent $G$ as a tubular neighborhood of a regular $V$-gon in the plane with sides extended and all half twists on the outside (as in the first column of \figref{fig:4choices}). In other words, there is only one isomorphism class.
   
\begin{remark}
In the sequel, we will not distinguish between the case $n=2$ and $n\geq 3$. That is, we will abuse notation and speak of the surface $X(\Gamma)$ for a $2$-regular signed graph $\Gamma$. In those instances, one should take $X(\Gamma)$ to be any of the surfaces $X(G)$ for graphs $G$ as above with the same number of vertices as $\Gamma$. 
\end{remark}

\subsection{Systoles}

We will show that the systoles in the surface $X(\Gamma)$ defined above are the $a$-, $b$- and $c$-curves, provided that $\Gamma$ has sufficiently large girth. The \emph{girth} of a graph is defined as the length of its shortest non-trivial loop. The problem with graphs with small girth is that the seams of the crosses in $X(\Gamma)$ can close up to form short geodesics. The following lemma shows that the seams are indeed the main thing to worry about.

\begin{lem} \label{lem:sigma_shortest}
Let $t>0$. The shortest non-trivial arcs in the cross $C(t)$ with endpoints in the boun\-dary are the seams, which have length $\sigma(t)$.
\end{lem}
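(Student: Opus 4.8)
The claim is a statement about a single cross $C(t)$, so the plan is to reduce it to hyperbolic trigonometry on the pieces out of which $C(t)$ is assembled: the right-angled octagon $O(t)$ and the right-angled pentagons $P(t)$. First I would recall that $C(t)$ is the double of $O(t)$ across its $\sigma$-sides, and that $O(t)$ is itself built from four copies of $P(t)$, so that $C(t)$ contains eight copies of $P(t)$. A non-trivial arc $\omega$ in $C(t)$ joining two boundary points must cross the seams or a reflection axis at some point; the idea is to cut $\omega$ along the lines of symmetry of $C(t)$ (the seams and the diagonals) and replace it by a possibly shorter arc that lies in a single pentagon $P(t)$ and runs between two of its sides. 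Concretely, one shows by a surgery/reflection argument (the same kind used in \lemref{lem:surgery} and \lemref{lem:arcs_in_ring}) that the minimal length is realized by an arc that is orthogonal to both of the boundary geodesics it connects, and then that the shortest such orthogeodesic is precisely one of the seams.

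\textbf{Key steps.} (1) Observe that the seams are genuine simple arcs from boundary to boundary of length $\sigma(t)$, so the infimum in question is at most $\sigma(t)$; it remains to show no non-trivial arc is strictly shorter. (2) Let $\omega$ be a shortest non-trivial arc with endpoints on $\partial C(t)$; standard arguments make $\omega$ geodesic and orthogonal to $\partial C(t)$ at both ends, and distinct shortest arcs disjoint. (3) If $\omega$ meets a reflection axis of $C(t)$ (either a seam or one of the diagonal axes of the octagon) twice, a surgery on $\omega$ and its mirror image produces a strictly shorter non-trivial arc or shows $\omega$ is fixed by the reflection; analyzing the fixed case shows $\omega$ is contained in one of the symmetry lines, and the only such arcs from boundary to boundary are the seams themselves (the diagonal axes run between two boundary components only after passing through the interior, and in fact are longer — this uses \eqnref{eq:sigma} and the pentagon formulas). (4) In the remaining case, $\omega$ meets each reflection axis at most once, which confines $\omega$ to a single copy of the pentagon $P(t)$ joining two of its sides; then one computes directly, using \eqnref{eq:pentsinh} and \eqnref{eq:pentcoth} together with \eqnref{eq:sigma}, that the shortest arc between two sides of $P(t)$ that could correspond to a boundary-to-boundary arc of $C(t)$ has length at least $\sigma(t)$, with equality exactly for the arc that becomes a seam. (5) Conclude that $\ell(\omega)\geq \sigma(t)$, with equality iff $\omega$ is a seam.

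\textbf{Main obstacle.} The delicate part is step (3)–(4): carefully enumerating, up to the symmetry group of $C(t)$, the possible ``shapes'' of a candidate shortest arc — in particular ruling out short arcs that cut across a corner of the octagon near one of the vertices where four pentagons meet, and making sure that when $\omega$ is confined to a single pentagon we are comparing against the right pair of sides. The trigonometric inequalities themselves are elementary once the combinatorial reduction is in place — for instance, an arc orthogonally joining the two $u$-sides of $P(t)$ has length governed by \eqnref{eq:pentsinh}, and one checks this exceeds $\sigma(t)$ using $\cosh\sigma = \sinh^2 u$ from \eqnref{eq:sigma} — so the real work is organizing the case analysis cleanly, presumably by exploiting the large symmetry group of the cross to cut the number of cases down to two or three. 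I would expect the write-up to mirror the structure of the proof of \propref{prop:systoles_in_ring}, eliminating candidates one symmetry at a time until only the seams remain.
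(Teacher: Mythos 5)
Your overall strategy (surgery with reflected images to whittle down candidates, then compare lengths) is in the spirit of the paper's proof, but there is a genuine gap at step (4), and the final comparison is handled quite differently in the paper.

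The gap is the confinement claim: ``$\omega$ meets each reflection axis at most once, which confines $\omega$ to a single copy of the pentagon $P(t)$.'' This does not follow. The fixed loci of the reflections of $C(t)$ cut the cross into sixteen copies of the quadrilateral $Q(t)$, and an arc that crosses each individual axis at most once can still cross several \emph{different} axes, passing through several of these pieces (for instance: leave a boundary component, cross a $u$-axis, cross a seam, cross the other $u$-axis, and return to a boundary component). Your subsequent pentagon trigonometry therefore does not cover all remaining cases, and the case analysis you flag as ``the main obstacle'' is in fact not reduced to a one-pentagon computation by the preceding steps. Relatedly, your description of the symmetry loci is off: in $C(t)$ the ``diagonal'' axes of the octagon (perpendicular bisectors of the $\sigma$-sides) close up into closed geodesics running from seam to seam, so they are not boundary-to-boundary arcs at all and should be discarded immediately rather than compared by length; the boundary-to-boundary loci other than the seams are the two $u$-axes in each of the front and back octagons.

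The paper sidesteps both issues. It observes that the reflection loci cut $C(t)$ into topological disks, so any non-trivial boundary arc must coincide with a locus or be shortenable by surgery with its mirror image; hence the only candidates are the seams and the $u$-axes. It then compares these two not by pentagon trigonometry but by noting that each seam is freely homotopic to one of the two arcs produced by surgery on a horizontal and a vertical $u$-axis at their central crossing point; since the surgery arcs have total length equal to that of the two $u$-axes and the seam is the unique geodesic in its homotopy class, the seam is strictly shorter. This homotopy argument replaces the computation you outline via \eqnref{eq:pentsinh}, \eqnref{eq:pentcoth} and \eqnref{eq:sigma}, and it is what makes the proof go through without any case-by-case trigonometry. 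To repair your write-up you would need either to drop the confinement claim and compare directly against the seams and $u$-axes as the paper does, or to genuinely enumerate the multi-quadrilateral configurations and bound each of them below by $\sigma(t)$.
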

\begin{proof}
The only potential candidates for shortest arcs are the seams or the other axes of symmetry of $C(t)$. Indeed, any arc that intersects one of the loci of reflection can be shortened by surgery with its reflection unless it coincides with the latter. Moreover, these loci cut the cross into topological disks. Since each seam is homotopic to a surgery on one horizontal and one vertical axis, the seams are shortest.
\end{proof}

We can now prove the main result of this section.

\begin{thm} \label{thm:systoles_closed_surface}
Let $n \geq 2$ and let $\Gamma$ be a connected, $n$-regular, signed graph of girth larger than $a(t_n) / \sigma(t_n)$. Then the systoles in the surface $X(\Gamma)$ are the $a$-, $b$- and $c$-curves, which have length $a(t_n)$. If $\Gamma$ is finite, then the genus $g$ of $X(\Gamma)$ is equal to $E + 1$ where $E$ is the number of edges in $\Gamma$ and there are $(12g-12)$ systoles in $X(\Gamma)$.
\end{thm}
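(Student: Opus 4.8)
The plan is to establish, in this order: that $\sys(X(\Gamma))=a(t_n)$ with the systoles being exactly the $a$-, $b$- and $c$-curves; that $g=E+1$; and that the number of systoles is $12g-12$.

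\emph{Finding the systoles.} For each vertex of $\Gamma$ the $n$ crosses glued around it form a copy of $R(n,t_n)$ inside $X(\Gamma)$ --- it is a ring, rather than a non-orientable surface, precisely because the product of the signs around the vertex is negative --- and the two rings through a common cross form a copy of a pair of transverse rings. By \corref{cor:systoles_fixed_t} and \lemref{lem:existenceanduniqueness} (which gives $a(t_n)=c(t_n)$), the $a$-, $b$- and $c$-curves are then closed geodesics of length $a(t_n)$ in $X(\Gamma)$, so $\sys(X(\Gamma))\le a(t_n)$. It therefore suffices to prove that every closed geodesic $\gamma$ with $\ell(\gamma)\le a(t_n)$ is an $a$-, $b$- or $c$-curve; this simultaneously forces each such curve to have length exactly $a(t_n)$, hence to be a systole.

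\emph{The dichotomy.} Since $\Gamma$ is connected and $n$-regular, its universal cover is the tree $T(n)$; gluing crosses along $T(n)$ (with any sign structure, all of which are isomorphic) produces the tree of rings $\tree(n)$ of \subsecref{subsec:tree} together with a Riemannian covering $p\from\tree(n)\to X(\Gamma)$, and $p$ restricts to an isometry between each ring (resp. pair of transverse rings) of $\tree(n)$ and its image in $X(\Gamma)$, which is again a ring (resp. pair of transverse rings). Let $\gamma$ be a closed geodesic with $\ell(\gamma)\le a(t_n)$ and consider the image of $[\gamma]$ in $\pi_1(\Gamma)$. If this image is trivial, then $\gamma$ lifts to a closed geodesic $\widetilde\gamma\subset\tree(n)$ of the same length; since \propref{prop:systoles_in_tree} identifies the systoles of $\tree(n)$ as the $a$-, $b$- and $c$-curves, all of length $a(t_n)$, and $\ell(\widetilde\gamma)\le a(t_n)$, the geodesic $\widetilde\gamma$ is one of them, so $\gamma=p(\widetilde\gamma)$ is an $a$-, $b$- or $c$-curve of $X(\Gamma)$. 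If the image of $[\gamma]$ is non-trivial, form the trace of $\gamma$ in $\Gamma$ by cutting $\gamma$ along the $f$-curves, as in the shadow construction of \propref{prop:systoles_in_tree}; this trace is a non-null-homotopic loop in $\Gamma$, so the number of subarcs of $\gamma$ that turn from one ring into another is at least the girth of $\Gamma$. Each such turning subarc crosses a cross between two non-opposite boundary components, hence has length at least $\sigma(t_n)$ by \lemref{lem:sigma_shortest}; as these subarcs are pairwise disjoint, $\ell(\gamma)$ is at least $\sigma(t_n)$ times the girth of $\Gamma$, which exceeds $a(t_n)$ by hypothesis --- a contradiction. Hence the systoles of $X(\Gamma)$ are exactly the $a$-, $b$- and $c$-curves, all of length $a(t_n)$.

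\emph{Genus and count.} The surface $X(\Gamma)$ is connected (as $\Gamma$ is), closed (every boundary circle of every cross is glued, since $\Gamma$ is $n$-regular with $n\ge 2$), and orientable (the gluings are orientation-reversing on boundary circles, and a half twist is a rotation of a circle, so it does not affect orientability). Each of the $E$ crosses has Euler characteristic $-2$, and gluing along circles leaves the Euler characteristic unchanged, so $\chi(X(\Gamma))=-2E$ and thus $g=E+1$. By the handshake lemma $\Gamma$ has $2E/n$ vertices, so $X(\Gamma)$ contains $2E/n$ rings and $E$ pairs of transverse rings; each ring contains $4n$ of the systoles ($2n$ $a$-curves and $2n$ $b$-curves) and each pair of transverse rings contains $4$ of them (its $c$-curves). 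Using once more that the girth is large, so that rings and transverse pairs embed, one checks that all of these curves are distinct and that no $c$-curve lies in a single ring; the total is therefore $(2E/n)(4n)+4E=12E=12g-12$.

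\emph{Main obstacle.} The heart of the matter is the identification of the systoles, and in particular the case of a geodesic whose trace in $\Gamma$ is not null-homotopic: turning ``the trace is a long loop'' into ``$\gamma$ is long'' is where \lemref{lem:sigma_shortest} --- the fact that the shortest essential arc across a cross is a seam of length $\sigma(t_n)$ --- and the way a geodesic must thread through the intermediate crosses conspire to make the threshold $a(t_n)/\sigma(t_n)$ the natural one. The null-homotopic case reduces to \propref{prop:systoles_in_tree} via the covering $p$, which is precisely what makes available the symmetry arguments of the tree of rings that are absent in $X(\Gamma)$ itself; verifying that $p$ is a bona fide Riemannian covering carrying $a$-, $b$- and $c$-curves to curves of the same type is routine but should be done with care.
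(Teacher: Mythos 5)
Your argument is correct and follows the paper's proof closely: lift to the tree of rings via the covering $\tree(n)\to X(\Gamma)$, use the shadow (trace) of $\gamma$ in $\Gamma$ together with \lemref{lem:sigma_shortest} and the girth hypothesis to handle the case where the trace is non-null-homotopic, and apply \propref{prop:systoles_in_tree} in the remaining case; the genus and counting computations match the paper's as well. One small misstatement to correct: a chain of crosses closed up with an even number of half twists is still orientable --- a half twist is a rotation of the boundary circle, hence orientation-preserving --- so the alternative to the ring $R(n,t_n)$ is not a non-orientable surface but rather an orientable genus-one surface whose seams close up into two separate curves instead of one (cf.\ the discussion in \subsecref{subsec:graph_to_surface}); this is why it is not isometric to $R(n,t_n)$, and the slip does not affect the validity of your argument since the conclusion that the subsurface is a ring is what matters.
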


\begin{remark}
The girth of a tree is infinite by convention, hence \thmref{thm:systoles_closed_surface} generalizes \propref{prop:systoles_in_tree}.
\end{remark}

\begin{proof}
Let $\gamma$ be a systole of $X(\Gamma)$. We define the shadow $s(\gamma)$ in the graph $\Gamma$ in the same way as in the proof of \propref{prop:systoles_in_tree}. If $s(\gamma)$ is non-contractible in $\Gamma$, then it traverses more than $a(t_n) / \sigma(t_n)$ edges by hypothesis. This means that $\gamma$ traverses as many crosses, hence is longer than \[(a(t_n) / \sigma(t_n)) \cdot \sigma(t_n) = a(t_n)\]
by \lemref{lem:sigma_shortest}, contradiction. It follows that $s(\gamma)$ is a contractible loop, so that it lifts to the universal cover of $\Gamma$, the $n$-regular tree $T(n)$. The tree of rings $\tree(n)$ similarly covers $X(\Gamma)$ and $\gamma$ lifts to $\tree(n)$. By \propref{prop:systoles_in_tree}, any lift $\wtilde{\gamma}$ is one of the curves of type $a$, $b$ or $c$ in a ring or a pair of transverse rings of $\tree(n)$. Since the covering map $\tree(n) \to X(\Gamma)$ is injective on each ring and each pair of transverse rings, $\gamma$ itself is an $a$-, $b$- or $c$-curve.

Let $g$ be the genus of $X(\Gamma)$. There are $4n$ curves of type $a$ or $b$ per ring, $n$ crosses per ring, and $2$ rings per cross, hence $8$ such curves per cross. Since each cross has Euler characteristic $-2$, there are $(g-1)$ crosses in $X(\Gamma)$, hence $(8g-8)$ curves of type $a$ or $b$ in total. Since each cross is central to exactly one pair of transverse rings and there are four $c$-curves per pair, the number of $c$-curves is equal to $(4g-4)$. By construction, the number of crosses is equal to the number $E$ of edges in $\Gamma$. Note that the number $V$ of vertices in $\Gamma$ satisfies $nV = 2E$ since $\Gamma$ is regular of degree $n$.
\end{proof}

\begin{remark}
The numbers $L_n$ and $w_n $ in \thmref{thmA} are defined as $a(t_n)$ and $a(t_n)/\sigma(t_n)$ respectively.
\end{remark}

As we will see in \subsecref{subsec:length}, the quantity $a(t_n)/\sigma(t_n)$ grows exponentially with $n$. Therefore the girth of $\Gamma$---and hence the genus of $X(\Gamma)$---has to be very large for the above result to hold. The first order of business, however, is to show that the surfaces obtained are local maxima of the systole function. This is proved in the next two sections.

\section{The lengths of the systoles determine the surface locally} \label{sec:regularity}

In this section, we show that the systoles in $X(\Gamma)$ can detect any infinitesimal movement, that is, the derivative of their length is injective on the tangent space to Teichm\"uller space.

\subsection{Twist deformations}

Given a simple closed geodesic $\beta$ in a hyperbolic surface $X$, we denote by $\tau_\beta$ the infinitesimal Fenchel-Nielsen twist deformation along $\beta$. More precisely, 
\[
\tau_\beta = \left.\frac{d}{dt}\right|_{t=0} X_t    
\]
where $X_t$ is the surface obtained by cutting $X$ along $\beta$, twisting distance $t$ to the left, then regluing. Given any closed geodesic $\alpha \subset X$, the \emph{cosine formula} says that
\begin{equation} \label{eqn:wolpert}
\frac{\partial \ell_\alpha}{\partial \tau_\beta} = \sum_{p \in \alpha \cap \beta} \cos \angle_p(\alpha, \beta)
\end{equation} 
where $\angle_p(\alpha, \beta)$ is the counter-clockwise angle from $\alpha$ to $\beta$ at the point $p$ \cite{WolpertTwist,Kerckhoff}.

For every $n\geq 1$, the systoles in the ring $R(n,t_n)$ include the curves $a_1, \ldots, a_{2n}$ and $b_1, \ldots, b_{2n}$ by \propref{prop:systoles_in_ring} and \propref{prop:systoles_bolza}. We want to compute the effect of twisting around any of these curves on the length of any of them. To this end, let $M$ be the $4n \times 4n$ matrix whose $(i,j)$-th entry is the derivative of the length of the $i$-th curve in the set $S=\{a_1, \ldots, a_{2n},b_1, \ldots, b_{2n}\}$ with respect to the twist deformation along the $(2n+j)$-th curve (modulo $4n$) in $S$. Recall that the $a$-curves are pairwise disjoint, as are the $b$-curves, and that each $a_i$ intersects each $b_j$ exactly once (see \figref{fig:curves}). The cosine formula \eqref{eqn:wolpert} thus gives
\[
M_{i,j} = \begin{cases}
\cos \angle (a_i, b_j) & \text{if }i,j \in \{1,\ldots,2n\}\\
\cos \angle (b_i, a_j) & \text{if }i,j \in \{2n+1,\ldots,4n\}\\
0 & \text{otherwise}.
\end{cases}
\]
In other words, $M$ is block diagonal of the form
\[
M = 
\begin{pmatrix} 
A & 0 \\
0 & -A^\intercal
\end{pmatrix}.
\] 
In particular, $M$ is invertible if and only if $A$ is. In the following two subsections we will show that $A$ (and hence $M$) is indeed invertible.

\begin{prop} \label{prop:full_rank}
For any $n \geq 1$, the matrix $M$ of derivatives of lengths of $a$- and $b$-curves in the ring $R(n,t_n)$ with respect to the twist deformations around these curves has full rank.
\end{prop}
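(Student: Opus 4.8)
The plan is to reduce the claim to showing that the $2n \times 2n$ matrix $A$ with entries $A_{i,j} = \cos \angle(a_i, b_j)$ is invertible, since the block structure $M = \begin{pmatrix} A & 0 \\ 0 & -A^\intercal \end{pmatrix}$ already does the reduction for us. First I would analyze the combinatorics of which pairs $(a_i, b_j)$ actually intersect and what the angle is at each intersection point. Since each $a_i$ intersects $e$ once and each $b_j$ intersects $e$ once, and the curves $a_i, b_j, e$ bound a pair of congruent isoceles triangles (as recorded just before Equation~\eqref{eq:length_of_a}), the angle $\angle(a_i, b_j)$ when $i$ and $j$ are ``close'' is governed by the geometry of these triangles, and in particular is the same for all pairs at the same relative position along the ring. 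Using the translational symmetry $\eta$ (which shifts $O_j$ to $O_{j+1}$), I expect $A$ to be a \emph{circulant} matrix, or close to one: $A_{i,j}$ should depend only on $j - i \bmod 2n$. The first key step is therefore to pin down the generating vector of this circulant, i.e. to identify for each residue class $k = j-i$ the value $\cos\angle(a_i,b_{i+k})$, which will be $0$ for all but a few small values of $|k|$ because $a_i$ is disjoint from most octagons $O_{n+i}, \dots, O_{n+i+(n-1)}$.

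The second key step is to compute the eigenvalues of this circulant. For a circulant with first row $(c_0, c_1, \dots, c_{2n-1})$, the eigenvalues are $\lambda_m = \sum_{k=0}^{2n-1} c_k \omega^{mk}$ where $\omega = e^{2\pi i/2n}$, so invertibility amounts to showing that the trigonometric polynomial $p(z) = \sum_k c_k z^k$ has no $2n$-th root of unity as a zero. Because only a handful of the $c_k$ are nonzero, $p$ will be a short Laurent polynomial in $z$, and I would try to show directly that $|p(\omega^m)| > 0$ for all $m$ — for instance by showing the nonzero coefficients are all of one sign, or by a Gershgorin-type / diagonal-dominance estimate, using that the angles involved make the ``diagonal'' cosine term dominate the off-diagonal ones. (The acknowledgements mention the Gershgorin circle theorem and Wolpert's length-twist duality, which strongly suggests that a diagonal-dominance argument on $A$, possibly after a change of basis or sign adjustment, is the intended route.) If $A$ is not exactly circulant because of the half-twist gluing of $C_1$ to $C_n$, then it will be circulant up to a sign change in one corner, and I would handle that either by passing to a double cover / reindexing, or by noting the perturbation is rank-one and tracking its effect on the determinant.

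I expect the main obstacle to be the bookkeeping in the first step: correctly identifying, from the explicit picture in \figref{fig:curves} and \figref{fig:strip}, exactly which $b_j$ meet a given $a_i$, at how many points, and with what counter-clockwise angles — including getting the signs of the cosines right, since an error there would wreck the sign-based or Gershgorin-based positivity argument in the second step. A secondary subtlety is the $n=1$ case, where the ring degenerates and $S$ has only $4$ curves; there I would simply verify invertibility of the resulting small matrix by hand (this case is anyway pinned down by \thmref{thm:bolza} and \propref{prop:systoles_bolza}, so the relevant angles are those in the Bolza surface built from a regular right-angled octagon, which are completely explicit). Once the generating vector and the dominance estimate are in place, the conclusion that $A$, hence $M$, has full rank is immediate.
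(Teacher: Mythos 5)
Your overall plan---reduce to $A$, exploit the $\eta$-symmetry to get a circulant matrix, then argue via circulant eigenvalues or Gershgorin---is the right ballpark (the paper does use Gershgorin, and $A$ really is circulant). But two of your concrete expectations are false, and fixing them requires an idea you do not have yet.

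First, $A$ is not sparse. The paper records (just before the block display for $M$) that \emph{each} $a_i$ intersects \emph{each} $b_j$ exactly once, so every entry of $A$ is a nonzero cosine. Your plan to reduce to a ``short Laurent polynomial'' with only a few nonzero coefficients therefore does not get off the ground. Second, and more seriously, the diagonal of $A$ does \emph{not} dominate the rest of its row in absolute value. Look at the displayed $n=3$ matrix: the diagonal entry is $\approx 0.961$ while the off-diagonal entries sum in absolute value to $\approx 4.1$. So a direct Gershgorin or diagonal-dominance argument on $A$ fails outright, and the coefficients are also not ``all of one sign'' (the diagonal is positive, the rest are negative).

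The missing idea is to shift by the all-ones matrix $J$: the cosines cluster near $-1$ off the diagonal and near $+1$ on the diagonal, so $A+J$ has a dominant diagonal and Gershgorin applies to it. Making this quantitative is the content of \lemref{lem:angles}, which rests on the hyperbolic estimate in \lemref{lem:sumcosh}---none of which you anticipate. Having $A+J$ invertible then gives $A$ invertible by splitting $\R^{2n}$ into $\ker J$ (where $A$ and $A+J$ agree) and the span of $(1,\dots,1)^\intercal$, where one needs the separate fact $\sum_j \cos\phi_j \neq 0$ (\corref{cor:sum_angles}). Equivalently, in your circulant picture, the Fourier modes with $m\neq 0$ are handled because $\sum_k \omega^{mk}=0$ lets you replace $c_k$ by $c_k+1$ in the eigenvalue formula, and the $m=0$ mode needs the row-sum to be nonzero; but you would still need \lemref{lem:angles} to close the $m\neq 0$ case. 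As written, your proposal hits a wall at the diagonal-dominance step, and the Laurent-polynomial shortcut is unavailable because $A$ is dense.

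Your handling of the half-twist is unnecessary: the cyclic symmetry $\eta$ already shows $A_{i+1,j+1}=A_{i,j}$ with no sign defect, so $A$ is exactly circulant. Your $n=1$ fallback is fine.
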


An immediate consequence is that the twists deformations around the $a$- and $b$-curves form a basis of the tangent space to the Teichm\"uller space of the ring.

\begin{cor} \label{cor:twists_generate}
The twist deformations around the $a$- and $b$-curves in the ring $R(n,t_n)$ form a basis of the tangent space to the Teichm\"uller space of $R(n,t_n)$ with fixed boundary lengths for any $n \geq 1$.
\end{cor}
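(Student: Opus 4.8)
\subsection*{Proof proposal}

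The plan is to deduce this immediately from \propref{prop:full_rank} together with a dimension count, so that the corollary becomes a formality once the proposition is known. First I would record the dimension of the relevant Teichm\"uller space. The ring $R(n,t_n)$ is built from $n$ crosses, each a four-holed sphere of Euler characteristic $-2$, glued along circles, so $\chi(R(n,t_n)) = -2n$ and $R(n,t_n)$ has $2n$ boundary components; it has genus one. Hence the Teichm\"uller space of hyperbolic structures on $R(n,t_n)$ with the $2n$ boundary lengths held fixed has real dimension $-3\chi - 2n = 6n - 2n = 4n$ (equivalently $6g-6+2b$ with $g=1$, $b=2n$). On the other hand, the set $S = \{a_1,\dots,a_{2n},b_1,\dots,b_{2n}\}$ consists of $4n$ interior (non-peripheral) simple closed geodesics, and twisting along an interior simple closed curve leaves the boundary geodesics untouched; so the $4n$ twist deformations $\tau_{a_1},\dots,\tau_{a_{2n}},\tau_{b_1},\dots,\tau_{b_{2n}}$ all lie in the tangent space to this $4n$-dimensional space at $R(n,t_n)$. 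Since the number of vectors equals the dimension, it suffices to prove they are linearly independent.

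For linear independence I would consider the two linear maps whose composition is $M$: the map $\R^{4n} \to T_{R(n,t_n)}\mathcal{T}$ sending a coefficient vector $c$ to the combination of twist deformations $\sum_k c_k \tau_{\beta_k}$ (with $\beta_k$ the $k$-th curve one twists around, in the indexing convention used to define $M$), followed by the map $T_{R(n,t_n)}\mathcal{T} \to \R^{4n}$, $v \mapsto \big(\partial \ell_{\gamma_i}/\partial v\big)_{i}$, recording the derivatives of the lengths of the curves in $S$. By the cosine formula \eqref{eqn:wolpert} and linearity of the directional derivative, this composition is exactly multiplication by the matrix $M$ of \propref{prop:full_rank}; indeed, the $(i,j)$-entry of $M$ is by definition $\partial \ell_{\gamma_i}/\partial \tau_{\beta_j}$. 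Since $M$ has full rank $4n$ by \propref{prop:full_rank}, the first of the two maps is left-invertible, hence injective, which is precisely the statement that the $4n$ twist deformations $\tau_{a_j}, \tau_{b_j}$ are linearly independent in $T_{R(n,t_n)}\mathcal{T}$. They therefore form a basis.

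The only genuine content here is \propref{prop:full_rank} itself, whose proof (carried out in the following two subsections, via invertibility of the $2n\times 2n$ block $A$) is where the real work lies; this corollary is purely a matter of packaging. The one point I would state carefully, to avoid an off-by-something in the count, is that the Teichm\"uller space in question is that of a genuine bordered surface with geodesic boundary of fixed length -- not cusps -- so that its dimension is $6g-6+2b = 4n$ and not $6g-6+3b$ or $6g-6+b$.
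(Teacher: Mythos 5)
Your proof is correct and takes essentially the same approach as the paper: a dimension count showing $T_{R}\T$ (with fixed boundary lengths) is $4n$-dimensional, followed by linear independence of the $4n$ twist vectors deduced from the invertibility of $M$ established in \propref{prop:full_rank}. The only cosmetic differences are that the paper obtains the dimension $4n$ via a pants decomposition and Fenchel--Nielsen coordinates rather than the formula $6g-6+2b$, and states the independence step more tersely, which you usefully unpack by exhibiting $M$ as the composition of the coefficient-to-twist map with the length-differential map.
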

\begin{proof}
The ring $R$ is a surface of genus $1$ with $2n$ boundary components. As such, it admits a pants decomposition with $2n$ interior curves. The Fenchel--Nielsen coordinates for these interior curves parametrize the Teichm\"uller space with fixed boundary lengths. Hence the latter has dimension $4n$, as does its tangent space at the point $R$. By \propref{prop:full_rank}, the twist deformations about the $a$- and $b$-curves in $R$ are linearly independent. Since there are $4n$ such curves, these tangent vectors form a basis of the tangent space.
\end{proof}

In order to prove that the matrix $A$ of cosines of angles has full rank, we need to estimate these angles. It turns out that each column in $A$ has one entry close to $1$ and the other entries fairly close to $-1$. For instance, when $n=3$ one can compute that
\[
A \approx
\begin{pmatrix}
\phantom{-}0.961 & -0.652 & -0.924 & -0.962 & -0.924 & -0.652\\
-0.652 &  \phantom{-}0.961 & -0.652 & -0.924 & -0.962 & -0.924\\
-0.924 & -0.652 &  \phantom{-}0.961 & -0.652 & -0.924 & -0.962\\
-0.962 & -0.924 & -0.652 &  \phantom{-}0.961 & -0.652 & -0.924\\
-0.924 & -0.962 & -0.924 & -0.652 &  \phantom{-}0.961 & -0.652\\
-0.652 & -0.924 & -0.962 & -0.924 & -0.652 &  \phantom{-}0.961
\end{pmatrix}
\] 
From this pattern we will deduce that $0$ is not an eigenvalue of $A$.

\subsection{Angle estimate}

Let $\theta=\theta(n)$ be the angle from $e$ to any of the curves $a_j$ in the ring $R(n,t_n)$. Then the angle from any $b_j$ to $e$ is also equal to $\theta$. Also let $\phi_j$ be the counter-clockwise angle from $a_j$ to $b_1$. Recall that there are $2n$ curves $a_j$ that are images of one another by the shift $\eta : R(n,t_n) \to R(n,t_n)$ which translates by distance $e/2n$ along the curve $e$. In particular, the curves $a_j$ intersect $e$ at regularly spaced intervals of length $e/2n$ each. Therefore $b_1$, $e$ and $a_j$ together bound an isoceles triangle whose base has length $|n+1-j| \cdot e / 2n$, whose angles at the base are equal to $\theta$ and whose third angle is equal to $\phi_j$ (see \figref{fig:angles}). This holds for every $j \in \{1,\ldots, 2n\}$ except for $j=n+1$, where we get a triple intersection between $b_1$, $e$ and $a_{j+1}$.  

\begin{figure}[htp]
\centering
{\includegraphics{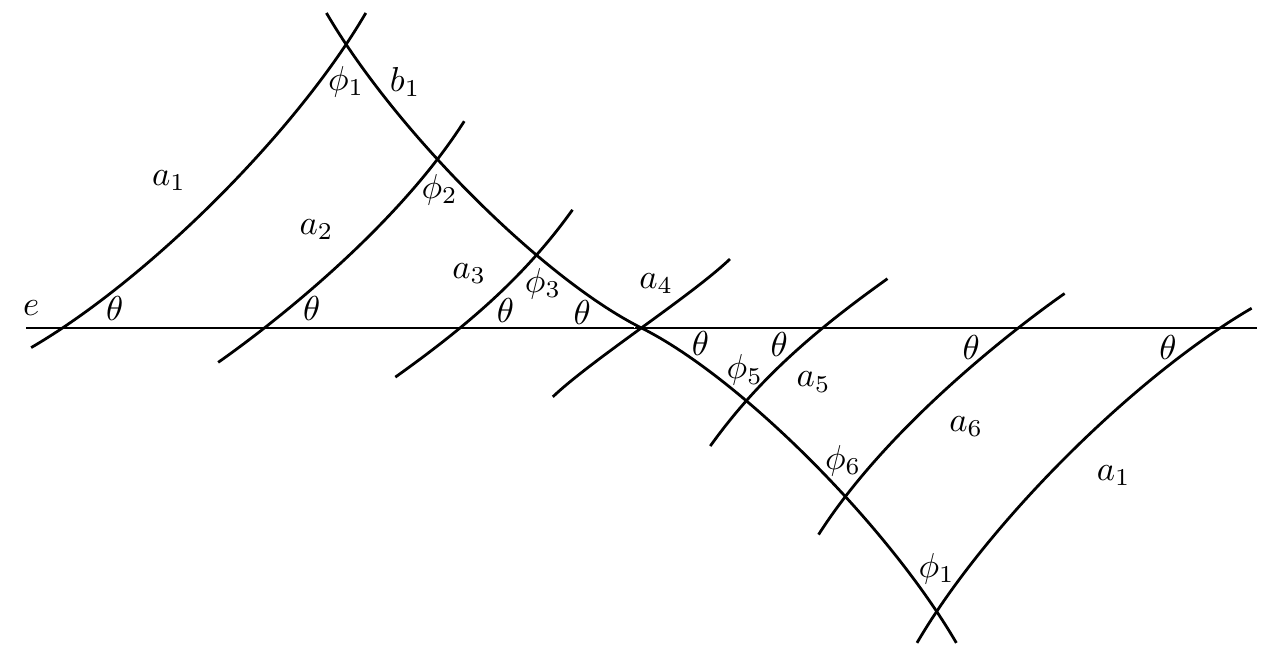}}
\caption{The isoceles triangles bounded by the curves $b_1$, $e$ and $a_j$ in the ring $R(n,t_n)$ for $n=3$}\label{fig:angles}
\end{figure}

By dropping the altitude in each isoceles triangle and applying \eqnref{eq:righttriangle2} we obtain
\[
\cos \frac{\phi_j}{2} = \sin\theta \cosh\left((n+1-j) \frac{e}{4n}\right). 
\]
The double angle formula for cosine yields
\begin{equation} \label{eqn:angles}
\cos \phi_j = 2\sin^2\theta \cosh^2\left((n+1-j) \frac{e}{4n}\right) - 1.
\end{equation}
Observe that the formula holds for $j=n+1$ as well since $\phi_{n+1}+2\theta = \pi$ so that
\[\cos \phi_{n+1} = \cos(\pi - 2 \theta) = -\cos(2\theta) = 2\sin^2 \theta - 1.\]

We will show that the first angle $\phi_1$ is very small whereas the following angles $\phi_j$ are close to $\pi$. We first need an elementary inequality involving sums of hyperbolic cosines.

\begin{lem} \label{lem:sumcosh}
For any $n \geq 1$ and any $x > \arccosh(\sqrt{2})$ we have
\[
2 \sum_{k=0}^{n-1} \cosh^2(kx) < \cosh^2(nx).
\]
\end{lem}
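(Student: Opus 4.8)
The plan is to prove the inequality $2\sum_{k=0}^{n-1}\cosh^2(kx) < \cosh^2(nx)$ by a direct computation using the identity $\cosh^2(u) = \tfrac12(\cosh(2u)+1)$ to convert everything into a geometric-type sum, and then to estimate that sum. Writing $y = 2x$, the left-hand side becomes $\sum_{k=0}^{n-1}(\cosh(ky)+1) = n + \sum_{k=0}^{n-1}\cosh(ky)$, and the sum of hyperbolic cosines telescopes via the standard closed form
\[
\sum_{k=0}^{n-1}\cosh(ky) = \frac{\sinh(ny)\cosh((n-1)y) }{\sinh(y)} \cdot \frac{1}{?}
\]
— more cleanly, $\sum_{k=0}^{n-1} e^{ky} = \frac{e^{ny}-1}{e^{y}-1}$, so taking real parts of $\sum e^{\pm ky}$ gives a closed form for $\sum \cosh(ky)$ in terms of $\sinh$. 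The right-hand side is $\tfrac12(\cosh(2nx)+1) = \tfrac12(\cosh(ny)+1)$. So after multiplying through by $2$, the claim reduces to
\[
2n + 2\sum_{k=0}^{n-1}\cosh(ky) < \cosh(ny) + 1,
\]
i.e. $\cosh(ny) > 2n - 1 + 2\sum_{k=0}^{n-1}\cosh(ky)$.

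The cleanest route from here, I expect, is induction on $n$. The base case $n=1$ reads $\cosh(y) > 1$, which holds for $y>0$ (indeed $x > \arccosh(\sqrt 2) > 0$). For the inductive step, assume $2\sum_{k=0}^{n-1}\cosh^2(kx) < \cosh^2(nx)$; I want $2\sum_{k=0}^{n}\cosh^2(kx) < \cosh^2((n+1)x)$. Adding $2\cosh^2(nx)$ to both sides of the inductive hypothesis, it suffices to show
\[
\cosh^2(nx) + 2\cosh^2(nx) = 3\cosh^2(nx) \leq \cosh^2((n+1)x),
\]
i.e. $\cosh((n+1)x) \geq \sqrt{3}\,\cosh(nx)$. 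Expanding $\cosh((n+1)x) = \cosh(nx)\cosh(x) + \sinh(nx)\sinh(x) \geq \cosh(nx)\cosh(x)$, it is enough that $\cosh(x) \geq \sqrt{3}$, which is \emph{stronger} than the hypothesis $\cosh(x) > \sqrt 2$ — so a crude bound is not quite enough and one must keep the $\sinh(nx)\sinh(x)$ term. Retaining it, I need
\[
\cosh(nx)\cosh(x) + \sinh(nx)\sinh(x) \geq \sqrt{3}\,\cosh(nx),
\]
equivalently $\cosh(x) + \tanh(nx)\sinh(x) \geq \sqrt 3$. Since $n\geq 1$ and $x>0$ one has $\tanh(nx)\geq \tanh(x)$, so it suffices that $\cosh(x) + \tanh(x)\sinh(x) \geq \sqrt 3$, i.e. $\cosh(x) + \sinh^2(x)/\cosh(x) \geq \sqrt3$, i.e. $(\cosh^2(x)+\sinh^2(x))/\cosh(x) \geq \sqrt 3$, i.e. $2\cosh^2(x) - 1 \geq \sqrt 3\,\cosh(x)$. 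Setting $c = \cosh(x) > \sqrt 2$, this is $2c^2 - \sqrt3\, c - 1 \geq 0$; the larger root of $2c^2 - \sqrt3 c - 1$ is $(\sqrt3 + \sqrt{11})/4 \approx 1.262 < \sqrt 2 \approx 1.414$, so the inequality holds for all $c \geq \sqrt 2$, hence for all $x \geq \arccosh(\sqrt 2)$ (with strictness for $x > \arccosh\sqrt2$, which is the hypothesis).

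The only subtlety — and the one place to be careful — is the interaction between the induction on $n$ and the fixed threshold $\arccosh(\sqrt 2)$: the $\sqrt 3$ that appears in the step ($3\cosh^2(nx)\le\cosh^2((n+1)x)$) is larger than the $\sqrt 2$ from the hypothesis, so one genuinely cannot discard the $\sinh(nx)\sinh(x)$ cross-term, and one must invoke $\tanh(nx)\geq\tanh(x)$ (valid since $n\geq1$) to close the gap. Once that observation is in place, the inductive step reduces to the single-variable quadratic inequality $2\cosh^2 x - 1 \ge \sqrt3\cosh x$, which holds comfortably on the whole range $x\ge\arccosh\sqrt2$. I would present the argument in exactly this order: state the reduction to $\cosh((n+1)x)\ge\sqrt3\cosh(nx)$, reduce that to the quadratic in $c=\cosh x$ using $\tanh(nx)\ge\tanh x$, and verify the quadratic. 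I do not expect any genuine obstacle; the risk is purely bookkeeping around which threshold controls which step.
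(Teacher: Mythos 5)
Your proof takes essentially the same route as the paper's: induction on $n$, with the inductive step reduced to the bound $\cosh((n+1)x)\geq\sqrt 3\,\cosh(nx)$. The way you close that step (lower $\tanh(nx)$ to $\tanh(x)$ and solve the quadratic $2c^2-\sqrt 3\,c-1\geq 0$ in $c=\cosh x$) is a harmless variant of the paper's (which keeps $\tanh(nx)>\tanh(\arcsinh 1)=1/\sqrt 2$ and uses $\sqrt 2+1/\sqrt 2>\sqrt 3$), and it is correct.

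However, your base case is mis-stated. For $n=1$ the inequality is $2\cosh^2(0)<\cosh^2(x)$, i.e.\ $\cosh^2 x>2$; equivalently, in your $y=2x$ reformulation $\cosh(ny)>2n-1+2\sum_{k=0}^{n-1}\cosh(ky)$, the $n=1$ instance is $\cosh(y)>3$, not $\cosh(y)>1$. The needed bound is precisely the hypothesis $\cosh x>\sqrt 2$ with no slack, whereas $y>0$ is far too weak---if the base case held for all $y>0$, the lemma would hold for all $x>0$, which it does not. So the threshold $\arccosh\sqrt 2$ is dictated by the base case, while the inductive step, as you rightly compute, is already satisfied once $\cosh x\geq(\sqrt 3+\sqrt{11})/4\approx 1.26$. (Incidentally, the $y=2x$ detour is never used, since you run the induction on the original form; it can be dropped.)
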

\begin{proof}
We proceed by induction on $n$. For $n=1$, the inequality reduces to $2 < \cosh^2(x)$ which is true by hypothesis. Now suppose that 
\[
2 \sum_{k=0}^{n-1} \cosh^2(kx) < \cosh^2(nx)
\]
for some $n\geq 1$. Then
\begin{equation} \label{eqn:induction}
2 \sum_{k=0}^{n} \cosh^2(kx) < 3\cosh^2(nx)
\end{equation}
which we want to show is less than $\cosh^2((n+1)x)$.

The addition formula for hyperbolic cosine gives
\begin{align*}
\cosh((n+1)x) &= \cosh(nx)\cosh(x)+ \sinh(nx)\sinh(x) \\
&> \sqrt{2}\, \cosh(nx) + \sinh(nx) \\
&= \left\{\sqrt{2} + \tanh(nx)\right\}\cosh(nx) \\
&> \left\{\sqrt{2} + \tanh(\arcsinh(1))\right\} \cosh(nx) \\
&= \left( \sqrt{2} + \frac{1}{\sqrt{2}} \right) \cosh(nx) \\
&> \sqrt{3}\, \cosh(nx)
\end{align*}
where we used the fact that $nx \geq x >\arccosh\left(\sqrt{2}\right)=\arcsinh(1)$. Putting this back in \eqnref{eqn:induction} gives
\[
2 \sum_{k=0}^{n} \cosh^2(kx) < 3\cosh^2(nx) < \cosh^2((n+1)x).
\]
By induction, the inequality holds for any $n\geq 1$.
\end{proof}

We can now show that the first angle $\phi_1$ is much closer to $0$ than any of the other angles, which approach $\pi$ rapidly as $j$ increases to $n+1$. The precise statement is expressed in terms of the cosines of the angles.

\begin{lem} \label{lem:angles}
For any $n \geq 1$, the angles $\phi_j$ from $a_j$ to $b_1$ satisfy 
\[ \sum_{j=2}^{2n} (\cos \phi_j +1) < (\cos \phi_1 + 1).  \]
\end{lem}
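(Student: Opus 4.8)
The plan is to exploit the explicit formula \eqref{eqn:angles}, namely $\cos\phi_j + 1 = 2\sin^2\theta\,\cosh^2\!\big((n+1-j)\tfrac{e}{4n}\big)$, which turns the claimed inequality into a statement purely about sums of squared hyperbolic cosines. Substituting, the desired inequality $\sum_{j=2}^{2n}(\cos\phi_j+1) < (\cos\phi_1+1)$ becomes
\[
\sum_{j=2}^{2n} \cosh^2\!\Big((n+1-j)\frac{e}{4n}\Big) < \cosh^2\!\Big(n\cdot\frac{e}{4n}\Big),
\]
after cancelling the common positive factor $2\sin^2\theta$ (here I use that $j=1$ gives argument $n\cdot\tfrac{e}{4n}=\tfrac{e}{4}$). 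So everything reduces to a clean combinatorial estimate on the multiset of integers $\{\,|n+1-j| : j=2,\dots,2n\,\}$.

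Next I would identify that multiset. As $j$ runs over $2,\dots,2n$, the value $|n+1-j|$ runs over $n-1, n-2, \dots, 1, 0, 1, \dots, n-1$; that is, each of $1,\dots,n-1$ appears exactly twice and $0$ appears once. Writing $x := e/(4n) = \arcsinh(\coth t_n)$ from \eqref{eq:length_of_e}, the left-hand side is therefore
\[
\cosh^2(0) + 2\sum_{k=1}^{n-1}\cosh^2(kx) = 1 + 2\sum_{k=1}^{n-1}\cosh^2(kx)
= 2\sum_{k=0}^{n-1}\cosh^2(kx) - 1,
\]
and the inequality to prove is $2\sum_{k=0}^{n-1}\cosh^2(kx) - 1 < \cosh^2(nx)$, which certainly follows from $2\sum_{k=0}^{n-1}\cosh^2(kx) < \cosh^2(nx)$. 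This is exactly \lemref{lem:sumcosh}, provided $x > \arccosh(\sqrt2) = \arcsinh(1)$.

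The one remaining point — and the only mild obstacle — is verifying the hypothesis $x > \arcsinh(1)$, i.e. $\arcsinh(\coth t_n) > \arcsinh(1)$, i.e. $\coth t_n > 1$. But $\coth$ is larger than $1$ for every $t_n > 0$, so this is immediate; in fact we already know $t_n > 1$ from the proof of \lemref{lem:a_shorter_than_f}, giving room to spare. (For $n=1$ the sum on the left of the claimed inequality is empty, so there is nothing to prove; alternatively one checks directly that $\cos\phi_1+1 > 0$.) Assembling these steps — rewrite via \eqref{eqn:angles}, count the multiset of arguments, apply \lemref{lem:sumcosh} with $x = e(t_n)/(4n)$ — completes the proof. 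I expect no serious difficulty; the content is entirely front-loaded into \lemref{lem:sumcosh}, which is already established.
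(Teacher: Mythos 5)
Your proof is correct and follows essentially the same route as the paper: reduce via \eqref{eqn:angles} to a sum of $\cosh^2$, identify the multiset of arguments, and apply \lemref{lem:sumcosh} after checking $e/(4n) > \arcsinh(1)$. The only slip is the parenthetical remark that for $n=1$ the sum on the left is empty — it actually has the single term $j=2$ — but your main argument (LHS equals $2\sum_{k=0}^{n-1}\cosh^2(kx)-1$) already handles $n=1$ correctly, so this does not affect the proof.
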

\begin{proof}
By \eqnref{eqn:angles} this inequality is equivalent to 
\begin{equation} \sum_{j=2}^{2n} \cosh^2\left((n+1-j) \frac{e}{4n}\right)  < \cosh^2(e/4). \label{eqn:reformulation}
\end{equation}
Each summand on the left appears twice except for $j=n+1$ so that
\[
\sum_{j=2}^{2n} \cosh^2\left((n+1-j) \frac{e}{4n}\right) < 2 \sum_{k=0}^{n-1} \cosh^2\left(k\frac{e}{4n}\right).
\]
Recall that $e=4n\arcsinh(\coth(t_n)) > 4n \arcsinh(1)$ and hence \[\frac{e}{4n} > \arcsinh(1) = \arccosh\big(\sqrt{2}\big). 
\]
We can therefore apply \lemref{lem:sumcosh} with $x = e/4n$ to obtain the desired inequality
\[
\sum_{j=2}^{2n} \cosh^2\left((n+1-j) \frac{e}{4n}\right) < 2 \sum_{k=0}^{n-1} \cosh^2\left(k\frac{e}{4n}\right) < \cosh^2(e/4).
\]
\end{proof}

\begin{cor} \label{cor:sum_angles}
For any $n \geq 1$, the angles $\phi_j$ from $a_j$ to $b_1$ satisfy 
\[ \sum_{j=1}^{2n} \cos \phi_j \neq 0.  \]
\end{cor}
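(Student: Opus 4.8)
The plan is to establish the stronger statement that $S := \sum_{j=1}^{2n}\cos\phi_j < 0$, splitting into the cases $n\ge 2$ and $n=1$.

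For $n\ge 2$, the idea is just to feed \lemref{lem:angles} into the sum after shifting each term by $1$. Using the trivial bound $\cos\phi_1+1\le 2$ together with \lemref{lem:angles} gives
\[
S+2n=(\cos\phi_1+1)+\sum_{j=2}^{2n}(\cos\phi_j+1)<2(\cos\phi_1+1)\le 4,
\]
so $S<4-2n\le 0$, which already settles all $n\ge 2$ (strictly, even for $n=2$).

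For $n=1$ this soft estimate only gives $S<2$, so one has to work a little harder; this is really the only point of substance in the proof. Here there are only two angles, $\phi_1$ and $\phi_2=\phi_{n+1}$, and by \eqnref{eqn:angles} one has $\cos\phi_1=2\sin^2\theta\cosh^2(e/4)-1$ and $\cos\phi_2=2\sin^2\theta-1$. I would first evaluate the relevant constants for the Bolza surface from \remref{rem:bolza_length}: from $\cosh(2t_1)=1+\sqrt2$ one gets $\cosh^2 t_1=1+1/\sqrt2$, hence $\sinh^2(e/4)=\coth^2 t_1=1+\sqrt2$ and $\cosh^2(e/4)=2+\sqrt2$; and from $\cosh(a/2)=\cosh t_1\cosh(e/4)$ (Equation \eqref{eq:length_of_a}) together with $a=4t_1$ one finds $\sin^2\theta=\sinh^2 t_1/\sinh^2(a/2)=\tfrac{1}{2\sqrt2+4}$. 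Substituting gives $\sin^2\theta\cosh^2(e/4)=\tfrac12$, so that $\cos\phi_1=0$ exactly (equivalently $a_1\perp b_1$, which reflects a genuine symmetry of the Bolza surface). One can then finish in either of two ways: invoke \lemref{lem:angles} in the case $n=1$, which reads $\cos\phi_2+1<\cos\phi_1+1$ and hence gives $S=\cos\phi_2<0$; or compute directly that $\cos\phi_2=2\sin^2\theta-1=-\sqrt2/2$, so $S=-\sqrt2/2$.

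In both cases $S\ne 0$, which is the assertion. The only non-formal ingredient is the Bolza trigonometry that forces $\cos\phi_1=0$ on the nose (ultimately the identity $\tanh^2 t_1=\sqrt2-1$); everything else is bookkeeping with \lemref{lem:angles} and the double-angle formula, so I expect no real obstacle.
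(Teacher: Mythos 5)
Your proof is correct and follows essentially the same approach as the paper: the $n\geq 2$ case is handled identically by shifting by $1$ and feeding in Lemma~\ref{lem:angles} to get $S+2n<4\leq 2n$, and the $n=1$ case arrives at $\cos\phi_1=0$ and $\cos\phi_2=-\sqrt 2/2$ just as the paper does, the only difference being that you derive $\cos\phi_1=0$ by explicit Bolza trigonometry whereas the paper deduces it from the symmetry observation that $a_1$ and $b_1$ each meet the $f$-curve at angle $\pi/4$.
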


\begin{proof}
First assume that $n\geq 2$. The above statement is equivalent to \[\sum_{j=1}^{2n} (\cos \phi_j+1) \neq 2n.\] By the previous lemma we have $\sum_{j=1}^{2n} (\cos \phi_j+1)< 2(\cos \phi_1 + 1) < 4 \leq 2n$.

If $n=1$, then $a_1$ meets $b_1$ at right angle since both of them intersect the $f$-curve with angle $\pi/4$. Furthermore, $\phi_2=3\pi /4$ (see \figref{fig:bolza}). Therefore \[\cos \phi_1 + \cos \phi_2 = -\sqrt{2}/2 \neq 0.\]
\end{proof}

\subsection{The Gershgorin circle theorem}

If the diagonal entries of a matrix dominate the rest, then the matrix is invertible. More generally, one can deduce information about the location of the eigenvalues from the size of the entries \cite{Gershgorin}. 

\begin{thm}[Gershgorin]
Let $U$ be an $n \times n$ matrix with entries $u_{i,j}$. Then the eigenvalues of $U$ are contained in the union of the closed disks with centers $u_{j,j}$ and radii $\sum_{i \neq j}|u_{i,j}|$. In particular, if $|u_{j,j}|>\sum_{i \neq j}|u_{i,j}|$ for every $j$, then $U$ is invertible.
\end{thm}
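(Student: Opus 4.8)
The plan is to reduce the statement to the standard ``row-sum'' form of Gershgorin's theorem and then run the classical one-line eigenvector argument. Note that the disks in the statement are centered at the diagonal entries $u_{j,j}$ with radii equal to the \emph{column} sums $\sum_{i\neq j}|u_{i,j}|$; this is the transpose of the version most often quoted. Since $U$ and $U^\intercal$ have the same characteristic polynomial, hence the same eigenvalues, it suffices to prove the following: every eigenvalue of an $n\times n$ matrix $W=(w_{i,j})$ lies in the union of the closed disks centered at $w_{i,i}$ with radii $\sum_{j\neq i}|w_{i,j}|$ (the row sums), and then apply this to $W=U^\intercal$, whose $i$-th row sum is exactly the $i$-th column sum $\sum_{k\neq i}|u_{k,i}|$ of $U$.

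First I would fix an eigenvalue $\lambda$ of $W$ with a nonzero eigenvector $v=(v_1,\dots,v_n)^\intercal$, and choose an index $i$ with $|v_i|=\max_k|v_k|$; since $v\neq 0$ we have $|v_i|>0$. Reading off the $i$-th coordinate of $Wv=\lambda v$ gives $(\lambda-w_{i,i})v_i=\sum_{j\neq i}w_{i,j}v_j$. Taking absolute values and using $|v_j|\leq|v_i|$ for every $j$ yields $|\lambda-w_{i,i}|\,|v_i|\leq |v_i|\sum_{j\neq i}|w_{i,j}|$, and dividing by $|v_i|>0$ gives $|\lambda-w_{i,i}|\leq\sum_{j\neq i}|w_{i,j}|$, i.e. $\lambda$ lies in the $i$-th row disk. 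Transposing back, this says precisely that every eigenvalue of $U$ lies in one of the disks described in the statement.

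For the final sentence I would argue by contraposition. If $U$ were singular, then $0$ would be an eigenvalue, hence would lie in some disk, which would force $|u_{j,j}|=|0-u_{j,j}|\leq\sum_{i\neq j}|u_{i,j}|$ for that index $j$; this contradicts the hypothesis that the strict inequality $|u_{j,j}|>\sum_{i\neq j}|u_{i,j}|$ holds for \emph{every} $j$. Therefore $0$ is not an eigenvalue and $U$ is invertible.

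There is essentially no genuine obstacle here, since this is a textbook result; the only points requiring care are the bookkeeping of rows versus columns (the paper's convention is the transpose of the usual one, so one must either pass to $U^\intercal$ as above or rewrite the coordinate identity accordingly) and the choice of the index $i$ maximizing $|v_i|$, which is what legitimizes the division by $|v_i|$. In the intended application (\propref{prop:full_rank}) the matrix $U$ will be the matrix $A$ of cosines of angles, whose off-diagonal column sums are controlled by \lemref{lem:angles} and \corref{cor:sum_angles}, so only the column-sum form proved here is needed.
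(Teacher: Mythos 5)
Your proof is correct and uses the same maximum-component-plus-triangle-inequality idea that the paper sketches. The paper only outlines the last sentence and does so directly in the column convention, avoiding your transpose step: for a non-zero $x$ with $|x_j|$ maximal, the $j$-th entry of $x^\intercal U$, namely $\sum_i x_i u_{i,j}$, is non-zero because $|x_j u_{j,j}| > |x_j|\sum_{i\neq j}|u_{i,j}| \geq \big|\sum_{i\neq j}x_i u_{i,j}\big|$, so $U^\intercal$ (hence $U$) is injective; your route goes through the full eigenvalue localization and then specializes to $\lambda=0$, which is equivalent and just slightly longer.
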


The last sentence of the theorem is quite transparent: if $x\in \R^n$ is non-zero and $x_j$ is its largest entry in absolute value, then $x$ times the $j$-th column of $U$ is non-zero by the triangle inequality.

We apply this criterion to the matrix $A + J$ where $A$ is the matrix of cosines of angles from the $a$-curves to the $b$-curves in the ring $R(n,t_n)$ and $J$ is the $2n \times 2n$ matrix whose entries are all equal to one.

\begin{lem}
The matrix $A+J$ is invertible for any $n\geq 1$.
\end{lem}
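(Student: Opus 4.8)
The plan is to apply the Gershgorin circle theorem to $A+J$, using the angle estimate of \lemref{lem:angles} to see that each Gershgorin disk avoids the origin. The point is that $A+J$ has a strictly dominant diagonal in the sense of Gershgorin once one adds $1$ to every entry of $A$: the single large positive entry in each column of $A$ (the diagonal entry $\cos\phi_1$) stays positive, while the entries close to $-1$ become small and nonnegative, and Lemma~\ref{lem:angles} says they are collectively outweighed by the diagonal term.

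First I would record the structure of $A$. By the rotational symmetry $\eta$ of the ring $R(n,t_n)$ (a translation along $e$ cyclically shifting the indices of the $a$- and $b$-curves, and orientation-preserving, hence preserving counter-clockwise angles), the matrix $A=(\cos\angle(a_i,b_j))_{i,j}$ is circulant; it is in fact symmetric, since $\cos\phi_j=\cos\phi_{2n+2-j}$ by \eqref{eqn:angles}. Its first column is $(\cos\phi_1,\dots,\cos\phi_{2n})^\intercal$, and all of its diagonal entries equal $\cos\phi_1$ — exactly the pattern visible in the displayed $n=3$ example. Consequently, the $j$-th column of $A+J$ has diagonal entry $\cos\phi_1+1$, while its off-diagonal entries are $\cos\phi_2+1,\dots,\cos\phi_{2n}+1$ in some order. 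Each of these quantities is nonnegative because $\cos\phi_k\ge -1$, so no absolute values are needed when applying Gershgorin.

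Now I would invoke \lemref{lem:angles}, which states precisely that
\[
\sum_{k=2}^{2n}(\cos\phi_k+1) < \cos\phi_1+1 .
\]
The left-hand side is the sum of the moduli of the off-diagonal entries of the $j$-th column of $A+J$, and the right-hand side is the modulus of its diagonal entry; moreover, since the left-hand side is nonnegative, the inequality forces $\cos\phi_1+1>0$. Hence, for every $j$, the $j$-th Gershgorin disk of $A+J$ is centred at the positive real number $\cos\phi_1+1$ and has radius strictly smaller than $\cos\phi_1+1$, so it does not contain $0$. By the Gershgorin circle theorem, $0$ is not an eigenvalue of $A+J$, i.e.\ $A+J$ is invertible. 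The argument is uniform in $n\ge 1$; the case $n=1$ is just the $2\times 2$ matrix with diagonal entries $1$ and off-diagonal entries $1-\sqrt{2}/2$, and \lemref{lem:angles} applies there as well.

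The only step requiring any care is the bookkeeping in the second paragraph: confirming that in every column of $A$ the diagonal entry is $\cos\phi_1$ and the remaining entries are a permutation of $\cos\phi_2,\dots,\cos\phi_{2n}$. This is immediate from the circulant structure of $A$, which itself merely records that an isometry of the ring cyclically permutes the systoles $a_1,\dots,a_{2n}$ and $b_1,\dots,b_{2n}$ with equal shifts. Given \lemref{lem:angles}, there is no genuine obstacle.
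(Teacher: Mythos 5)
Your proposal is correct and follows essentially the same route as the paper: both apply the Gershgorin circle theorem to $A+J$, using the circulant structure $A_{i+1,j+1}=A_{i,j}$ to reduce to the first column and then invoking Lemma~\ref{lem:angles} to show that the diagonal entry dominates the sum of the off-diagonal entries, all of which are nonnegative. The only cosmetic difference is that you also remark on the symmetry $\cos\phi_j=\cos\phi_{2n+2-j}$ and spell out the $n=1$ case, neither of which changes the argument.
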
 
\begin{proof}
Note that the entries of $A+J$ are positive so we do not need to take absolute values. \lemref{lem:angles} shows that the first entry of the first column of $A+J$ is larger than the sum of the other entries in that column. By symmetry of the ring, the entries of $A$ satisfy $A_{i+1,j+1} = A_{i,j}$ where the indices are taken modulo $2n$, and similarly for $A+J$. The Gershgorin circle theorem thus implies that $A+J$ is invertible.   
\end{proof}

It is easy to deduce that $A$ itself is invertible.

\begin{lem}
The matrix $A$ of cosines of angles from the $a$-curves to the $b$-curves in the ring $R(n,t_n)$ is invertible for any $n\geq 1$.
\end{lem}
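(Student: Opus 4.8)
The plan is to realize $A$ as a rank-one perturbation of the matrix $B := A+J$, which the previous lemma shows is invertible, and then to control that perturbation using only the circulant symmetry of $A$ together with \corref{cor:sum_angles}. Write $J = \mathbf{1}\mathbf{1}^\intercal$, where $\mathbf{1}\in\R^{2n}$ is the all-ones column vector, so that $A = B - \mathbf{1}\mathbf{1}^\intercal$. First I would record that the relation $A_{i+1,j+1}=A_{i,j}$ (indices modulo $2n$) makes $A$ a circulant matrix; hence every row and every column of $A$ has the same sum, namely
\[
s \;=\; \sum_{j=1}^{2n} A_{j,1} \;=\; \sum_{j=1}^{2n}\cos\phi_j ,
\]
using $A_{j,1}=\cos\angle(a_j,b_1)=\cos\phi_j$. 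Equivalently, $\mathbf{1}$ is an eigenvector of $A$ with eigenvalue $s$, and therefore $B\mathbf{1} = A\mathbf{1}+J\mathbf{1} = (s+2n)\mathbf{1}$.

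Next, since $B$ is invertible, the scalar $s+2n$ is a nonzero eigenvalue of $B$ (so no division-by-zero issue arises) and $B^{-1}\mathbf{1} = (s+2n)^{-1}\mathbf{1}$. The matrix determinant lemma then yields
\[
\det A \;=\; \det\!\big(B - \mathbf{1}\mathbf{1}^\intercal\big) \;=\; \det(B)\Big(1 - \mathbf{1}^\intercal B^{-1}\mathbf{1}\Big) \;=\; \det(B)\left(1 - \frac{2n}{s+2n}\right) \;=\; \det(B)\,\frac{s}{s+2n},
\]
so $A$ is invertible if and only if $s\neq 0$. Finally, $s = \sum_{j=1}^{2n}\cos\phi_j\neq 0$ by \corref{cor:sum_angles}, which gives invertibility of $A$; invertibility of $M$ then follows at once since $M$ is block-diagonal with diagonal blocks $A$ and $-A^\intercal$.

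An equivalent route, if one prefers to bypass the determinant lemma, is spectral: $A$ and $B$ are both circulant, hence simultaneously diagonalized by the discrete Fourier basis of $\R^{2n}$; on the hyperplane $\mathbf{1}^\perp$ the operators $A$ and $B=A+J$ coincide (because $J$ annihilates $\mathbf{1}^\perp$), so $A|_{\mathbf{1}^\perp}$ inherits invertibility from $B$, while on the line $\R\mathbf{1}$ the eigenvalue of $A$ is $s\neq 0$. Either way, the only input beyond elementary linear algebra is \corref{cor:sum_angles}, already established. I do not expect a genuine obstacle here; the one point deserving a moment's care is checking that the common row/column sum of the circulant $A$ is exactly $\sum_{j=1}^{2n}\cos\phi_j$ and that invertibility of $B$ automatically excludes the degenerate case $s+2n=0$.
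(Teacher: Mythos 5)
Your proof is correct. Your primary argument via the matrix determinant lemma is a genuinely different route from the paper's: you write $A = B - \mathbf{1}\mathbf{1}^\intercal$ and compute $\det A = \det(B)\cdot s/(s+2n)$ outright, so that invertibility of $A$ becomes manifestly equivalent to $s=\sum_j\cos\phi_j\neq 0$, which is exactly \corref{cor:sum_angles}. The paper instead argues by surjectivity: it decomposes $\R^{2n} = V\oplus\spann u$ with $V=u^\perp$, observes that $A$ and $A+J$ agree on $V$ (since $J|_V=0$) and both map $\spann u$ onto itself (nonzero constant row sums, by circulant symmetry and \corref{cor:sum_angles}), and concludes $A(\R^{2n}) = (A+J)(\R^{2n}) = \R^{2n}$. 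Your second, ``equivalent route'' is essentially this same argument dressed in spectral language. Both approaches use the same two inputs---the circulant structure and \corref{cor:sum_angles}---but your determinant-lemma version has the small advantage of producing an explicit formula for $\det A$ and making the equivalence with $s\neq 0$ an ``if and only if,'' while the paper's is a one-line surjectivity count that avoids any division. One minor point worth stating explicitly in either version is that $A$ maps $\mathbf{1}^\perp$ into itself; this follows because $A$ also has constant \emph{column} sums equal to $s$ (again by the circulant relation $A_{i+1,j+1}=A_{i,j}$), so $A^\intercal\mathbf{1}=s\mathbf{1}$.
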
 
\begin{proof}
Let $V$ be the orthogonal complement of the vector $u=(1,\ldots,1)^\intercal$ in $\R^{2n}$. Since the restriction of $J$ to $V$ is equal to zero, $A$ and $A+J$ act the same way on $V$. Moreover, $A$ and $A+J$ both send the span of $u$ onto itself since they have constant non-zero row sums. The row sums are all the same because the rows are cyclic permutations of one another. The row sums of $A+J$ are non-zero because its entries are positive and the row sums of $A$ are non-zero by \corref{cor:sum_angles}. Since $A+J$ is surjective by the previous lemma, we obtain
\[
\R^{2n} = (A+J)(\R^{2n}) = (A+J)(V+\spann u) = A(V + \spann u) = A (\R^{2n}).
\]
We conclude that $A$ itself is surjective, hence invertible. 
\end{proof}

This implies that the full matrix $M$ of cosines of angles between all systoles in the ring $R(n,t_n)$ is invertible.

\begin{proof}[Proof of \propref{prop:full_rank}]
We have $\det(M)=\det(A)^2 \neq 0$ by the previous lemma.
\end{proof}

\subsection{From rings to closed surfaces}

Let $X=X(\Gamma)$ be a closed surface  obtained by gluing crosses $C(t_n)$ along a connected, finite, $n$-regular, signed graph $\Gamma$ of sufficiently large girth as in \thmref{thm:systoles_closed_surface}, so that its systoles are the $a$-, $b$- and $c$-curves (and $f$-curves if $n=1$). We now prove that the lengths of these curves determine the surface in some neighborhood of $X$.

\begin{thm} \label{thm:lengths_determine}
Let $n \geq 1$ and let $X=X(\Gamma)$ where $\Gamma$ is a finite, connected, $n$-regular, signed graph of girth larger than $a(t_n)/\sigma(t_n)$. The map $\T(X) \to \R_+^S$ sending $Y$ to the vector of lengths $(\ell_\gamma(Y))_{\gamma \in S}$ where $S$ is the set of systoles in $X$ has injective derivative at the point $X$.
\end{thm}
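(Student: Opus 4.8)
The plan is to combine the per-ring statement of Corollary~\ref{cor:twists_generate} with Wolpert's length–twist duality. Recall that for a simple closed geodesic $\gamma$ on a closed hyperbolic surface and a tangent vector $v$ to its Teichm\"uller space, Wolpert's theorem expresses the differential of length through the twist field and the Weil–Petersson symplectic form $\omega_{\mathrm{WP}}$: one has $d\ell_\gamma(v)=c\,\omega_{\mathrm{WP}}(\tau_\gamma,v)$ for a non-zero universal constant $c$. Since $\omega_{\mathrm{WP}}$ is non-degenerate, the differentials $\{d\ell_\gamma:\gamma\in S\}$ span the cotangent space $T^*_X\T(X)$ if and only if the twist vector fields $\{\tau_\gamma:\gamma\in S\}$ span $T_X\T(X)$. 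So the derivative of $Y\mapsto(\ell_\gamma(Y))_{\gamma\in S}$ at $X$ is injective exactly when the systolic twist fields span $T_X\T(X)$, and it is this spanning statement that I would prove. In fact I would use only the twist fields around the $a$- and $b$-curves; the $c$-curves (and the $f$-curves in the case $n=1$) are merely extra systoles, not needed here.

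The key local input is Corollary~\ref{cor:twists_generate}: inside each ring $R\subset X$, the $4n$ twists around the $a$- and $b$-curves of $R$ form a basis of the tangent space to the Teichm\"uller space of $R$ with fixed boundary lengths. The $a$- and $b$-curves of $R$ lie in the interior of $R$, and twisting along such a curve in $X$ agrees with twisting along it in $R$, since twisting is supported in a collar. Hence, viewed inside $T_X\T(X)$, the span of the $a,b$-twist fields of $R$ is exactly the subspace spanned by the Fenchel–Nielsen coordinate vector fields $\partial_{\ell_c},\partial_{\tau_c}$ attached to the $2n$ interior curves $c$ of $R$ in an appropriate pants decomposition — the ``fixed boundary'' condition simply meaning that the Fenchel–Nielsen coordinates of $\partial R$ and of the complement of $R$ are held fixed.

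To globalise, I would fix the pants decomposition of $X$ made of all its $f$-curves together with one curve splitting each cross into two pairs of pants. Cutting $X$ along the $f$-curves returns the disjoint union of its crosses, so this is genuinely a pants decomposition ($3(g-1)$ curves, $2(g-1)$ pants). The decisive observation is that every curve of this decomposition is an interior curve of at least one ring: an $f$-curve separates two consecutive crosses of exactly one (``horizontal'') ring and is interior to it, while the splitting curve of a cross $C_e$ with $e=\{u,v\}$ lies in the interior of $C_e$, hence in the interiors of both $R_u$ and $R_v$; conversely, the $2n$ boundary curves of a ring are $f$-curves interior to its neighbouring rings, so the ``fixed boundary'' restriction in the previous paragraph is released once we also deform those neighbours. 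Taking the union over all rings of the subspaces above therefore produces every Fenchel–Nielsen coordinate direction of $X$, hence all of $T_X\T(X)$. A fortiori the twist fields around the systoles span, and by Wolpert duality the theorem follows. (A dimension count confirms the bookkeeping: $2(g-1)/n$ rings contribute $4n$ directions each with multiplicity $8(g-1)$, the $f$-curve directions counted once and the cross-curve directions counted twice, leaving $4(g-1)+2(g-1)=6(g-1)=\dim\T_g$ distinct directions.)

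The step I expect to require the most care is the bookkeeping in the last paragraph: pinning down the identification of each $T\T(R;\,\text{fixed boundary})$ with a coordinate subspace of $T_X\T(X)$, verifying that the $f$-curves and cross-splitting curves do form a pants decomposition cut out as claimed, and checking that every one of its curves is interior to some ring — in particular a careful treatment of how the two rings through a common cross share that cross's splitting curve, and of the fact that each $f$-curve is horizontal for precisely one ring. All the hyperbolic geometry is already packaged into Corollary~\ref{cor:twists_generate} (via Proposition~\ref{prop:full_rank} and the Gershgorin argument) and into Wolpert's theorem; what remains is a purely combinatorial–topological accounting of the gluing pattern of the crosses.
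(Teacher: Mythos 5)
Your proposal is correct and follows essentially the same route as the paper: Wolpert's twist--length duality reduces the claim to showing the twist fields of the $a$- and $b$-curves span $T_X\T(X)$, and this is globalised from Corollary~\ref{cor:twists_generate} using exactly the pants decomposition of $X$ by $f$-curves and cross-splitting curves, noting that each of those curves is interior to some ring. The only superficial differences are that you state duality via $\omega_{\mathrm{WP}}$ rather than the metric dual and append a dimension-count sanity check; neither changes the argument.
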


\begin{proof}
We need to show that the differentials $\{d\ell_{\gamma}\}_{\gamma \in S}$ span the cotangent space $T_X^* \T(X)$ over $\R$. Wolpert's twist-length duality \cite[Theorem 2.10]{WolpertDuality} states that
\[
d\ell_{\gamma} = \sqrt{-1} \left(\frac{\partial}{\partial \tau_\gamma} \right)^*
\]
for any simple closed geodesic $\gamma$, where the dual is taken with respect to the Weil--Petersson metric. Therefore, the  length differentials $\{d\ell_{\gamma}\}_{\gamma \in S}$ span the cotangent space $T_X^* \T(X)$ if and only if the twist deformations $\{\partial / \partial \tau_{\gamma}\}_{\gamma \in S}$ span the tangent space $T_X \T(X)$. 

By \propref{prop:systoles_bolza} (for $n=1$) and \thmref{thm:systoles_closed_surface} (for $n \geq 2$), the set $S$ of systoles includes the $a$- and $b$-curves. We will show that the twist deformations about the $a$- and $b$-curves generate the tangent space. To see this, observe that there exists a pants decomposition $\calP$ of $X$ consisting entirely of curves that are each in the interior of some ring $R \subset X$. For instance, one can take $\calP$ to be the set of all $f$-curves in $X$ (the curves that cut $X$ into crosses) together with one curve in each cross that separates it into two pairs of pants---call these $d$-curves. Each $d$-curve is in the interior of both rings that it intersects, while each $f$-curve is in the interior of a unique ring. 

The lengths and twists around the curves in the pants decomposition $\calP$ define Fenchel--Nielsen coordinates $\T(X) \to (\R_+ \times \R)^\calP$ once a convention  is chosen for what zero twist means. For any curve $\alpha \in \calP$, let $R\subset X$ be a ring that contains $\alpha$ in its interior. By \corref{cor:twists_generate}, the twist deformations about the $a$- and $b$-curves in $R$ generate the tangent space to the Teichm\"uller space of $R$ with fixed boundary lengths. In particular, the two tangent vectors corresponding to changing the length or twist parameter of $\alpha$ at unit speed while keeping all the other Fenchel--Nielsen coordinates fixed are in the span of the twist deformations around the 
$a
$- and $b$-curves in $X$. Since the Fenchel--Nielsen length and twist parameters define a smooth coordinate system for $\T(X)$, we are done.     
\end{proof}

\begin{remark}
The proof actually shows that the derivative of the vector of lengths of all the $a$-curves and $b$-curves is injective at $X(\Gamma)$. The $c$-curves are not needed for this; they only play a role in the next section.  
\end{remark}

\begin{remark}
In \cite{SchmutzGerman}, Schmutz Schaller describes a collection of $(6g-5)$ curves such that their lengths define a topological embedding of Teichm\"uller space into $\R_+^{6g-5}$. See also \cite{HamParam1,HamParam2}. If $g$ is the genus of $X(\Gamma)$, then there are $(8g-8)$ curves of type $a$ or $b$ in $X(\Gamma)$. We do not know if their lengths define a global embedding of Teichm\"uller space, but \thmref{thm:lengths_determine} in conjunction with the inverse function theorem implies that they define an embedding in a neighborhood of $X(\Gamma)$.  
\end{remark}

\section{The systole decreases under all deformations} \label{sec:minimality}

Let $X=X(\Gamma)$ where $\Gamma$ is a signed graph satisfying the hypotheses of \thmref{thm:lengths_determine}. Now that we know that the systoles in $X$ can detect any infinitesimal movement, it remains to show that at least one of them shrinks under any infinitesimal deformation. Even though we have proved that the twist deformations around the $a$-curves and $b$-curves in $X$ generate the tangent space $T_X \T(X)$, it will be convenient to use a different basis to show this.

To define this other basis, we first explain how it acts on individual crosses. Let $C$ be a cross with four boundary lengths equal to $4t_n$ as in \secref{sec:cross}. For each boundary component $\beta \subset C$ and $s > 0$, we define the deformed cross $C^\beta_s$ to be the four-holed sphere with $\beta$-boundary of length $4(t_n+s)$, the three other boundaries of length $t_n$, and with the same symmetries fixing $\beta$ that $C$ has, that is, a $\Z_2 \times \Z_2$ group generated by a front-to-back reflection and a left-and-right or top-to-bottom reflection depending on whether $\beta$ is medial or lateral respectively. 

For example, if $\beta$ is the left boundary component of $C$, then $C^\beta_s$ is obtained by taking a right-angled hexagon with left side $t_n+s$, top side of length $2t_n$ and right side of length $t_n$, then reflecting this hexagon across its bottom side to obtain a right-angled octagon, then doubling this octagon across the four sides with unspecified lengths (see \figref{fig:deform}).

\begin{figure}[htp]
\centering
{\includegraphics{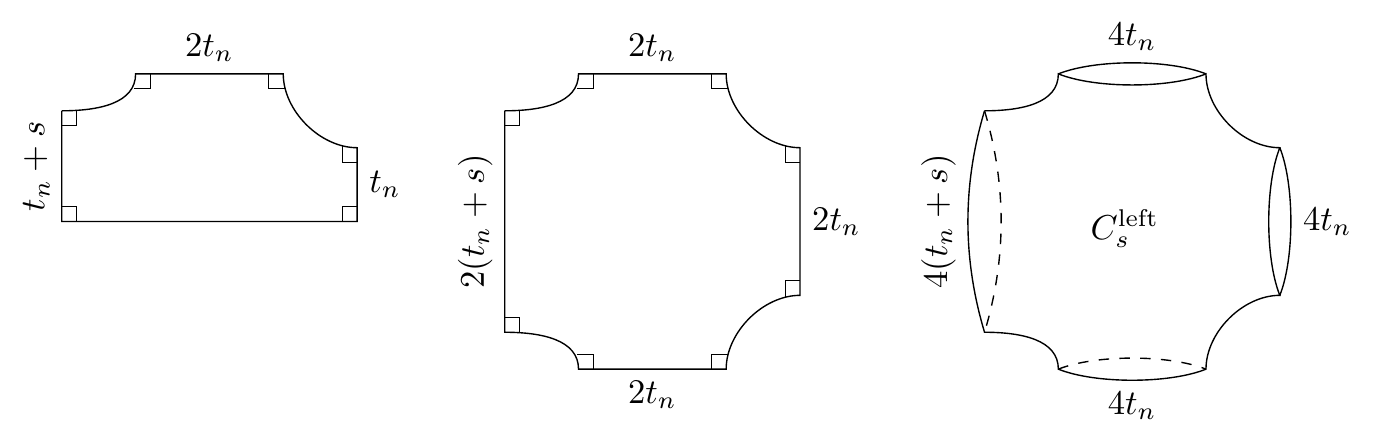}}
\caption{The length deformation of the cross about its left boundary}\label{fig:deform}
\end{figure}

Now if $\beta \subset X$ is any $f$-curve and $s>0$, then we define $X^\beta_s$ to be the same as $X$ but with the two crosses $C$ and $D$ adjacent to $\beta$ replaced with $C^\beta_s$ and $D^\beta_s$. These are glued together and with the other crosses in the most obvious way, without twisting. Finally, we let
\[
\lambda_\beta = \left.\frac{d}{ds}\right|_{s=0} X^\beta_s
\]
and call this the \emph{symmetric length deformation about $\beta$}. 

The motivation behind this construction is that the only canonical way to change the length of a curve on a surface is to flow along its gradient with respect to the Weil--Petersson metric. However, this gradient deformation is non-local in nature and its effect on the lengths of other curves (especially disjoint ones) is complicated to compute, although an explicit formula analogous to the cosine formula \eqref{eqn:wolpert} exists \cite[Equation (7)]{Riera}. 

The advantage of our symmetric length deformations is that the sum 
\[ \Lambda = \sum_{\beta \in \{ f\text{-curves}\}} \lambda_\beta\]
corresponds to expanding all the boundaries of all the crosses in $X$ at the same rate without twisting and while preserving the symmetries of all the crosses. In other words, the effect of $\Lambda$ is the same as increasing the parameter $t$ at unit speed in the definition of the ring $R(n,t_n)$. In particular, for any $a$- or $b$-curve $\alpha$ and for any $c$-curve $\gamma$ in $X$ we have
\begin{equation} \label{eqn:sign}
\frac{\partial \ell_\alpha}{\partial \Lambda} = a'(t_n)>0 \quad \text{and} \quad \frac{\partial \ell_\gamma}{\partial \Lambda} = c'(t_n) <0
\end{equation}
according to \lemref{lem:a_increases} and \lemref{lem:c_decreases} respectively.

To complete our basis for the tangent space $T_X \T(X)$ we also include the twist deformations around the $f$-curves as well as two more twist deformations $\tau_d$ and $\tau_h$ per cross. In the cross $C$, we pick the curve $d$ to be one of the two diagonal axes of symmetry and $h$ to be the curve depicted in \figref{fig:curves_in_cross}.

\begin{figure}[htp]
\centering
{\includegraphics{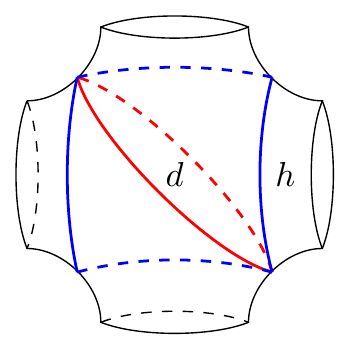}}
\caption{The curve $d$ (in red) and the curve $h$ (in blue) in the cross $C$}\label{fig:curves_in_cross}
\end{figure}

Observe that $d$ and $h$ intersect twice. Moreover, the oriented angles from $d$ to $h$ at the two intersection points are equal to each other since the rotation of angle $\pi$ about the centers of the front and back of the cross leaves each curve invariant, preserves orientation, and exchanges the two intersection points. Finally, the angle of intersection $\psi = \angle(d,h)$ is different from $\pi/2$ since the intersections occur at the midpoints of two opposite seams, and the seams are orthogonal to $d$ at those points. 

Let $L,R,T,B$ be the left, right, top and bottom boundaries of the cross respectively. The matrix of partial derivatives of the lengths of $\{L,R,T,B,d,h\}$ with respect to the deformations $\{\lambda_L,\lambda_R,\lambda_T,\lambda_B,\tau_h,\tau_d\}$ of the cross $C$ has the form
\[
\begin{blockarray}{ccccccc}
 & \lambda_L & \lambda_R & \lambda_T & \lambda_B & \tau_h & \tau_d \\
\begin{block}{c(cccccc)}               
d\ell_L & 1 & 0 & 0 & 0 & 0 & 0\\
d\ell_R & 0 & 1 & 0 & 0 & 0 & 0\\
d\ell_T & 0 & 0 & 1 & 0 & 0 & 0\\
d\ell_B & 0 & 0 & 0 & 1 & 0 & 0\\
d\ell_d & \delta & \delta & \delta & \delta &  2\cos \psi & 0\\
d\ell_h & \eps & \eps & \eps & \eps & 0 &  -2\cos \psi \\
\end{block}
\end{blockarray}
\]
for some $\delta, \eps \in \R$. It is lower triangular with non-zero diagonal entries, hence invertible. In particular, the deformations $\{\lambda_L,\lambda_R,\lambda_T,\lambda_B,\tau_h,\tau_d\}$ form a basis of the tangent space to the Teichm\"uller space of $C$ with variable boundary lengths.

\begin{lem}
The symmetric length deformations $\{ \lambda_\beta \}_{\beta \in \{ f\text{-curves}\}}$ together with the twist deformations $\{ \tau_\beta \}_{\beta \in \{ f\text{-curves}\}}$ and $\{ \tau_{d(C)} , \tau_{h(C)} \}_{C \in \{\text{crosses}\}}$ form a basis of $T_X\T(X)$.
\end{lem}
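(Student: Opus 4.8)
The plan is to reduce the statement to the invertibility of the $6\times 6$ matrix just displayed, treating one cross at a time by means of the operation of cutting $X$ along all of its $f$-curves.

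First I would verify the dimension count. Since $\Gamma$ is $n$-regular with $E$ edges, $X=X(\Gamma)$ is built from $E$ crosses and has genus $g=E+1$, so $\dim \T(X)=6g-6=6E$. Each cross carries four $f$-curves on its boundary and each $f$-curve bounds two crosses, so $X$ has $2E$ $f$-curves. Hence the proposed list contains $2E$ symmetric length deformations $\lambda_\beta$, $2E$ twist deformations $\tau_\beta$ about $f$-curves, and $2E$ twist deformations of type $\tau_d$ or $\tau_h$ (two per cross), for a total of $6E = \dim \T(X)$. It therefore suffices to prove linear independence.

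Next I would introduce the linearized cut map
\[
\Phi\colon T_X\T(X) \longrightarrow \bigoplus_{C} T_C\T\big(C,\ \text{variable boundary lengths}\big),
\]
the derivative of the map that cuts $X$ along its $f$-curves into the disjoint union of the crosses $C$. Three features of $\Phi$ are what make the argument work: a Fenchel--Nielsen twist about an $f$-curve does not change the marked structure of any cross, so $\Phi(\tau_\beta)=0$ for every $f$-curve $\beta$; by construction $\lambda_\beta$ only alters the two crosses $C,D$ adjacent to $\beta$, so $\Phi(\lambda_\beta)$ lies in $T_C\T(C)\oplus T_D\T(D)$ with components the symmetric length deformations $\lambda^C_\beta,\lambda^D_\beta$ of those crosses about the boundary $\beta$; and, since $d$ and $h$ lie in the interior of a single cross $C$, the vectors $\Phi(\tau_{d(C)})$ and $\Phi(\tau_{h(C)})$ lie in the single summand $T_C\T(C)$, where they are the deformations $\tau^C_d$ and $\tau^C_h$.

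Then I would conclude as follows. Given a vanishing linear combination of the $6E$ deformations, write it as $u+v=0$ where $u$ gathers the terms coming from $f$-twists and $v$ the remaining $4E$ terms; applying $\Phi$ and using $\Phi(u)=0$ gives $\Phi(v)=0$. Projecting $\Phi(v)$ onto the summand $T_C\T(C)$ for a fixed cross $C$ yields a linear relation among exactly the six deformations $\{\lambda^C_L,\lambda^C_R,\lambda^C_T,\lambda^C_B,\tau^C_h,\tau^C_d\}$ of that cross with variable boundary lengths, which the invertibility of the $6\times 6$ matrix above forces to be trivial. Letting $C$ range over all crosses kills every coefficient occurring in $v$ (each $f$-curve bounds some cross), so $v=0$ and hence $u=0$; since the twists about the $f$-curves form part of a Fenchel--Nielsen coordinate system for $\T(X)$ (the $f$-curves extend to a pants decomposition upon adjoining the $d$-curves), they are linearly independent, so the remaining coefficients vanish as well. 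The only thing demanding care is the bookkeeping for $\Phi$ — pinning down precisely that $\lambda_\beta$ is supported on two crosses and that $\tau_d,\tau_h$ are supported on one — after which the previously established $6\times 6$ invertibility does all the substantive work; I do not expect a genuine obstacle here.
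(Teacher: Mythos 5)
Your proof is correct and rests on the same two ingredients as the paper's: the dimension count $6E=\dim \T(X)$ and the invertibility of the displayed $6\times 6$ matrix, applied cross by cross after cutting along the $f$-curves. The only difference is one of direction---the paper argues spanning (the six deformations generate every deformation of each cross, hence the Fenchel--Nielsen deformations for the pants decomposition by $f$- and $d$-curves), while you prove linear independence via the cut map $\Phi$---and your version spells out bookkeeping that the paper's terse three-sentence proof leaves implicit.
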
  
\begin{proof}
It suffices to prove that these deformations span the tangent space since they are equal in number to its dimension. By the paragraph preceding the statement of this lemma, these deformations generate all the deformations of any cross in $X$. In particular, they generate the Fenchel--Nielsen length and twist deformations with respect to the pants decomposition of $X$ by $f$-curves and $d$-curves.
\end{proof}

We can now prove that the systole function decreases under all non-trivial deformations of $X$, hence that $X$ is a local maximum of $\sys$. This is a restatement of \thmref{thmA} from the introduction.

\begin{thm} \label{thm:local_max}
Let $n \geq 1$ and let $X=X(\Gamma)$ where $\Gamma$ is a finite, connected, $n$-regular, signed graph of girth larger than $a(t_n)/\sigma(t_n)$. Then for every non-zero tangent vector $v \in T_X \T(X)$, there is at least one systole $\alpha$ of $X$ such that $d \ell_\alpha (v) < 0$. That is, $X$ is a local maximum of the systole function.
\end{thm}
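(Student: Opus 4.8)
The plan is to prove the stronger statement that the origin lies in the interior of the convex hull of the set of length differentials $\{d\ell_\alpha : \alpha \in S\}$ inside the cotangent space $T_X^*\T(X)$. This suffices: suppose we have produced strictly positive numbers $c_\alpha$, $\alpha \in S$, with $\sum_{\alpha \in S} c_\alpha\, d\ell_\alpha = 0$. Given any nonzero $v \in T_X\T(X)$, \thmref{thm:lengths_determine} guarantees that the differentials $\{d\ell_\alpha\}_{\alpha \in S}$ span $T_X^*\T(X)$, so they cannot all annihilate $v$; thus $d\ell_\alpha(v) \neq 0$ for some $\alpha$. Since $\sum_\alpha c_\alpha\, d\ell_\alpha(v) = 0$ with every $c_\alpha > 0$ and not every term vanishing, at least one term must be negative, i.e.\ some systole $\alpha$ satisfies $d\ell_\alpha(v) < 0$. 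So everything reduces to exhibiting such a positive linear relation among the differentials of the systole lengths.

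To find the coefficients I would use the fact that $X$ is covered by the tree of rings $\tree(n)$, which carries an abundant isometry group (the \emph{virtual symmetries} of $X$), and that the systoles of $X$ fall into essentially three classes that are permuted among themselves by these symmetries: $a$-curves, $b$-curves and $c$-curves, with the reflections $\rho_{f_j}$ — which are global isometries of $\tree(n)$ — conjugating $a$-curves to $b$-curves. This suggests the ansatz of a relation
\[
\xi \;:=\; w \sum_{\alpha \text{ of type } a \text{ or } b} d\ell_\alpha \;+\; \sum_{\gamma \text{ of type } c} d\ell_\gamma \;=\; 0
\]
for a single positive weight $w$ (when $n=1$ the $f$-curves are systoles too and must be included with their own weight; that case I would dispatch directly from the explicit description in \propref{prop:systoles_bolza}). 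To pin down $w$, pair $\xi$ with the total expansion vector $\Lambda = \sum_\beta \lambda_\beta$ defined above: by \eqnref{eqn:sign} each $a$- or $b$-curve contributes $a'(t_n) > 0$ and each $c$-curve contributes $c'(t_n) < 0$, so the requirement $\xi(\Lambda) = 0$ forces
\[
w \;=\; \frac{\#\{c\text{-curves}\}\cdot\bigl(-c'(t_n)\bigr)}{\#\{a\text{- and }b\text{-curves}\}\cdot a'(t_n)},
\]
which is a legitimate positive number precisely because of \lemref{lem:a_increases} and \lemref{lem:c_decreases}.

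It then remains to check that, with this value of $w$, the covector $\xi$ vanishes identically. By the basis lemma proved just above, it is enough to test $\xi$ against each symmetric length deformation $\lambda_\beta$, each twist $\tau_\beta$ about an $f$-curve, and each pair $\tau_{d(C)}, \tau_{h(C)}$ of twists in a cross $C$. For the twist deformations this is where the cosine formula \eqnref{eqn:wolpert} and the virtual symmetries do the work: one lifts the relevant curves to a ring or a pair of transverse rings of $\tree(n)$, where the explicit isometries $\rho_e$, $\rho_{\text{seams}}$, $\eta$ and $\nu_j$ act, identifies precisely which $a$-, $b$- and $c$-curves meet $\beta$ (resp.\ $d(C)$, $h(C)$) and at what oriented angles, and verifies that the signed contributions $\cos\angle_p$ cancel in symmetric pairs. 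For the $\lambda_\beta$ one uses instead that $\lambda_\beta$ alters the metric only in the two crosses adjacent to $\beta$, so that $d\ell_\alpha(\lambda_\beta) = 0$ for every systole disjoint from those two crosses, and hence $\xi(\lambda_\beta)$ depends only on the weighted collection of systole arcs passing through them; since this local configuration is isometric for every $f$-curve, $\xi(\lambda_\beta)$ equals a constant $\kappa$ independent of $\beta$, and then $0 = \xi(\Lambda) = \#\{f\text{-curves}\}\cdot\kappa$ forces $\kappa = 0$. Thus $\xi = 0$, which is the positive relation sought in the first paragraph.

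The main obstacle is the twist computation $\xi(\tau_\beta) = \xi(\tau_{d(C)}) = \xi(\tau_{h(C)}) = 0$. One must enumerate, for each of these curves, the finite list of $a$-, $b$- and $c$-curves crossing it — note that a $c$-curve winds half-way around a pair of transverse rings and can therefore meet crosses far from its center, so the large-girth hypothesis is needed to rule out accidental coincidences — and then see the cancellation of cosines. This is exactly the bookkeeping that the covering $\tree(n) \to X$ is designed to make uniform across all $n$ and all signed graphs $\Gamma$, in place of the case-by-case analysis of \cite{SchmutzMaxima}; the reflection $\rho_e$ across the core of a ring, which exchanges intersection points while replacing each oriented angle by its supplement, is the key mechanism producing the cancellations. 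A minor separate point is to make the weight $w$ and the various curve counts consistent in the exceptional case $n=1$, where the $f$-curves join the systoles.
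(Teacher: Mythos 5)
Your argument is correct and rests on exactly the same three observations as the paper's proof: (i) the virtual symmetries of the tree of rings force $\sum_{\alpha\in A} d\ell_\alpha(\tau_\gamma)=0=\sum_{\alpha\in C} d\ell_\alpha(\tau_\gamma)$ for every twist $\tau_\gamma$ in the basis; (ii) $\sum_{\alpha\in Q} d\ell_\alpha(\lambda_\beta)$ is independent of the $f$-curve $\beta$ for $Q=A$ or $Q=C$; and (iii) pairing with $\Lambda=\sum_\beta\lambda_\beta$ gives definite opposite signs for $A$ and $C$ via \lemref{lem:a_increases} and \lemref{lem:c_decreases}. What differs is the packaging. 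The paper expands an arbitrary $v$ with $d\ell_\alpha(v)\geq 0$ in the basis $\{\lambda_\beta,\tau_\gamma\}$ and extracts the contradiction $\sum_\beta\kappa_\beta=0$; you instead construct the vanishing strictly positive combination $\xi=w\sum_{a,b}d\ell_\alpha+\sum_c d\ell_\gamma=0$ by solving $\xi(\Lambda)=0$ for $w$, then verify $\xi$ kills each basis vector. The two are dual to one another and involve the same computations, so I would not call this a genuinely different proof, but your formulation makes the structure of the argument align cleanly with the classical eutaxy-plus-perfection criterion for local maxima: the positive relation $\xi=0$ is eutaxy and \thmref{thm:lengths_determine} is perfection. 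Your treatment of $n=1$ (fold the $f$-curves into the positive combination with their own weight, using that $f$-lengths also increase under $\Lambda$) is exactly what the paper does.
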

\begin{proof}
First assume that $n\geq 2$. Let $S$ be the set of systoles of $X$ and suppose that $v \in T_X \T(X)$ is such that $d \ell_\alpha (v) \geq 0$ for every $\alpha \in S$. We will show that $v=0$.

By the previous lemma, we can write
\[
v = \sum_{\beta \in F} \kappa_\beta \cdot \lambda_\beta + \sum_{\gamma \in D} \mu_\gamma \cdot \tau_\gamma
\]
for some $\kappa_\beta,\mu_\gamma \in \R$ where $F$ is the set of $f$-curves and $D$ is the set of all $f$-, $d$- and $h$-curves in $X$.

For every $\alpha \in S$ we have 
\[
0 \leq d\ell_\alpha(v) = \sum_{\beta \in F} \kappa_\beta \cdot d\ell_\alpha(\lambda_\beta) + \sum_{\gamma \in D} \mu_\gamma \cdot d\ell_\alpha(\tau_\gamma).
\]
Summing over any subset $Q \subset S$ we obtain
\[
0 \leq \sum_{\alpha \in Q} d\ell_\alpha(v) = \sum_{\beta \in F} \kappa_\beta \sum_{\alpha \in Q} d\ell_\alpha(\lambda_\beta) + \sum_{\gamma \in D} \mu_\gamma \sum_{\alpha \in Q} d\ell_\alpha(\tau_\gamma).
\]

Let $A$ be the set $a$- and $b$-curves in $X$ and let $C$ be the set of $c$-curves in $X$. The \textbf{first observation} is that if $Q$ is equal to either $A$ or $C$, then $\sum_{\alpha \in Q} d\ell_\alpha(\tau_\gamma)= 0$ for every $\gamma \in D$. Indeed, for every systole $\alpha \in Q$ intersecting $\gamma$, there is some systole $\alpha^* \in Q$ intersecting $\gamma$ with the supplementary angle. To see this, observe that in the tree of rings $\tree(n)$ there is an orientation-reversing isometry which sends (any lift of) $\gamma$ to itself and permutes the (lifts of) $a$- and $b$-curves, as well as the (lifts of) $c$-curves separately. For instance, if $\gamma$ is an $f$- or $d$-curve then the reflection of $\tree(n)$ across the seams works, and if $\gamma$ is an $h$-curve then the left-to-right reflection of the cross containing $\gamma$ extends to an isometry of $\tree(n)$. These reflections exchange $a$- and $b$-curves and preserve the set of $c$-curves. Hence the statement about angles coming in supplementary pairs holds in the tree of rings $\Sigma(n)$. Since $\tree(n)$ covers $X$ and every systole in $X$ is the image of a systole in $\tree(n)$, the statement holds in $X$ as well. The cosine formula \eqref{eqn:wolpert} thus implies that the total length variation of the curves in $Q$ is nil in the direction of $\tau_\gamma$. Thus
\begin{equation} \label{eqn:no_twist_term}
\sum_{\alpha \in A} d\ell_\alpha(v) = \sum_{\beta \in F} \kappa_\beta \sum_{\alpha \in A} d\ell_\alpha(\lambda_\beta) \quad \text{and} \quad \sum_{\alpha \in C} d\ell_\alpha(v) = \sum_{\beta \in F} \kappa_\beta \sum_{\alpha \in C} d\ell_\alpha(\lambda_\beta).
\end{equation}

The \textbf{second observation} is that the term
\[
\sum_{\alpha \in Q} d\ell_\alpha(\lambda_\beta)
\]
is independent of $\beta \in F$ when $Q$ is equal to either $A$ or $C$. Indeed, the deformation $\lambda_\beta$ only affects the lengths of systoles in pairs of transverse rings containing one of the two crosses adjacent to $\beta$. The geometry of the subsurface $Y \subset X$ containing all these pairs of transverse rings does not depend on $\beta$. This is because $Y$ is the union of the crosses corresponding to the edges of a subgraph $H \subset \Gamma$, namely, the $2$-neighborhood of a pair of consecutive edges in the cyclic order around a vertex (corresponding to the crosses meeting along $\beta$). Since $\Gamma$ is assumed to have girth larger than $a(t_n)/\sigma(t_n)$, and that number is bigger than $6$ (see Table \ref{table}), $H$ is a tree isometric to the $2$-neighborhood of \emph{any} pair of consecutive edges in $\Gamma$ (or in the $n$-regular tree). The resulting subsurface $Y$ and the total effect of the deformation $\lambda_\beta$ on the length of its $a$- and $b$-curves or its $c$-curves is therefore independent of $\beta$.

The \textbf{third and last observation} is that
\[
\sum_{\beta \in F} \left(\sum_{\alpha \in A} d\ell_\alpha(\lambda_\beta) \right)= \sum_{\alpha \in A} \left(\sum_{\beta \in F} d\ell_\alpha(\lambda_\beta)\right) = \sum_{\alpha \in A} d\ell_\alpha(\Lambda) = \sum_{\alpha \in A} a'(t_n) > 0
\]
by \eqnref{eqn:sign}, where $\Lambda = \sum_{\beta \in F} \lambda_\beta$. We deduce that $\sum_{\alpha \in A} d\ell_\alpha(\lambda_\beta)>0$ for any $\beta \in F$ by the second observation. Similarly,
\[
\sum_{\beta \in F} \left(\sum_{\alpha \in C} d\ell_\alpha(\lambda_\beta) \right)= \sum_{\alpha \in C} \left(\sum_{\beta \in F} d\ell_\alpha(\lambda_\beta)\right) = \sum_{\alpha \in C} d\ell_\alpha(\Lambda) = \sum_{\alpha \in C} c'(t_n) < 0
\]
so that $\sum_{\alpha \in C} d\ell_\alpha(\lambda_\beta) < 0$ for any $\beta \in F$.

For any fixed $\gamma \in F$ we have both
\[
0 \leq \sum_{\alpha \in A} d\ell_\alpha(v) = \left(\sum_{\beta \in F} \kappa_\beta\right) \left(\sum_{\alpha \in A} d\ell_\alpha(\lambda_\gamma) \right)
\]
and
\[
0 \leq \sum_{\alpha \in C} d\ell_\alpha(v) = \left(\sum_{\beta \in F} \kappa_\beta\right) \left(\sum_{\alpha \in C} d\ell_\alpha(\lambda_\gamma) \right)
\]
from \eqnref{eqn:no_twist_term} and the second observation. By the third observation, the sum in the rightmost parentheses is first positive then negative. We conclude that $\sum_{\beta \in F} \kappa_\beta = 0$ so that
$$
\sum_{\alpha\in A} d\ell_\alpha(v) = 0 = \sum_{\alpha\in C} d\ell_\alpha(v).
$$
Since each summand was assumed to be non-negative, they are all zero. By \thmref{thm:lengths_determine}, this implies that $v=0$.

If $n=1$, the same argument works with $A$ replaced by $F$. The point is that the $f$-curves are systoles in this case and their length increases under the deformation $\Lambda$.

It is easy to see that the first part of the theorem implies that $X$ is a local maximum of the systole function. For any unit vector $v \in T_X \T(X)$, the above implies that there is a curve $\alpha \in S$ and an $\eps_v > 0$ such that \[\sys(\exp_X(t v)) \leq \ell_\alpha(\exp_X(t v)) < \ell_\alpha(X) = \sys(X)\] for every $t \in (0,\eps_v)$, where $\exp_X(t v)$ is the point at distance $t$ from $X$ along the Weil--Petersson geodesic in the direction of $v$. Since the length functions $(\ell_\alpha)_{\alpha \in S}$ are conti\-nuous\-ly differentiable, $\eps_v$ can be chosen locally uniformly with respect to $v$. As the unit sphere in $T_X \T(X)$ is compact, there is an $\eps>0$ which works for all $v$. Hence there is a neighborhood $U$ of $X$ in $\T(X)$ such that $\sys(Y) \leq \sys(X)$ for every $Y \in U$ with equality only if $Y = X$. The same holds in moduli space.
\end{proof}

\section{Isometries are induced by graph isomorphisms} \label{sec:isometries}

In this section, we show that distinct signed graphs $\Gamma$ give rise to distinct hyperbolic surfaces $X(\Gamma)$. As a byproduct, we get that if the underlying graph has no non-trivial automorphism, then the resulting surface has a trivial automorphism group.

We first need to distinguish between the different kinds of systoles in $X(\Gamma)$.

\begin{lem} \label{lem:a_diff_from_c}
Let $n \geq 2$ and let $\Gamma$ be an $n$-regular signed graph such that the systoles in $X(\Gamma)$ are the $a$-, $b$- and $c$-curves. Then the $a$- and $b$-curves in $X(\Gamma)$ intersect a different number of systoles than the $c$-curves. 
\end{lem}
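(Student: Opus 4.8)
The plan is to compute, for each type of systole, how many other systoles it meets, and to check that the counts for $a$- and $b$-curves differ from the count for $c$-curves. Since every systole in $X(\Gamma)$ lifts to a systole in the tree of rings $\tree(n)$ and the covering $\tree(n)\to X(\Gamma)$ is injective on each ring and each pair of transverse rings (by \thmref{thm:systoles_closed_surface} and its proof), it suffices to do the bookkeeping locally, inside a single ring or a single pair of transverse rings, where the large-girth hypothesis guarantees that distinct systoles in $X(\Gamma)$ which intersect a given systole $\alpha$ correspond to distinct systoles meeting a fixed lift of $\alpha$. So the whole argument reduces to combinatorics in $\tree(n)$, or even just in $R(n,t_n)$ together with one transverse partner.

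First I would count the systoles meeting a fixed $a$-curve, say $a_1$ in a ring $R$. Within $R$ itself, $a_1$ is disjoint from all other $a$-curves and from $e$, and meets each of the $2n$ curves $b_j$ exactly once (this is recorded in \figref{fig:curves} and used in \secref{sec:regularity}); it also meets the $c$-curves of the pairs of transverse rings attached along the two crosses that $a_1$ passes through, and one must count those intersections using the surgery description of the $c$-curves from \subsecref{subsec:the_ring} and \figref{fig:transverse_rings}. By the symmetry $\rho_{f_j}$ the $b$-curves have exactly the same intersection pattern, so $a$- and $b$-curves meet the same number $N_{ab}$ of systoles. Next I would count the systoles meeting a fixed $c$-curve $\gamma$ in a pair of transverse rings $R_v\cup R_w$: here $\gamma$ is obtained by surgery on the two $e$-curves, so it runs through the central cross and along portions of both $e$-curves, and one tallies its intersections with the $a$- and $b$-curves of $R_v$ and of $R_w$ and with the other three $c$-curves of the same pair (and, via the neighbouring crosses, possibly with $c$-curves of adjacent pairs). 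This gives a number $N_c$.

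The main point is then simply that $N_{ab}\neq N_c$. The cleanest way to see this without a delicate count is a parity or growth-rate argument: for instance, a $c$-curve is centrally symmetric under the rotation by $\pi$ of the cross it passes through (used already in \secref{sec:minimality}) and that involution swaps $a$- with $b$-curves, forcing the $a$-/$b$-curves met by $\gamma$ to come in equal numbers and the $c$-curves met by $\gamma$ to be permuted; combined with the fact that an $a$-curve meets all $2n$ of the $b$-curves in its own ring (a contribution with no analogue for a single $c$-curve, which meets only those $a$- and $b$-curves whose ring it traverses, i.e. those through the $n$ crosses of $R_v$ and the $n$ crosses of $R_w$), one gets that $N_{ab}$ and $N_c$ differ — and in fact one expects $N_{ab}<N_c$ or $N_{ab}>N_c$ consistently in $n$, which a short explicit computation in $\tree(n)$ will confirm. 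I would finish by noting that these counts are genuinely \emph{intrinsic}: an isometry of $X(\Gamma)$ permutes systoles and preserves the number of systoles each one meets, so it cannot carry an $a$- or $b$-curve to a $c$-curve.

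The step I expect to be the real obstacle is the careful enumeration of intersections with $c$-curves near the central cross: a $c$-curve travels along pieces of two $e$-curves, and one must be sure not to double-count, and must verify that the large-girth hypothesis prevents a $c$-curve from wrapping far enough to pick up unexpected extra intersections (so that the local count in $\tree(n)$ is the honest count in $X(\Gamma)$). Once that is pinned down, the inequality $N_{ab}\neq N_c$ is immediate, and with it the lemma.
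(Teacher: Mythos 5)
Your strategy is the same as the paper's: count, for a fixed $a$-/$b$-curve and for a fixed $c$-curve, the number of other systoles it meets, and show the two counts differ. The reduction to a local count in $\tree(n)$ (using that the cover is injective on a ring or a pair of transverse rings, plus large girth to rule out extra identifications) is also exactly what the paper relies on. So far so good.

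The genuine gap is that you do not actually do the count, and the shortcut you propose in its place does not prove the inequality. Your ``parity or growth-rate argument'' observes (a) that a symmetry of the cross permutes the systoles meeting a fixed $c$-curve, forcing the $a$- and $b$-curves among them to come in equal numbers, and (b) that an $a$-curve meets all $2n$ of the $b$-curves in its ring while a $c$-curve has ``no analogue'' of this contribution. Neither observation, nor their combination, yields $N_{ab}\neq N_c$: the fact that one particular contribution to $N_{ab}$ has no direct counterpart in the tally for $N_c$ says nothing about the totals, since the $c$-curve picks up its own contributions (for instance from the $2(n-1)$ non-central crosses and the $2(n-1)^2$ crosses one ring further out) that have no analogue in the $a$-curve tally. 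There is no parity to exploit here either; both counts turn out to be even. The paper's proof is precisely the ``delicate count'' you try to avoid: it finds $8n^2-4n$ systoles meeting a $c$-curve and $4n^2+2n$ meeting an $a$- or $b$-curve, and checks these quadratics disagree for all $n\geq 2$. Without that enumeration (or some genuinely structural replacement for it), the lemma is not proved. You correctly flag the enumeration near the central cross as the hard step, but flagging it is not the same as resolving it, and the rest of your argument does not circumvent it.

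One smaller caveat: the claim that a $c$-curve is invariant under the $\pi$-rotation of its central cross, and that this rotation swaps $a$- with $b$-curves, would need to be checked; the paper does use the front-to-back reflection $\rho_{\mathrm{seams}}$ to swap $a$- and $b$-curves elsewhere, but that reflection does not obviously preserve a given $c$-curve, and the $\pi$-rotation of the cross is a different map. Since your shortcut fails for the reason above anyway, this is secondary, but it is another place where the sketch would need to be tightened.
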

\begin{proof}

Let us count the number of systoles that intersect a given $c$-curve $\gamma$. In the pair of transverse rings $R_1 \cup R_2$ containing $\gamma$, there is a central cross at the intersection of the two rings and $2(n-1)$ non-central crosses. For each of the latter kind, $\gamma$ intersects only one side (front or back) of the cross, separating two opposite sides of that octagon. Thus for each non-central cross $C$, in the ring through $C$ distinct from $R_1$ and $R_2$, exactly half of the $a$- and $b$-curves intersect $\gamma$. This is because each $a$- and $b$-curve intersects only one side of each cross, connecting two opposite sides of that octagon. These curves contribute $2(n-1)\cdot 2n$ intersections. 

As for the systoles in $R_1$ or $R_2$, again half of them intersect $\gamma$. To see this, observe that any $a$- or $b$-curve is homotopic to a union of two geodesic segments: one that travels halfway along an $e$-curve and one that travel halfway along an $f$-curve. The $e$-curves are disjoint from $\gamma$ while each $f$-curve in $R_j$ intersects $\gamma$ once. Thus each $f$-curve in $R_j$ contributes one $a$-curve and one $b$-curve intersecting $\gamma$. This yields a total of $2n+2n=4n$ curves of type $a$ or $b$ in $R_1 \cup R_2$ that intersect $\gamma$.

How many $c$-curves intersect $\gamma$? We can first homotope any $c$-curve (including $\gamma$) to a union of two segments of $e$-curves. Each non-central cross in $R_1 \cup R_2$ is associated with four $c$-curves, half of which intersect $\gamma$. Indeed, when they are represented along the $e$-curves, any such $c$-curve $\zeta$ shares a segment $I$ with $\gamma$. At the extremities of $I$, the two curves $\gamma$ and $\zeta$ can turn toward either the same of different sides of $I$. In the first case we can homotope them to intersect only once while in the second we can homotope them to be disjoint. Thus there are $2 \cdot 2(n-1)$ curves of type $c$ that intersect $\gamma$ coming from the $2(n-1)$ non-central crosses in $R_1 \cup R_2$. 

Lastly, for each cross $C$ contained in a ring that intersects $R_1 \cup R_2$ such that $C$ is not itself contained in $R_1 \cup R_2$, we get two $c$-curves intersecting $\gamma$. There are $2(n-1)^2$ such crosses, accounting for $4(n-1)^2$ intersections.

Any other systole is disjoint from $\gamma$, being disjoint from $R_1 \cup R_2$. The total number of systoles intersecting $\gamma$ is thus
\[
4n(n-1)+4n+4(n-1)+4(n-1)^2 = 8n^2-4n.
\]

We now count the number of systoles that intersect a given $a$- or $b$-curve $\alpha$. In the ring $R$ where $\alpha$ lives, there are $2n$ systoles that intersect $\alpha$ apart from itself. 

Since $\alpha$ intersects each cross of $R$ in only one side (front or back) and separates two opposite sides of that octagon, it intersects exactly half of the $a$- and $b$-curves in each ring transverse to $R$. There are $n$ such rings, each contributing $2n$ intersections with $\alpha$.

These are all the $a$- and $b$-curves that $\alpha$ intersects. Now for the $c$-curves. By the above, $\alpha$ intersects half of the $c$-curves that intersect $R$. There are $n$ crosses per ring transverse to $R$, $n$ such rings, each contributing two $c$-curves that intersects $\alpha$, for a total of $2n^2$.

The number of systoles intersecting $\alpha$ is equal to
\[
2n+2n^2+2n^2 = 4n^2 +2n
\]
which is distinct from $8n^2 -4n$ for any $n\geq 2$ (the two real solutions are $0$ and $3/2$).
\end{proof}

The next step is to pick out pairs of $a$- and $b$-curves that are symmetric about the seams. In the notation of \subsecref{subsec:the_ring}, these are pairs $a_j$ and $b_j$ for some $j \in \{1, \ldots, 2n\}$. See \figref{fig:curves}.

\begin{lem} \label{lem:special_pair}
Let $n \geq 2$ and let $\Gamma$ be an $n$-regular signed graph. Then a pair of intersecting $a$- and $b$-curves in $X(\Gamma)$ maximizes the number of intersections with other $a$- and $b$-curves if and only if it is symmetric about the seams.
\end{lem}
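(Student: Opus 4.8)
\medskip

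The plan is to rephrase ``number of intersections with other $a$- and $b$-curves'' as a count of curves. Write the pair as $\{\alpha,\beta\}$ with $\alpha$ an $a$-curve and $\beta$ a $b$-curve; by hypothesis they intersect, necessarily exactly once by \lemref{lem:surgery}. By the proof of \lemref{lem:a_diff_from_c}, every $a$- or $b$-curve meets exactly $2n^2+2n$ other $a$- and $b$-curves; since distinct systoles cross at most once, the number of intersection \emph{points} of $\{\alpha,\beta\}$ with the remaining $a$- and $b$-curves is the constant $2(2n^2+2n-1)$, so the statement must concern the number of such curves that meet $\alpha$ or $\beta$. By inclusion--exclusion that number is $2(2n^2+2n-1)-C(\alpha,\beta)$, where $C(\alpha,\beta)$ counts the $a$- and $b$-curves, other than $\alpha,\beta$, that meet \emph{both}. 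Hence maximizing is the same as minimizing $C(\alpha,\beta)$, and it suffices to show $C(\alpha,\beta)=0$ when $\{\alpha,\beta\}$ is symmetric about the seams and $C(\alpha,\beta)>0$ otherwise.

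Next I would set up the local picture inside a single cross $C=P\cup Q$ (front octagon $P$, back octagon $Q$, glued along the four seams). Any $a$- or $b$-curve of a ring $R$ passing through a cross $C$ of $R$ does so along one geodesic arc contained in $P$ or in $Q$, joining the two boundary components of $C$ that are opposite from $R$'s point of view. Two such arcs, one joining the left/right boundaries and one joining the top/bottom boundaries, have interleaved endpoints on the octagon, hence cross exactly once when they lie in the same octagon and are disjoint otherwise. Moreover, a direct inspection of the octagon supports of the $a$-curves in \subsecref{subsec:the_ring} (the $n$ consecutive octagons $O_\ell,\dots,O_{\ell+n-1}$ traversed by $a_\ell$) shows that for every ring $R$ and every cross $C$ of $R$, exactly $n$ of the $a$-curves and $n$ of the $b$-curves of $R$ run through each octagon of $C$. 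Finally, a common neighbour of $\alpha$ and $\beta$ cannot belong to the ring of $\alpha$ or of $\beta$ (in a ring the $a$-curves are pairwise disjoint, as are the $b$-curves), so it lies in a ring transverse to those and meets $\alpha$ (resp.\ $\beta$) inside the cross shared with $\alpha$'s (resp.\ $\beta$'s) ring, necessarily in $\alpha$'s (resp.\ $\beta$'s) octagon of that cross.

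If $\{\alpha,\beta\}$ is symmetric about the seams, then $\beta=\rho_{\text{seams}}(\alpha)$ and both lie in one ring $R$. Since $\rho_{\text{seams}}$ exchanges the two octagons of every cross, $\alpha$ and $\beta$ run through \emph{opposite} octagons of every cross of $R$; a common neighbour would have to lie, in the shared cross, in $\alpha$'s octagon and in $\beta$'s octagon simultaneously, which is impossible. Hence $C(\alpha,\beta)=0$, the minimum. For the converse, suppose $\{\alpha,\beta\}$ is not symmetric. If $\alpha$ and $\beta$ lie in transverse rings $R_1\ni\alpha$, $R_2\ni\beta$, put $C=R_1\cap R_2$; as $\alpha$ and $\beta$ meet (necessarily in $C$) their arcs in $C$ share an octagon $P$, and then each of the $n$ $b$-curves of $R_1$ through $P$ is distinct from $\alpha,\beta$, meets $\alpha$ (all $a$- and $b$-curves of $R_1$ intersect) and meets $\beta$ (interleaved arcs in $P$), so $C(\alpha,\beta)\geq n>0$. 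If instead $\alpha=a_i$ and $\beta=b_j$ lie in a common ring $R$ with $i\ne j$, then $C(\alpha,\beta)$ equals $2n$ times the number of crosses of $R$ where $a_i$ and $b_j$ --- equivalently $a_i$ and $a_j$ --- traverse different octagons, so it suffices to exhibit one such cross. For this, record the octagon supports $S_\ell$ (arcs of $n$ consecutive octagons among the $2n$): they satisfy $S_i\ne S_j$ for $i\ne j$, and one checks that $S_i\triangle S_j$ is invariant under the antipodal involution of the $2n$-cycle; being nonempty, $S_i\triangle S_j$ contains a whole antipodal pair $\{O_m,O_{m+n}\}$, which is exactly a cross of $R$ in which $a_i$ and $a_j$ use different octagons. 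Hence $C(\alpha,\beta)>0$ in every non-symmetric case, and the lemma follows.

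I expect the main obstacle to be the combinatorial bookkeeping of the last case: pinning down which octagon each $a$-curve occupies in each cross of its ring, and deducing that distinct $a$-curves part ways in at least one cross. The remaining ingredients --- the inclusion--exclusion reduction, the one-crossing lemma for arcs in an octagon, and the symmetric case --- are essentially formal once the conventions of \secref{sec:systoles} are in place. A minor point requiring care is the exclusion of accidental tangencies of $a$- and $b$-curves of different rings along the seams, which one must rule out to keep the equalities $C(\alpha,\beta)=0$ and $C(\alpha,\beta)=2n\cdot m$ exact.
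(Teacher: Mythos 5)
Your proof is correct, and it is essentially a repackaging of the paper's argument via an inclusion--exclusion reformulation. The paper directly counts, in each of the three cases (symmetric in a common ring, non-symmetric in a common ring, transverse rings), how many $a$- and $b$-curves meet $\alpha\cup\beta$, obtaining $4n^2+4n$, at most $4n^2+2n$, and $4n^2+2$. You instead observe that since each $a$- or $b$-curve meets exactly $2n^2+2n$ others (the constant from the proof of \lemref{lem:a_diff_from_c}), the number of curves meeting $\alpha$ or $\beta$ equals $2(2n^2+2n-1)-C(\alpha,\beta)$, and you then show $C(\alpha,\beta)=0$ exactly in the symmetric case. This is a cleaner way to organize the same count: you never need the exact totals, only the vanishing or non-vanishing of $C(\alpha,\beta)$, and your antipodal-pair argument for $S_i\triangle S_j$ actually makes explicit a step that the paper only asserts (``there is some cross $C\subset R$ such that $\alpha\cup\beta$ intersects only one side of $C$'').

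One place where you should be more careful is the blanket claim, in the paragraph setting up the local picture, that ``a common neighbour of $\alpha$ and $\beta$ cannot belong to the ring of $\alpha$ or of $\beta$.'' The justification you give (disjointness of $a$-curves among themselves and of $b$-curves among themselves) only forces this when $\alpha$ and $\beta$ lie in the \emph{same} ring. When $\alpha\subset R_1$ and $\beta\subset R_2$ are in transverse rings, a $b$-curve of $R_1$ certainly meets $\alpha$ and may perfectly well meet $\beta$ inside $C=R_1\cap R_2$. In fact your own argument for that case exhibits exactly $n$ such common neighbours inside $R_1$, contradicting the stated claim. The proof still goes through because you only invoke the claim in the common-ring cases, where it is valid; but as written the paragraph reads as a general lemma and should be restricted to the case $R_1=R_2$. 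The worry you raise at the end about tangencies along the seams is not a real issue: the $a$- and $b$-curves are interior to their rings except at one vertex, and they meet the curves of a transverse ring transversally inside the octagons, so the crossing counts are exact.
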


\begin{proof}
Consider a pair $\alpha \cup \beta$ of $a$- and $b$-curves such that $\beta=\rho_\text{seams}(\alpha)$. What is special about this pair is that for each cross it intersects, it intersects both of its sides (front and back). Let $R$ be the ring containing $\alpha \cup \beta$. All the $4n$ systoles in $R$ intersect $\alpha \cup \beta$. Furthermore, all the $4n$ systoles in each of the $n$ rings transverse to $R$ intersect the pair $\alpha \cup \beta$. The total number of intersections is $4n^2+4n$.

Now suppose that $\alpha$ and $\beta$ are $a$- and $b$-curves contained in a common ring $R$ but are not symmetric about the seams. Then there is some cross $C \subset R$ such that $\alpha \cup \beta$ intersects only one side of $C$. Hence in the ring transverse to $R$ through $C$, only $2n$ systoles intersect $\alpha \cup \beta$, for a total of at most $4n^2+2n$ curves of type $a$ or $b$.

Finally, suppose that $\alpha$ and $\beta$ are not contained in a common ring. Let $R_1 \cup R_2$ be the pair of transverse rings containing them. In each ring transverse to $R_j$, there are $2n$ systoles that intersect $\alpha \cup \beta$ apart from $\alpha$ and $\beta$. Thus the number of $a$- and $b$-curves intersecting $\alpha \cup \beta$ is $2n \cdot n \cdot 2 + 2 = 4n^2 + 2$, which is less than $4n^2+4n$.
\end{proof}

We now have the required tools to prove that the map $\Gamma \mapsto X(\Gamma)$ is injective. We refer the reader back to \subsecref{subsec:signed_graph} for the definition of signed graphs and their isomorphisms, and to \subsecref{subsec:graph_to_surface} for the description of the map $\Gamma \mapsto X(\Gamma)$. 

\begin{thm} \label{thm:auto}
Let $n \geq 3$ and let $\Gamma_1$ and $\Gamma_2$ be $n$-regular signed graphs of girth larger than $a(t_n)/\sigma(t_n)$.  Any orientation-preserving isometry $X(\Gamma_1) \to X(\Gamma_2)$ is induced by a unique isomorphism of signed graphs $\Gamma_1 \to \Gamma_2$. 
\end{thm}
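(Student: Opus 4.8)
The plan is to recover, from the isometry alone, the entire combinatorial pattern underlying each surface---the crosses, the $f$-curves along which they are glued, the cyclic order of crosses in each ring, and the half-twist data---and then to observe that this pattern is exactly the signed graph.

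Let $\varphi\colon X(\Gamma_1)\to X(\Gamma_2)$ be an orientation-preserving isometry. Since the set of systoles and all geometric intersection numbers are isometry invariants, $\varphi$ restricts to an intersection-number--preserving bijection between the systoles of the two surfaces. By \thmref{thm:systoles_closed_surface} these systoles are exactly the $a$-, $b$- and $c$-curves. \lemref{lem:a_diff_from_c} shows that an $a$- or $b$-curve and a $c$-curve meet different numbers of systoles, so $\varphi$ must send $a$- and $b$-curves to $a$- and $b$-curves and $c$-curves to $c$-curves. Then \lemref{lem:special_pair} identifies the pairs of intersecting $a/b$-curves meeting the largest number of other $a/b$-curves as exactly the seam-symmetric pairs $\{a_j,b_j\}$, so $\varphi$ sends seam-symmetric pairs to seam-symmetric pairs.

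The technical core is to bootstrap this into a reconstruction of the crosses and their gluings purely from systoles and their intersection pattern. Every cross of $X(\Gamma)$ is the central cross of a unique pair of transverse rings and carries a distinguished packet of four $c$-curves (the four surgeries on the two $e$-curves of that pair). The point is to characterize this intrinsically: the set of systoles met by a given $c$-curve $\gamma$ recovers the pair of transverse rings containing $\gamma$ (for instance as the subsurface those systoles fill), so two $c$-curves lie in a common packet precisely when they recover the same pair of rings, and hence $\varphi$ induces a bijection between the crosses of $X(\Gamma_1)$ and those of $X(\Gamma_2)$. In the same spirit one reads off from intersection data among systoles: which pairs of crosses abut along a common $f$-curve (equivalently, which pairs of $c$-packets can be joined inside a single ring); the cyclic order of the crosses around each ring; and, for each gluing, whether it carries a half twist, detected by whether the relevant seam-symmetric pairs (or the seams themselves) close up after passing through it---precisely the dichotomy in \subsecref{subsec:graph_to_surface}. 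Using the large-girth hypothesis (which forces the relevant $2$-neighbourhoods to be trees, just as in the proofs above) to exclude long-range coincidences, the data assembled this way is a bijection $E(\Gamma_1)\to E(\Gamma_2)$ matching incidences, cyclic orders, and signs, i.e.\ an isomorphism of signed graphs, and the isometry it induces on $X(\Gamma_1)$ is $\varphi$.

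Uniqueness is then immediate: the edge bijection is forced, since $\varphi$ carries each cross of $X(\Gamma_1)$ to a definite cross of $X(\Gamma_2)$, the vertex bijection is forced in turn by the incidence pattern, and the ribbon-and-sign structure is rigid up to vertex flips. The main obstacle I anticipate is the third paragraph: pinning down \emph{intrinsic} intersection-number characterizations of ``same $c$-packet'', ``glued crosses'', ``cyclic neighbours in a ring'' and ``half-twisted gluing'' that do not presuppose the a~priori invisible cross decomposition, and then checking them. This is a careful but essentially combinatorial extension of the bookkeeping already carried out in \lemref{lem:a_diff_from_c} and \lemref{lem:special_pair}; the orientation-preserving hypothesis enters precisely to guarantee that the recovered cyclic orders are globally consistent rather than uniformly reversed.
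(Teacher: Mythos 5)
Your opening paragraph matches the paper exactly: Lemma~\ref{lem:a_diff_from_c} separates $a/b$-curves from $c$-curves, and Lemma~\ref{lem:special_pair} pins down the seam-symmetric pairs. From there, however, you take a genuinely different route, and the route you take has a gap that you yourself flag as ``the main obstacle''.

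The paper's proof, once it has that $\psi$ carries seam-symmetric pairs to seam-symmetric pairs, extracts the cross decomposition by a short \emph{geometric} observation rather than a combinatorial reconstruction: the two angle bisectors of a seam-symmetric pair $\{a_j,b_j\}$ at their intersection point lie along an $f$-curve and along the seams. Since an isometry preserves angle bisectors, and since the girth hypothesis ensures the $f$-curves have a different length from the seams (girth $>2$ suffices here), $\psi$ is forced to carry $f$-curves to $f$-curves and seams to seams. That immediately respects the decomposition into crosses, so the bijection on edges, the (possibly reversed) cyclic order around each vertex, and the signs are read off directly, and the vertex flips account for any order reversals.

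Your proposal instead tries to reconstruct the cross decomposition purely from intersection numbers among systoles: identify ``$c$-packets'' by which subsurface the systoles meeting a $c$-curve fill, then deduce adjacency of crosses, cyclic order in rings, and twist data from further intersection statistics. None of these intrinsic characterizations is actually established --- you explicitly defer them as the main difficulty. Whether that combinatorial program can be completed is not obvious (for instance, the set of systoles meeting a given $c$-curve is not obviously the same as the set of systoles contained in its pair of transverse rings, and detecting a half twist from intersection numbers alone would need a separate argument), and in any case it would be considerably longer and more delicate than what is needed. The missing idea is precisely the angle-bisector trick: once seam-symmetric pairs are preserved, you get $f$-curves and seams for free, without any further bookkeeping.
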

\begin{proof}
Let $\psi: X(\Gamma_1) \to X(\Gamma_2)$ be an orientation-preserving isometry. Then $\psi$ sends systoles of $X(\Gamma_1)$ to systoles of $X(\Gamma_2)$. By \lemref{lem:a_diff_from_c}, it sends the set of $a$- and $b$-curves on $X(\Gamma_1)$ to the set of $a$- and $b$-curves on $X(\Gamma_2)$. Furthermore, each pair of $a$- and $b$-curves in $X(\Gamma_1)$ that are symmetric about the seams in sent to a such a pair in $X(\Gamma_2)$ by \lemref{lem:special_pair}. 

The two angle bisectors of a symmetric pair of $a$- and $b$-curves at the intersection are along an $f$-curve and the seams. We may assume that the girth of $\Gamma_1$ and $\Gamma_2$ is larger than $2$ (see \subsecref{subsec:length}) so that the $f$-curves are distinguished from the seams. We conclude that $\psi$ sends $f$-curves to $f$-curves and seams to seams. In particular, it respects the decomposition of $X(\Gamma_1)$ and $X(\Gamma_2)$ into crosses. 

Let $E(\Gamma_j)$ be the set of edges of $\Gamma_j$. Since there is a bijection between the crosses in $X(\Gamma_j)$ and the edges in $\Gamma_j$, the isometry $\psi$ induces a bijection $\phi: E(\Gamma_1) \to E(\Gamma_2)$.  Since $\psi$ maps adjacent crosses to adjacent crosses, the induced map $\phi$ either preserves or reverses the cyclic order around each vertex. After applying a set of vertex flips to $\Gamma_2$ (which does not affect $X(\Gamma_2)$), we may assume that $\phi$ preserves cyclic orders. If two parallel rings in $X(\Gamma_1)$ have matching (resp. opposite) orderings, it is clear that $\psi$ sends them to parallel rings with matching (resp. opposite) orderings. That is, the sign between any two consecutive edges $p$ and $q$ in $\Gamma_1$ is the same as the sign between $\phi(p)$ and $\phi(q)$ in $\Gamma_2$. In other words, $\Gamma_1 = \Gamma_2$ up to isomorphism. 
\end{proof}

\begin{remark} \label{rem:n=2}
This statement is false for $n=1$ for the simple reason that there is no distinction between the $f$-curves and the seams (the $c$-curves). The analogous statement for $n=2$ is true (and the proof essentially identical) provided that we replace the signed graphs by $4$-regular ribbon graphs satisfying the conditions of \subsecref{subsec:n=2}. By the argument in that subsection, there are two isomorphism classes of such graphs whenever the number $V$ of vertices is a multiple of $3$, and one isomorphism class otherwise. Therefore, we get two distinct corresponding points in $\M_g$ if $g=V+1$ is congruent to $1 \mod 3$, and only one otherwise. 
\end{remark}

\begin{cor}
Let $n \geq 3$ and let $\Gamma$ be an $n$-regular signed graph of girth larger than $a(t_n)/\sigma(t_n)$. If $\Gamma$ has a trivial automorphism group, then so does $X(\Gamma)$.
\end{cor}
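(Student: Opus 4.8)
The plan is to deduce the corollary directly from \thmref{thm:auto}. Here ``automorphism group'' means the group $\Isom^+(X(\Gamma))$ of orientation-preserving isometries (equivalently, conformal automorphisms) of $X(\Gamma)$; this is the notion that appears in \eqnref{eq:Euler_char}, and also the only sensible one, since $X(\Gamma)$ always carries the orientation-reversing involution $\rho_{\text{seams}}$.

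The key point is that \thmref{thm:auto}, together with the construction of \subsecref{subsec:graph_to_surface} (which attaches to every signed graph isomorphism an orientation-preserving isometry between the corresponding surfaces), sets up a bijection between $\Isom^+(X(\Gamma))$ and the set of automorphisms of $\Gamma$ as a signed graph: every such isometry is induced by a signed graph automorphism, and by a \emph{unique} one. So it suffices to prove that the only signed graph automorphism of $\Gamma$ is the identity, which then induces the identity isometry. First I would observe that a signed graph automorphism is, in particular, an incidence-preserving bijection of the vertex and edge sets of $\Gamma$; since the girth of $\Gamma$ exceeds $a(t_n)/\sigma(t_n)>2$, the graph $\Gamma$ has no loops or multiple edges, so this is nothing but an automorphism of the underlying graph $\Gamma$, which is trivial by hypothesis. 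Hence the automorphism fixes every vertex and every edge of $\Gamma$.

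It then remains to rule out a ``sign-only'' automorphism, i.e. a non-trivial signed graph automorphism that acts as the identity on the underlying graph. By the definition of isomorphism in \subsecref{subsec:signed_graph}, such an automorphism would amount to a non-empty set of vertex flips carrying $\Gamma$ back to itself. But a vertex flip \emph{reverses} the cyclic ordering around the vertex at which it is performed, so any non-empty set of vertex flips changes the cyclic ordering at some vertex and therefore does not return $\Gamma$. Consequently the only signed graph automorphism of $\Gamma$ is the identity, so $\Isom^+(X(\Gamma))=\{\mathrm{id}\}$; that is, $X(\Gamma)$ has trivial automorphism group. I do not expect a genuine obstacle here: the corollary is essentially a repackaging of \thmref{thm:auto}, and the only points that need care are the identification of ``automorphism group'' with $\Isom^+$, the use of the girth hypothesis to exclude multiple edges, and the elementary observation that a vertex flip reverses a cyclic ordering.
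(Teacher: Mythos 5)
Your proof is correct and follows the same route as the paper: the corollary is a direct consequence of Theorem~\ref{thm:auto}, and the paper offers no additional argument. Your ``sign-only automorphism'' analysis is sound for $n\geq 3$ (a cyclic order on three or more elements is never equal to its reverse), and it amounts to re-deriving the paper's own remark in \secref{sec:counting} that ``all the signed graphs with an asymmetric underlying graph are themselves asymmetric''; your reading of ``automorphism group'' as $\Isom^+$ also matches the remark immediately following the corollary.
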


\begin{remark}
By an \emph{automorphism} of a hyperbolic surface, we mean an orientation-preserving isometry. Each surface $X(\Gamma)$ has at least one orientation-reversing isometry, namely the reflection across the seams.
\end{remark}

\section{Counting the number of examples in each genus} \label{sec:counting}

\subsection{Length estimates} \label{subsec:length}

In this subsection, we quantify how large the girth of the signed graph $\Gamma$ needs to be in terms of $n$ for the hypothesis of \thmref{thm:systoles_closed_surface} to be satisfied, that is, we estimate $a(t_n)/\sigma(t_n)$. In particular, we estimate the length $a(t_n)$ of the systoles of the resulting surface $X(\Gamma)$.

\begin{lem}  \label{lem:estimate1}
We have 
\[t_n = n\log\left(1+\sqrt{2}\right) + o(1) \quad \text{and} \quad a(t_n)=4n\log\left(1+\sqrt{2}\right)-2\log 2 +o(1)\]
as $n \to \infty$.
\end{lem}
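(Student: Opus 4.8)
The plan is to track the quantity $\mu_n := e(t_n)/4 = n\arcsinh(\coth t_n)$, which by the definition of $t_n$ satisfies $\cosh t_n = \tanh\mu_n\,\sinh\mu_n$ (this is \eqref{eq:equivalence}), and to show that \emph{both} $t_n$ and $\mu_n$ equal $n\lambda + o(1)$, where $\lambda := \log(1+\sqrt{2}) = \arcsinh(1)$. The estimate for $a(t_n)$ will then drop out of \eqref{eq:length_of_a}.

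First I would establish crude linear bounds. Since $\coth t_n > 1$ for every $t_n>0$, we have $\mu_n > n\lambda$; in particular $\mu_n \to \infty$, so $\cosh t_n = \tanh\mu_n\sinh\mu_n \to \infty$ and hence $t_n \to \infty$ (so also $\coth t_n \to 1$). Because $\mu_n > \lambda$, both $\tanh\mu_n$ and $\sinh\mu_n/e^{\mu_n}$ are bounded below by absolute positive constants, so $e^{t_n} \geq \cosh t_n = \tanh\mu_n\sinh\mu_n \geq c\,e^{\mu_n} \geq c\,e^{n\lambda}$ for some absolute $c>0$, giving $t_n \geq n\lambda - \log(1/c)$. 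In the other direction, $\cosh t_n = \tanh\mu_n\sinh\mu_n = \cosh\mu_n - \operatorname{sech}\mu_n < \cosh\mu_n$ forces $t_n < \mu_n$, and since $t_n \to \infty$ we eventually have $\coth t_n \leq 2$, whence $t_n < \mu_n \leq (\arcsinh 2)\,n$. Thus $t_n$ and $\mu_n$ both grow linearly in $n$.

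The heart of the argument is then a single bootstrap. Because $t_n \geq n\lambda - O(1)$, we get $\coth t_n - 1 = O(e^{-2t_n}) = O(e^{-2n\lambda})$, and since $\arcsinh$ is $C^1$ near $1$ this yields $\arcsinh(\coth t_n) = \lambda + O(e^{-2n\lambda})$; multiplying by $n$ gives $\mu_n = n\lambda + O(n\,e^{-2n\lambda}) = n\lambda + o(1)$. Now take logarithms in $\cosh t_n = \sinh^2\mu_n/\cosh\mu_n$: using $\log\cosh x = x - \log 2 + O(e^{-2x})$ and $\log\sinh x = x - \log 2 + O(e^{-2x})$ as $x\to\infty$, the left side is $t_n - \log 2 + O(e^{-2t_n})$ and the right side is $\mu_n - \log 2 + O(e^{-2\mu_n})$. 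Since $t_n,\mu_n\to\infty$ at least linearly, the error terms are $o(1)$, so $t_n = \mu_n + o(1) = n\lambda + o(1)$. For $a(t_n)$, apply the same logarithmic expansion to $\cosh(a(t_n)/2) = \cosh(t_n)\cosh(\mu_n)$ (this is \eqref{eq:length_of_a} with $e(t_n)/4 = \mu_n$), obtaining $a(t_n)/2 - \log 2 = (t_n - \log 2) + (\mu_n - \log 2) + o(1)$, i.e.\ $a(t_n) = 2t_n + 2\mu_n - 2\log 2 + o(1)$; substituting $t_n = \mu_n = n\lambda + o(1)$ finishes the proof.

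As for the main obstacle: there is no deep point, but one must be careful that the error in $\coth t_n \approx 1$ gets multiplied by $n$ when forming $\mu_n$, so it is essential to obtain the \emph{linear} lower bound $t_n \geq n\lambda - O(1)$ \emph{before} doing the bootstrap---knowing merely that $t_n\to\infty$ would not suffice to force $n(\coth t_n - 1)\to 0$. Everything else is routine asymptotic bookkeeping with $\cosh$, $\sinh$ and $\arcsinh$.
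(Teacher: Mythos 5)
Your proof is correct. Both you and the paper start from the same defining relation $\cosh t_n = \tanh(e(t_n)/4)\sinh(e(t_n)/4)$, but the strategies differ in a genuine way. The paper fixes $\eps>0$ and shows by a direct sandwich/sign-change argument that the solution $t_n$ must lie in $(n\lambda - \eps, n\lambda + \eps)$ for large $n$, whereas you first extract a crude linear bound $n\lambda - O(1) \leq t_n < \mu_n \leq n\arcsinh(2)$, and only then run a bootstrap: the lower bound on $t_n$ forces $\coth t_n - 1 = O(e^{-2n\lambda})$, so $\mu_n = n\lambda + o(1)$, and then the logarithmic expansions of $\cosh$ and $\sinh$ close the gap between $t_n$ and $\mu_n$. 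Your route is slightly more elaborate but also more quantitative: it would yield exponentially small error terms rather than just $o(1)$ if one tracked the constants, and it isolates the key point (the need for a \emph{linear} lower bound on $t_n$ before multiplying $\coth t_n - 1$ by $n$) that is implicit but less visible in the paper's bracketing argument. For the $a(t_n)$ estimate, the paper works from $a = c = 2\arccosh(\sinh^2(e/4))$, while you take logs in $\cosh(a/2) = \cosh(t_n)\cosh(e(t_n)/4)$; these are interchangeable since $a(t_n)=c(t_n)$ and both give the $-2\log 2$ correction.
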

\begin{proof}
Recall that the equality $a(t)=c(t)$ is equivalent to 
\begin{equation} \label{eqn:aequalsc}
\tanh(e(t)/4)\sinh(e(t)/4) =\cosh(t)
\end{equation}
by the proof of \lemref{lem:existenceanduniqueness}, 
and that $e(t)/4 = n \arcsinh(\coth(t))$. 

Let $\eps>0$ and let $\lambda = \log\left(1 + \sqrt{2}\right) = \arcsinh(1)$. We will show that if $n$ is large enough then the difference between the left-hand side (LHS) and the right-hand side (RHS) of \eqnref{eqn:aequalsc} switches sign when $t$ is between $n\lambda - \eps$ and $n\lambda + \eps$.

First observe that $e(t)/4 > n \arcsinh(1) = n \lambda$ for every $t>0$ and every $n$. Moreover, if $t\geq n\lambda - \eps$, then $e(t)/4 \leq e(n\lambda - \eps)/4=n \lambda + o(1)$ as $n \to \infty$. Now 
\[\tanh(x)\sinh(x)= \exp(x)/2+o(1) \quad  \text{and} \quad \cosh(x)=\exp(x)/2+o(1)\] 
as $x \to \infty$. Thus at $n\lambda - \eps$ the LHS of \eqref{eqn:aequalsc} is  at least $\exp(n\lambda)/2+o(1)$ whereas the RHS is equal to $\exp(n\lambda-\eps)/2 + o(1)$. So the RHS is smaller that the LHS at $n\lambda - \eps$ if $n$ is large enough. Similarly, the RHS is larger than the LHS at $n\lambda+\eps$ if $n$ is large enough. This shows that $t_n$ is in the interval $(n\lambda - \eps, n\lambda + \eps)$ if $n$ is large enough. Since $\eps>0$ was arbitrary, $t_n = n \lambda + o(1)$.

Recall that $a(t_n)=c(t_n)$ and $\cosh(c/2)= \sinh^2(e/4)$. Since \[\arccosh(\sinh^2(x))=2x-\log 2+o(1)\] as $x \to \infty$ and $e(t_n)/4=n\lambda + o(1)$ we obtain
\[
a(t_n)=c(t_n)=2 \arccosh(\sinh^2(e(t_n)/4)) = 4n\lambda -2\log 2 +o(1)
\]
as $n \to \infty$.
\end{proof}

The next thing we need is an asymptotic lower bound for $\sigma(t_n)$.

\begin{lem}
We have $\sigma(t_n) \geq \left(1+\sqrt{2}\right)^{-n}$ if $n$ is large enough.
\end{lem}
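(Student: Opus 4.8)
The plan is to start from the identity $\cosh\sigma(t)=\coth^2 t$ recorded in \eqnref{eq:sigma} and combine it with the estimate $t_n=n\lambda+o(1)$ from \lemref{lem:estimate1}, where $\lambda=\log(1+\sqrt 2)=\arcsinh(1)$. Since $\coth^2 t_n\to 1$ as $n\to\infty$, we have $\sigma(t_n)\to 0$, so everything reduces to controlling the rate at which $\sigma(t_n)$ decays.

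First I would rewrite $\coth^2 t-1=1/\sinh^2 t$, so that $\cosh\sigma(t_n)-1=1/\sinh^2(t_n)$. From $\sinh(t_n)<\tfrac12 e^{t_n}$ this gives the lower bound $\cosh\sigma(t_n)-1\geq 4e^{-2t_n}$. For the reverse direction, on $[0,1]$ the ratio $\sinh y/y$ is increasing with value $\sinh 1<2$ at $y=1$, so $\sinh y\leq 2y$ there and hence $\cosh y-1=\int_0^y\sinh u\,\d u\leq y^2$ for $0\leq y\leq 1$. Because $\sigma(t_n)\to 0$, for $n$ large enough we have $\sigma(t_n)\leq 1$ and therefore $\sigma(t_n)^2\geq\cosh\sigma(t_n)-1\geq 4e^{-2t_n}$, i.e. $\sigma(t_n)\geq 2e^{-t_n}$.

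To conclude, the asymptotic $t_n=n\lambda+o(1)$ yields $t_n\leq n\lambda+\log 2$ for all sufficiently large $n$, so
\[
\sigma(t_n)\geq 2e^{-t_n}\geq 2e^{-n\lambda-\log 2}=e^{-n\lambda}=\left(1+\sqrt 2\right)^{-n},
\]
which is exactly the desired inequality.

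All the estimates here are elementary and I do not anticipate a genuine obstacle; the only point requiring a little care is to make the constants explicit enough that the leading-order behaviour $\sigma(t_n)\sim 2\sqrt2\,e^{-t_n}$ (constant $2\sqrt2\approx 2.83$) comfortably dominates the constant $1$ demanded by the statement, which is why the crude bounds $\sinh y\leq 2y$ on $[0,1]$ and $\sinh t_n<\tfrac12 e^{t_n}$ are enough. An essentially equivalent route is to use monotonicity directly, writing $\sigma(t_n)=\arccosh(\coth^2 t_n)\geq\arccosh(\coth^2(n\lambda+\eps))$ for $n$ large and expanding $\arccosh(\coth^2 x)$ as $x\to\infty$.
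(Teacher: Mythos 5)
Your proof is correct and follows essentially the same route as the paper's: both begin from the identity $\cosh\sigma(t)=\coth^2 t$ (so $\cosh\sigma(t_n)-1=1/\sinh^2(t_n)$), use the crude bound $\sinh t<e^t/2$ to get a lower bound of order $e^{-2t_n}$, convert this into a lower bound on $\sigma(t_n)$ via an elementary inequality valid for small arguments, and finish with $t_n=n\log(1+\sqrt 2)+o(1)$. The only cosmetic difference is the choice of elementary inequality: the paper works with $\sinh^2(\sigma/2)$ and uses $x\geq\sinh(x/2)$ on $[0,4.354]$, whereas you bound $\cosh y-1\leq y^2$ on $[0,1]$; both are comfortably within the slack in the leading constant.
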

\begin{proof}
According to \eqnref{eq:sigma} we have
\[\cosh(\sigma(t))=\coth^2(t) =  1 + \frac{1}{\sinh^2(t)}\]
so that
\[
\sinh^2(\sigma(t)/2) = \frac{\cosh(\sigma(t)) - 1}{2} =  \frac{1}{2\sinh^2(t)} \geq 2 \exp(-2t).
\]

Now $x \geq \sinh(x/2)$ for every $x \in [0,4.354]$. Morever $t_n \geq 1$ for every $n \geq 2$ (see the proof of \lemref{lem:a_shorter_than_f}), which implies that $\sigma(t_n) \leq \arccosh(\coth^2(1))\approx 1.141$. We conclude that
\[
\sigma(t_n) \geq \sinh(\sigma(t_n)/2) \geq \sqrt{2} \exp(-t_n)=\sqrt{2}\left(1+\sqrt{2}\right)^{-n +o(1)} \geq \left(1+\sqrt{2}\right)^{-n} 
\]
if $n$ is large enough, where we used \lemref{lem:estimate1} for the equality sign.
\end{proof}

\begin{remark}
Actually, $\sigma(t_n)$ is closer to $2\sqrt{2} \left(1+\sqrt{2}\right)^{-n}$, but the above is all we need.
\end{remark}

The previous two lemmata combined together yield the following estimate for the ratio of $a(t_n)$ over $\sigma(t_n)$. If the signed graph $\Gamma$ has girth larger than this, then the systoles of $X(\Gamma)$ are the $a$-, $b$- and $c$-curves according to \thmref{thm:systoles_closed_surface}.

\begin{cor} \label{cor:girth_bound}
There is a constant $K>0$ such that  $a(t_n) / \sigma(t_n) \leq K n \left(1+\sqrt{2}\right)^n$ for every $n\geq 2$.
\end{cor}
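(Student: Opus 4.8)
The plan is to obtain the bound for all large $n$ by dividing the two preceding lemmata, and then absorb the finitely many small values of $n$ into the constant. First I would invoke \lemref{lem:estimate1}, which gives $a(t_n) = 4n\log(1+\sqrt{2}) - 2\log 2 + o(1)$; in particular there is an integer $N_0 \geq 2$ and a constant $C_1 > 0$ with $a(t_n) \leq C_1 n$ for every $n \geq N_0$. Enlarging $N_0$ if necessary, the lemma preceding the last remark yields $\sigma(t_n) \geq (1+\sqrt{2})^{-n}$ for all $n \geq N_0$. Dividing these two inequalities gives
\[
\frac{a(t_n)}{\sigma(t_n)} \leq C_1\, n \left(1+\sqrt{2}\right)^n \qquad \text{for every } n \geq N_0.
\]

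Next I would handle the indices $n \in \{2, \ldots, N_0 - 1\}$, of which there are only finitely many. For each such $n$, the parameter $t_n$ exists and is unique by \lemref{lem:existenceanduniqueness}, and both $a(t_n)$ and $\sigma(t_n)$ are positive real numbers, so the ratio $a(t_n)/\sigma(t_n)$ is a well-defined positive number. Since $n(1+\sqrt{2})^n \geq 2(1+\sqrt{2})^2 > 0$ on this range, there is a constant $C_2 > 0$ such that $a(t_n)/\sigma(t_n) \leq C_2\, n(1+\sqrt{2})^n$ for all $n$ in this finite set. Setting $K = \max\{C_1, C_2\}$ then gives the claimed inequality for every $n \geq 2$.

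There is no real obstacle in this argument; the only point requiring a modicum of care is the transition from the asymptotic statements ("$o(1)$", "if $n$ is large enough") in the two previous lemmata to a bound that is uniform over all $n \geq 2$, which is dispatched by the standard device of treating the finitely many exceptional values separately and inflating the constant. If one wished, one could instead track the implied constants explicitly and produce a concrete value of $K$, but this is not needed for the applications.
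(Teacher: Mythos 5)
Your proposal is correct and follows essentially the same route as the paper, which simply states that ``the previous two lemmata combined together yield'' the corollary. You divide the bound $a(t_n) = 4n\log(1+\sqrt{2}) - 2\log 2 + o(1)$ from \lemref{lem:estimate1} by the bound $\sigma(t_n) \geq (1+\sqrt{2})^{-n}$, valid for $n$ large, and then absorb the finitely many remaining values $n \in \{2,\ldots,N_0-1\}$ into the constant $K$ — exactly the intended argument.
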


For small $n$, we can compute $t_n$, $a(t_n)$ and $\sigma(t_n)$ numerically to get a more explicit bound on the girth (see Table \ref{table}).

\begin{table}[htp] 
\centering
\begin{tabular}{|c|c|c|c|c|c|}
\hline
 $n$ & $t_n$ & $a(t_n)$ & $\sigma(t_n)$ & $a(t_n)/\sigma(t_n)$ \\ 
\hline 
2 & 1.745752 & 5.909039 &  0.503760  & 11.729861 \\
\hline
3 & 2.645975 &  9.256205 &  0.201312  & 45.979325 \\
\hline
4 & 3.526946 & 12.731803 &  0.083188 & 153.048057 \\
\hline
\end{tabular}
\caption{Approximate values of $t_n$ and $a(t_n)/\sigma(t_n)$ for small $n$} \label{table}
\end{table}

For $n=2$, the girth of $\Gamma$ needs to be at least $12$, hence the genus of $X(\Gamma)$ at least $13$. The minimal number of vertices needed for a $3$-regular graph $\Gamma$ to have girth $46$ is not known, but it is at least $3\cdot 2^{22}=12\,582\,912$. The corresponding surfaces $X(\Gamma)$ have genus at least $18\,874\,369$. The genus required for $n=4$ is at least $2\cdot3 \cdot 4^{76}$, which is more than the estimated number of stars in the observable universe \cite{Ghose}.

\subsection{Counting signed graphs} \label{subsec:ribbon_count}

In this subsection, we give a lower bound for the number of isomorphism classes of connected, $n$-regular, signed graphs with $(g-1)$ edges, girth larger than $a(t_n)/\sigma(t_n)$, and trivial automorphism group for $g$ sufficiently large. This proves \thmref{thmB} from the introduction, which we restate more precisely as follows.

\begin{thm} \label{Thm:Count}
Let $n\geq3$ and let $g$ be a positive integer such that $2(g-1)/n$ is also a positive integer. Let $N(n,g)$ be the number of local maxima $x$ of the systole function in $\M_g$ with $\sys(x)=L_n=a(t_n)$ whose automorphism group is trivial. Then, for $g$ large enough, 
we have
\[
N(n,g) \geq \alpha_n \big(\beta \, g \big)^{\left(1-\frac{2}{n} \right) g}, 
\] 
where $\beta>0$ is independent of $n$ and $g$, and $\alpha_n>0$ is independent of $g$ and satisfies
\[
\log \log \log \frac 1{\alpha_n} \sim_n n\log(1+\sqrt{2}). 
\]
\end{thm}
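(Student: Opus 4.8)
The plan is to bound $N(n,g)$ from below by the number of signed graphs that Sections~\ref{sec:systoles}--\ref{sec:isometries} turn into local maxima. Every finite, connected, $n$-regular, signed graph $\Gamma$ of girth larger than $w_n:=a(t_n)/\sigma(t_n)$ gives a surface $X(\Gamma)$ which is a local maximum of $\sys$ with $\sys(X(\Gamma))=L_n=a(t_n)$ by \thmref{thm:local_max}, and whose genus equals $E+1$ by \thmref{thm:systoles_closed_surface}, where $E=nV/2$ is the number of edges of $\Gamma$ and $V$ its number of vertices; thus $X(\Gamma)\in\M_g$ precisely when $V=2(g-1)/n$. Non-isomorphic signed graphs give rise to distinct points of $\M_g$ by \thmref{thm:auto}, and if the underlying graph $\underline\Gamma$ is rigid then $\Gamma$ is rigid as a signed graph (the vertex-flip action being free for $n\ge3$, since a vertex flip reverses a cyclic order and a cyclic order of $n\ge3$ distinct edges is never its own reverse), hence $X(\Gamma)$ is rigid by the corollary to \thmref{thm:auto}. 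Therefore
\[
N(n,g)\ \ge\ \#\bigl\{\,[\Gamma]\ :\ \Gamma\ \text{connected, }n\text{-regular, signed, with }\tfrac{2(g-1)}{n}\ \text{vertices, girth}>w_n,\ \underline\Gamma\ \text{rigid}\,\bigr\}.
\]

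Next I would split this count by underlying graph. Fix a rigid, connected, $n$-regular, labelled graph $G$ on $V=2(g-1)/n$ vertices. Each vertex of $G$ admits $(n-1)!$ cyclic orders of its incident edges and $2^{n-1}$ admissible sign patterns, for a total of $\bigl((n-1)!\,2^{n-1}\bigr)^V$ signed structures on $G$; the group $(\Z/2)^V$ of vertex flips acts freely on these, and since $G$ is rigid two structures give isomorphic signed graphs exactly when they lie in one flip-orbit. Hence $G$ supports precisely $\bigl((n-1)!\,2^{n-2}\bigr)^V$ pairwise non-isomorphic rigid signed graphs. Summing over isomorphism classes of $G$ and writing $R(n,V)$ for the number of isomorphism classes of rigid, connected, $n$-regular graphs of girth $>w_n$ on $V$ vertices, this gives
\[
N(n,g)\ \ge\ R(n,V)\cdot\bigl((n-1)!\,2^{n-2}\bigr)^{V}.
\]

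It then remains to bound $R(n,V)$ from below, for which I would use the configuration (pairing) model on the $2m:=nV$ half-edges, where $m=g-1$. Standard results on random regular graphs give, for fixed $n\ge3$ and $V\to\infty$: the numbers of cycles of each bounded length tend to independent Poisson variables, so a proportion tending to $p_n:=\exp\bigl(-\tfrac{n^2-1}{4}-\sum_{k=3}^{w_n}(n-1)^k/(2k)\bigr)>0$ of the $(2m-1)!!$ pairings produce a simple graph of girth $>w_n$; and among those, a proportion $1-o(1)$ are moreover connected with trivial automorphism group. Since each simple labelled graph arises from exactly $(n!)^V$ pairings and each rigid isomorphism class from exactly $V!$ labelled graphs, for $g$ large we obtain $R(n,V)\ge c_0\,p_n\,(2m-1)!!\,(n!)^{-V}(V!)^{-1}$ for an absolute constant $c_0>0$. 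Substituting into the displayed bound and simplifying with Stirling's formula — using in particular that $(n-1)!\,2^{n-2}/n!=2^{n-2}/n$ \emph{exactly}, so the $n$-dependence collapses — one reaches
\[
\frac{(2m-1)!!\,\bigl((n-1)!\,2^{n-2}\bigr)^{V}}{(n!)^{V}\,V!}\ \asymp\ \Bigl(\tfrac{8m}{e}\Bigr)^{\!\left(1-\frac2n\right)m},
\]
whose base $8/e$ is independent of $n$. Absorbing the remaining polynomial-in-$m$ Stirling factors by slightly shrinking the base, this yields $N(n,g)\ge\alpha_n(\beta g)^{(1-2/n)g}$ for $g$ large, with $\beta$ an absolute constant (any $\beta<8/e$ works once the shift $m=g-1$ is accounted for) and $\alpha_n:=c\,p_n$ for an absolute constant $c>0$; in particular $\alpha_n$ does not depend on $g$. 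Finally, since $\sum_{k=3}^{w_n}(n-1)^k/(2k)$ is comparable to its last term $(n-1)^{w_n}/(2w_n)$, one has $\log(1/\alpha_n)\asymp(n-1)^{w_n}/w_n$ and hence $\log\log(1/\alpha_n)\sim w_n\log n$; combining with the fact that $w_n=a(t_n)/\sigma(t_n)$ is of order $n(1+\sqrt2)^n$ (from \corref{cor:girth_bound}, \lemref{lem:estimate1}, and the matching estimate $\sigma(t_n)\asymp(1+\sqrt2)^{-n}$), so that $\log w_n=n\log(1+\sqrt2)+O(\log n)$, gives $\log\log\log(1/\alpha_n)\sim n\log(1+\sqrt2)$.

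The main obstacle is the input from the theory of random regular graphs: one needs not merely the asymptotic count of $n$-regular graphs but that, for fixed $n$ and $V\to\infty$, a uniformly positive proportion of them are simultaneously simple, connected, rigid, and of girth exceeding $w_n$ — this requires combining the Poisson approximation for short cycles with the facts that random regular graphs are asymptotically almost surely connected and asymmetric, which automatically transfers to the large-girth ones since those form a proportion bounded away from $0$. Everything else is bookkeeping; the one delicate point is to keep track of the exact factor $\bigl((n-1)!\,2^{n-2}\bigr)^V$ of signed structures through the computation, since this is precisely what cancels the $n$-dependence of the graph count and forces $\beta$ to be independent of $n$.
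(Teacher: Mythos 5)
Your proposal is correct and follows essentially the same route as the paper: bound $N(n,g)$ below by the number of isomorphism classes of connected, asymmetric, $n$-regular signed graphs of girth exceeding $w_n$, factor this as (number of signed structures per vertex, modulo the free $(\Z/2)^V$ flip action) times (number of asymmetric connected regular graphs of large girth), and estimate the latter via the Poisson short-cycle statistics of random regular graphs together with generic connectedness and asymmetry. The only cosmetic difference is that you re-derive the asymptotic count from the configuration model, whereas the paper cites Bollob\'as's unlabelled enumeration formula and Wormald's girth refinement as black boxes; the exact cancellation $(n-1)!\,2^{n-2}/n!=2^{n-2}/n$ that makes $\beta$ independent of $n$, the Stirling simplification to $(\beta g)^{(1-2/n)g}$, and the triple-logarithm estimate for $\alpha_n$ all match the paper.
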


The asymptotic notation $f(x) \sim_x g(x)$ above means that
\[
\lim_{x \to \infty} \frac{f(x)}{g(x)} = 1. 
\]
We emphasize which variable is sent to infinity since the functions we compare may depend on other parameters.

We start by giving a lower bound for the number $S(n, E, w)$ of isomorphism classes of unlabelled, connected, $n$-regular, signed graphs with $E$ edges and girth at least $w$. To simplify matters, first consider labelled such graphs. 

Recall that a signed graph is a graph plus a cyclic ordering of the edges attached to every vertex and a choice of sign between consecutive edges such that the product of the signs around any vertex is negative. Recall also that the cyclic order around any vertex can be reversed (and two signs around each neighbor changed appropriately) without changing the isomorphism class of a signed graph. Thus a signed graph $\Gamma$ with labelled vertices has a total of $2^V$ isomorphic representations with the same vertex labels, where $V$ is the number of vertices in $\Gamma$.

Assume that a cyclic order has been chosen for the edges around each vertex (there are $(n-1)!$ cyclic orders on $n$ elements). Then there are $n$ signs to pick around each vertex, but since their product is required to be negative, any sign can be deduced from the remaining ones. Hence the number of admissible sign patterns around a vertex is $2^{n-1}$. The total number of sign patterns on the whole graph is therefore $2^{(n-1)V}$.

Now to count the number of isomorphism classes of unlabelled signed graphs, we have to take into account the fact that some graphs admit non-trivial automorphisms, which could result in overcounting. To remedy this, we restrict ourselves to underlying graphs that are asymmetric, that is, have trivial automorphism group.  Let $A(n, E, w)$ be the number of unlabelled, connected, asymmetric, $n$-regular graphs with $E$ edges, and of girth at least $w$. Then the above reasoning shows that
\begin{equation} \label{Eq:Ribbon} 
S(n, E, w) \geq 2^{(n-2)V} ((n-1)!)^V A(n, E, w). 
\end{equation}
Note that all the signed graphs with an asymmetric underlying graph are themselves 
asymmetric. 

To estimate $A(n, E, w)$ we combine a few results from graph theory. Let 
$U(E, n, w)$ be the number of unlabelled $n$-regular graphs with $E$ edges 
and girth at least $w$. In the literature, it is often assumed that the graphs are
simple, namely, that $w \geq 3$ (no monogons or bigons). To emphasize this
and to maintain a consistent notation, we use $U(E, n, 3)$ for the number
of unlabelled $n$-regular simple graphs with $E$ edges.

In \cite{Bollobas-Counting} Bollob\'as showed:

\begin{thm}[Bollob\'as] \label{Thm:Bol} 
For every $n \geq 3$, we have
\[
U(n,E,  3) \sim_E \exp\left( - \sum_{i=1}^{2} \frac{(n-1)^i}{2i} \right)
\cdot \frac{(2E)!}{2^E \, E! \, V! \, (n!)^V} 
\]
as $E \to \infty$, where $V=2E/n$ is the number of vertices in the graphs.
\end{thm}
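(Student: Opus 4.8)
The plan is to prove this via the \emph{configuration (pairing) model} together with the method of moments, following Bollob\'as. First I would set up the model: fix $V=2E/n$ labelled vertices, attach $n$ half-edges to each so that there are $2E$ half-edges in all, and call a \emph{configuration} a perfect matching of these half-edges. The number of configurations is $(2E-1)!! = (2E)!/(2^E E!)$. Each configuration projects to a labelled $n$-regular multigraph by forgetting which half-edge at a vertex was used for a given incidence. The key combinatorial identity is that every labelled \emph{simple} $n$-regular graph is the projection of exactly $(n!)^V$ configurations (one for each bijection, at each vertex, between half-edges and incident edges), while multigraphs with loops or parallel edges arise from strictly fewer. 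Hence, writing $U_{\mathrm{lab}}(n,E,3)$ for the number of labelled such graphs and $\Phi$ for a uniformly random configuration,
\[
U_{\mathrm{lab}}(n,E,3) \;=\; \frac{\#\{\text{simple configurations}\}}{(n!)^V} \;=\; \Pr[\,\Phi\text{ simple}\,]\cdot\frac{(2E)!}{2^E\,E!\,(n!)^V}.
\]

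The analytic heart is to show $\Pr[\Phi\text{ simple}] \to \exp\!\big(-\sum_{i=1}^{2}\tfrac{(n-1)^i}{2i}\big)$ as $E\to\infty$. Let $X_1$ be the number of loops produced by $\Phi$ and $X_2$ the number of pairs of parallel edges. A short expectation computation gives $\mathbb{E}[X_1]\to\lambda_1:=(n-1)/2$ (each of the $\binom n2$ pairs of half-edges at a vertex is matched to itself with probability $\sim 1/(2E)$, and there are $V$ vertices) and $\mathbb{E}[X_2]\to\lambda_2:=(n-1)^2/4$ (choosing two half-edges at each of two vertices and matching them in one of two ways, with probability $\sim 1/(2E)^2$, over $\sim V^2/2$ vertex pairs). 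I would then verify that the joint falling-factorial moments satisfy $\mathbb{E}\big[(X_1)_{(a)}(X_2)_{(b)}\big]\to\lambda_1^{a}\lambda_2^{b}$ for all fixed $a,b\ge 0$: the dominant contribution comes from $a$ vertex-disjoint loops and $b$ vertex-disjoint double edges using distinct half-edges, while every configuration in which these structures share a vertex or a half-edge contributes a factor $O(1/E)$ and is negligible. By the method of moments this forces $(X_1,X_2)$ to converge in law to a pair of independent Poisson variables of means $\lambda_1,\lambda_2$ (the limiting distribution being determined by its moments), so $\Pr[\Phi\text{ simple}]=\Pr[X_1=X_2=0]\to e^{-\lambda_1-\lambda_2}$, which is exactly the asserted constant. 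Substituting back yields the stated asymptotics for $U_{\mathrm{lab}}(n,E,3)$.

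Finally I would pass from labelled to unlabelled graphs. One has $U(n,E,3)=\sum_G 1/|\mathrm{Aut}(G)|$ over isomorphism classes, which agrees with $U_{\mathrm{lab}}(n,E,3)/V!$ only up to the correction coming from graphs with nontrivial automorphisms. The standard fact (due to Bollob\'as, and obtainable by a first-moment bound on the number of $n$-regular graphs fixed by a given nontrivial vertex permutation) is that the fraction of labelled $n$-regular simple graphs with $|\mathrm{Aut}(G)|>1$ tends to $0$ as $E\to\infty$; hence $U(n,E,3)\sim U_{\mathrm{lab}}(n,E,3)/V!$, and combining with the previous paragraph gives the theorem.

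I expect the main obstacle to be the factorial-moment estimate in the second paragraph. The leading-order counts of loops and double edges are routine, but the real work lies in showing that all "degenerate" configurations — overlapping loops or double edges, and loops interacting with double edges — contribute only lower-order terms, with enough uniformity in $a$ and $b$ to conclude moment convergence and to rule out escape of mass; this uniform control over short-cycle interactions in the pairing model is precisely the technical core of Bollob\'as's original argument.
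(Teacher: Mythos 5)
This statement is not proved in the paper at all: it is quoted verbatim as Bollob\'as's theorem and used as a black box, with the reference [Bol82a] (\emph{The asymptotic number of unlabelled regular graphs}). So there is no internal argument to compare against; what you have written is a sketch of the standard proof of the cited result itself, and it is essentially Bollob\'as's own route. Your setup is correct: the configuration model gives $(2E)!/(2^E E!)$ pairings, each simple labelled graph lifts to exactly $(n!)^V$ of them, the expected numbers of loops and of double edges converge to $\lambda_1=(n-1)/2$ and $\lambda_2=(n-1)^2/4$ (matching the constant $\exp(-\sum_{i=1}^{2}(n-1)^i/2i)$ in the statement), and joint factorial moments plus the method of moments give asymptotic independence and Poisson limits, hence $\Pr[\Phi \text{ simple}]\to e^{-\lambda_1-\lambda_2}$.

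The one step you should tighten is the passage from labelled to unlabelled graphs. Knowing only that the \emph{proportion} of labelled $n$-regular simple graphs with a nontrivial automorphism tends to $0$ does not by itself yield $U(n,E,3)\sim U_{\mathrm{lab}}(n,E,3)/V!$: an isomorphism class with large automorphism group has as few as $V!/|\aut(G)|$ labelled representatives, so a vanishing fraction of labelled graphs could in principle still account for a non-vanishing fraction of isomorphism classes. The correct estimate is the Burnside-type identity
\[
U(n,E,3)=\frac{1}{V!}\sum_{\pi\in S_V}|\fix(\pi)|
=\frac{U_{\mathrm{lab}}(n,E,3)}{V!}+\frac{1}{V!}\sum_{\pi\neq \mathrm{id}}|\fix(\pi)|,
\]
and what must be shown is $\sum_{\pi\neq\mathrm{id}}|\fix(\pi)|=o(U_{\mathrm{lab}}(n,E,3))$, i.e.\ that the \emph{expected number} of nontrivial automorphisms of a uniform labelled graph tends to $0$. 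This is exactly the first-moment bound over permutations (stratified by the size of their support) that you mention parenthetically, so the right ingredient is in your outline; just make sure the conclusion is drawn from that summed bound rather than from the weaker ``almost all graphs are asymmetric'' statement. With that correction, your sketch is a faithful account of the proof of the theorem the paper cites.
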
 

Wormald \cite{Wormald} strengthened this result to show:

\begin{thm}[Wormald] \label{Thm:Wor} 
For every $n\geq 3$ and every $\girth \geq 3$ we have
\[ 
U(n, E, \girth) \sim_E \exp\left( - \sum_{i=3}^{\girth - 1} \frac{(n-1)^i}{2i} \right) U(n, E, 3). 
\]
\end{thm}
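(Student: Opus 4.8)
The plan is to deduce Wormald's estimate from Bollob\'as's \thmref{Thm:Bol} by reworking the latter's proof through the configuration (pairing) model, of which \thmref{Thm:Bol} is precisely the $\girth = 3$ instance. Since the statement is trivial for $\girth \leq 3$, assume $\girth \geq 4$. Fix $n \geq 3$ and set $V = 2E/n$. Attach $n$ labelled half-edges to each of $V$ labelled vertices and call a perfect matching of the $2E$ resulting half-edges a \emph{configuration}; there are exactly $(2E)!/(2^E\,E!)$ of them, and each projects to an $n$-regular multigraph on the vertex set. The first, routine step is that every labelled simple $n$-regular graph is the image of precisely $(n!)^V$ configurations, and that, since a graph of girth $\geq \girth$ is automatically simple once $\girth \geq 3$, the labelled $n$-regular graphs of girth $\geq \girth$ correspond $(n!)^V$-to-one to the configurations whose multigraph has no cycle of length $\leq \girth - 1$.

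The substantive step is a joint Poisson limit for short cycle counts in a uniformly random configuration. For $1 \leq j \leq \girth - 1$, let $X_j$ be the number of cycles of length $j$ in the associated multigraph (a loop being a $1$-cycle, a double edge carrying two $2$-cycles, and so on). I would show, by the method of moments, that as $E \to \infty$ the vector $(X_1, \dots, X_{\girth-1})$ converges in distribution to a vector of independent Poisson random variables, the $j$-th having mean $\lambda_j = (n-1)^j/(2j)$. The relevant count is classical: a candidate $j$-cycle is an ordered choice of $j$ distinct vertices modulo rotation and reflection together with an in- and an out-half-edge at each, so there are $\sim V^j [n(n-1)]^j/(2j)$ of them, each present in a random configuration with probability $\sim (2E)^{-j} = (nV)^{-j}$; this gives $\mathbb{E}[X_j] \to \lambda_j$, and the analogous estimate for families of pairwise vertex-disjoint cycles shows that the joint falling-factorial moments $\mathbb{E}\bigl[\prod_{j} X_j(X_j-1)\cdots(X_j-r_j+1)\bigr]$ converge to $\prod_{j}\lambda_j^{\,r_j}$, which forces the independent Poisson limit.

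Granting this, $\Pr(X_1 = \cdots = X_{\girth-1} = 0) \to \exp\bigl(-\sum_{j=1}^{\girth-1}\lambda_j\bigr)$. Multiplying by the number of configurations, dividing by the multiplicity $(n!)^V$, and then dividing by $V!$ — legitimate because for $n \geq 3$ asymptotically all $n$-regular graphs, hence all those of girth $\geq \girth$, are asymmetric, a fact one may quote from the literature — yields
\[
U(n, E, \girth) \;\sim_E\; \exp\!\left(-\sum_{j=1}^{\girth-1}\frac{(n-1)^j}{2j}\right) \cdot \frac{(2E)!}{2^E\, E!\, V!\, (n!)^V}.
\]
The $\girth = 3$ instance of this formula is exactly \thmref{Thm:Bol}, so dividing the two asymptotics cancels the common factor $(2E)!/(2^E E!\, V!\, (n!)^V)$ and leaves
\[
\frac{U(n, E, \girth)}{U(n, E, 3)} \;\sim_E\; \exp\!\left(-\sum_{j=3}^{\girth-1}\frac{(n-1)^j}{2j}\right),
\]
as claimed.

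The main obstacle is the joint Poisson limit for $(X_1, \dots, X_{\girth-1})$: one must carry out the falling-factorial moment estimates with enough uniformity — in particular controlling the contribution of cycles that fail to be vertex-disjoint, as well as loops and multiple edges — to legitimately apply the method of moments. The remaining ingredients (enumerating configurations, the $(n!)^V$ multiplicity, the asymmetry of random regular graphs, and the final division) are elementary or standard, and are in any case already present in the proof of \thmref{Thm:Bol}; beyond it, essentially the only new work is to extend that proof's cycle-count bookkeeping from cycle lengths $\{1,2\}$ to all lengths $\{1, 2, \dots, \girth-1\}$.
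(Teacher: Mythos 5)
This theorem is cited by the paper from Wormald's 1981 article without a proof, so there is no in-paper argument to compare yours against; what you have supplied is, in effect, a sketch of the literature proof. Your plan --- the configuration (pairing) model, a joint Poisson limit for short cycle counts via the method of falling-factorial moments, and division by the $\girth = 3$ instance of the same computation --- is exactly Wormald's approach, and your expectation calculation $\mathbb{E}[X_j] \to (n-1)^j/(2j)$ is correct. Note that your argument actually re-derives \thmref{Thm:Bol} en route rather than using it as a black box, which makes the argument more self-contained than the paper's treatment (which quotes both results and only combines them). You are also right to flag the joint Poisson limit as the substantive work; everything else is bookkeeping.

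One step is more delicate than your one-line justification suggests: the passage from labelled to unlabelled counts by dividing by $V!$. The configuration model directly gives an asymptotic for the \emph{labelled} count $L(n,E,\girth)$ of $n$-regular graphs of girth $\geq \girth$, and $L(n,E,\girth) = \sum_{G} V!/|\aut(G)|$, the sum running over unlabelled $G$. The fact that ``almost all labelled $n$-regular graphs are asymmetric'' (even restricted to girth $\geq \girth$, which requires first knowing that girth-$\geq\girth$ graphs are a positive proportion --- itself a consequence of the Poisson limit, so available but worth invoking explicitly) only yields $L(n,E,\girth) \sim V!\, A(n,E,\girth)$. To conclude $L(n,E,\girth) \sim V!\, U(n,E,\girth)$ one additionally needs $A(n,E,\girth) \sim U(n,E,\girth)$; these are not a priori the same claim, since a family of unlabelled symmetric graphs with large automorphism groups could in principle contribute negligibly to $L$ while still inflating $U$. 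The paper closes exactly this gap in the chain of inequalities displayed right after \thmref{Thm:Asym}, using $U(n,E,\girth) - A(n,E,\girth) \leq U(n,E,3) - A(n,E,3)$ together with \thmref{Thm:Wor} to ensure $U(n,E,3) = O(U(n,E,\girth))$. Because you re-derive \thmref{Thm:Wor} directly, you can run the analogous argument without circularity, but as written the justification elides this step.
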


Together, these two results imply that
\begin{equation} \label{Eq:Asymptotic}
U(n,E, \girth) \sim_E \exp\left( - \sum_{i=1}^{\girth -1} \frac{(n-1)^i}{2i} \right)
\cdot \frac{(2E)!}{2^E \, E! \, V! \, (n!)^V}. 
\end{equation} 

Bollob\'as also showed that regular simple graphs 
are generically connected \cite[p.195]{Bollobas-Connected} and asymmetric \cite[Theorem 6]{Bollobas-Asymmetric}.

\begin{thm}[Bollob\'as] 
\label{Thm:Asym} For every $n\geq 3$ we have
\[
A(n, E, 3) \sim_E U(n, E, 3). 
\]
\end{thm}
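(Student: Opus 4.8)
The plan is to deduce the statement from \thmref{Thm:Bol} together with two classical facts about the configuration (pairing) model, the passage from labelled to unlabelled counts being handled by a short counting identity. Throughout, fix $n\geq 3$, set $V = 2E/n$, write $\Phi = \Phi(n,V) = (nV-1)!!$ for the number of perfect matchings on the $nV$ stubs of the pairing model, and let $L = L(n,V)$ be the number of \emph{labelled} simple $n$-regular graphs on $\{1,\dots,V\}$ (recall that girth at least $3$ simply means simple). Recall Bollob\'as's pairing-model estimate \cite{Bollobas-Counting}: the numbers of loops and of double edges of a uniformly random configuration converge in distribution to independent Poisson variables with means $(n-1)/2$ and $(n-1)^2/4$, so the number of \emph{simple} configurations is asymptotic to $e^{-\lambda}\Phi$ with $\lambda = \sum_{i=1}^{2}(n-1)^i/(2i)$; since each labelled simple $n$-regular graph is the image of exactly $(n!)^V$ simple configurations, this gives $L \sim e^{-\lambda}\Phi/(n!)^V$. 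Comparing with \thmref{Thm:Bol}, and using $(2E)!/(2^E E!) = (2E-1)!! = \Phi$, one obtains
\begin{equation}\label{eq:labunlab}
L(n,V) \sim V!\cdot U(n,E,3) \qquad (E\to\infty).
\end{equation}

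The next step is to bound the number of \emph{symmetric} graphs. Summing over isomorphism classes $[G]$ of simple $n$-regular graphs on $V$ vertices, the elementary identity $L = \sum_{[G]} V!/|\aut(G)|$ gives $\sum_{[G]}(1-1/|\aut(G)|) = U(n,E,3) - L/V!$, which is $o(U(n,E,3))$ by \eqref{eq:labunlab}. Since every summand is non-negative and $1-1/|\aut(G)|\geq \tfrac12$ whenever $|\aut(G)|>1$, it follows that the number of isomorphism classes with non-trivial automorphism group is $o(U(n,E,3))$. Hence the number of \emph{labelled} symmetric simple $n$-regular graphs is $\sum_{[G]:\,|\aut(G)|>1} V!/|\aut(G)| \leq V!\cdot o(U(n,E,3)) = o(L)$, using \eqref{eq:labunlab} once more.

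The third ingredient is connectivity: by Bollob\'as \cite{Bollobas-Connected}, a uniformly random simple $n$-regular graph is connected with probability tending to $1$; equivalently, the number of \emph{labelled} disconnected simple $n$-regular graphs is $o(L)$ (again a pairing-model computation, showing that the expected number of small components is $o(1)$). Combining with the previous paragraph, the number of labelled simple $n$-regular graphs on $\{1,\dots,V\}$ that are both connected and asymmetric is $L - o(L) - o(L) = (1-o(1))L$. On the other hand, a connected asymmetric graph has exactly $V!$ distinct labellings, so that number equals $V!\cdot A(n,E,3)$. Therefore $V!\cdot A(n,E,3) \sim L \sim V!\cdot U(n,E,3)$ by \eqref{eq:labunlab}, and dividing by $V!$ yields $A(n,E,3)\sim U(n,E,3)$.

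The only genuinely analytic content — the Poisson approximation for short cycles in the pairing model (underlying both \eqref{eq:labunlab} and the $e^{-\lambda}$ factor in \thmref{Thm:Bol}) and the connectivity estimate — is classical and due to Bollob\'as; the remaining obstacle is purely the bookkeeping that translates these labelled statements into the unlabelled asymptotic. The one point that keeps this bookkeeping cheap is that the ``automorphism defect'' $\sum_{[G]}(1-|\aut(G)|^{-1})$ is pinned down directly by \eqref{eq:labunlab}, so that symmetric graphs — each contributing at least $\tfrac12$ to this defect — are automatically negligible, with no separate analysis needed for graphs possessing large automorphism groups.
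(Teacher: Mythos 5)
Your argument is correct, and it is worth noting that the paper offers no proof of this statement at all: Theorem~\ref{Thm:Asym} is quoted as a black box, with the connectivity and asymmetry assertions attributed to two separate papers of Bollob\'as. What you have done instead is \emph{derive} the asymmetry part from Theorem~\ref{Thm:Bol} together with the labelled enumeration $L \sim e^{-\lambda}(nV-1)!!/(n!)^V$: since $(2E)!/(2^E E!) = (nV-1)!!$, comparing the two formulas gives $L \sim V!\cdot U(n,E,3)$, and then the orbit-counting identity $L = \sum_{[G]} V!/|\aut(G)|$ pins down the automorphism defect $\sum_{[G]}\bigl(1 - |\aut(G)|^{-1}\bigr) = U - L/V! = o(U)$, so that isomorphism classes with $|\aut(G)|>1$ (each contributing at least $\tfrac12$) are negligible. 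That step is clean and valid, the passage back to labelled counts is handled correctly, and the connectivity input is correctly isolated as a separate citation. The one caveat you should be aware of is a mild circularity in spirit rather than in logic: the standard proof of Theorem~\ref{Thm:Bol} in \cite{Bollobas-Counting} proceeds by first establishing the labelled count and then proving that almost all regular graphs are asymmetric, so the ``extra information'' in the unlabelled asymptotic relative to the labelled one is precisely the asymmetry statement you are extracting. Your deduction is sound as a formal consequence of the quoted theorems, but it does not constitute an independent proof of asymmetry; for that one still needs \cite{Bollobas-Asymmetric} (or an equivalent second-moment/switching argument). Within the framework of this paper, where Theorem~\ref{Thm:Bol} is accepted as given, your reduction is a perfectly legitimate and arguably more economical way to justify Theorem~\ref{Thm:Asym}.
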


As a consequence, we have that a generic $n$-regular graph of girth at least $\girth\geq3$ is
also connected and asymmetric. Indeed,
\begin{align*}
\frac{U(n , E, w)- A(n, E, w)}{U(n, E, w)} 
&\sim_E 
 \exp\left(  \sum_{i=3}^{\girth - 1}  \frac{(n-1)^i}{2i} \right)
 \frac{U(n, E, w)- A(n, E, w)}{U(n, E, 3)}  \\ 
 &\leq
  \exp\left(  \sum_{i=3}^{\girth - 1}  \frac{(n-1)^i}{2i} \right)
  \frac{U(n, E, 3)- A(n, E, 3)}{U(n, E, 3)} \sim_E 0  
\end{align*}
where the first $\sim$ is \thmref{Thm:Wor}, the inequality holds because
graphs of girth at least $\girth$ form a subset of the set of 
graphs of girth at least $3$ and the last $\sim$ is \thmref{Thm:Asym}. 
This shows that
\[
A(n, E, w) \sim_E U(n , E, w)
\]
for every $n \geq 3$ and $w\geq 3$.

Combining this with \eqnref{Eq:Ribbon} and \eqnref{Eq:Asymptotic},
we get (after simplification)
\begin{equation} \label{Eq:Ribbon-Estimate}
S(n, E, \girth) \gtrsim_E 
\exp\left( - \sum_{i=1}^{\girth -1} \frac{(n-1)^i}{2i} \right)
\cdot \frac{(2E)!\, 2^E}{E! \, V! \, (4n)^V} . 
\end{equation} 

\begin{proof}[Proof of \thmref{Thm:Count}] 
By \thmref{thm:local_max} and \corref{cor:girth_bound}, for every finite connected $n$-regular signed graph $\Gamma$ of girth larger than
$K  n (1 + \sqrt{2})^n$, the surface $X(\Gamma)$ is a local maximum of the 
systole function at height $L_n=a(t_n)$ in $\M_g$ where $g=E+1$. Also, by \thmref{thm:auto}, non-isomorphic signed graphs correspond to distinct points in moduli space, and if the signed graph $\Gamma$ is asymmetric then $X(\Gamma)$ has trivial automorphism group too. In other words, the number of asymmetric local maxima of the systole function at height $L_n$  in $\M_g$ is at least $S(n,g-1,\lfloor K  n (1 + \sqrt{2})^n \rfloor + 1)$ .

Thus, we only need to simplify \eqnref{Eq:Ribbon-Estimate} and write it in terms 
of $n$ and $g$. Set
\[
\alpha_n = \exp\left( - \sum_{i=1}^{\girth -1} \frac{(n-1)^i}{2i} \right)
\]
where $w = \lfloor K  n (1 + \sqrt{2})^n \rfloor + 1$. Note that $\alpha_n$ depends only on 
$n$ and not on $g$. We have
\[
\frac{(n-1)^{w-1}}{2w}  \leq \log \frac 1{\alpha_n} \leq w n^w.
\]
Taking the logarithm two more times, we get
\begin{equation} \label{Eq:alpha_n}
\log \log \log \frac{1}{\alpha_n} \sim_n \log w \sim_n  n \log(1+\sqrt 2). 
\end{equation}
We use the Stirling's formula to simplify the remaining terms. The latter implies that there are positive constants 
$\beta_1, \dots, \beta_4$ such that
\[
(2E)!  \gtrsim_g (\beta_1 \, g)^{2g},  \quad
E! \lesssim_g (\beta_2 \, g)^{g},  \quad
V! \lesssim_g (\beta_3 \, g/n)^{2g/n}
\quad\text{and}\quad
(4n)^V \lesssim_g (\beta_4 \, n)^{2g/n}. 
\]
Hence, after collecting the constants, we can estimate the remaining terms in 
\eqnref{Eq:Ribbon-Estimate} as
\[
\frac{(2E)! \, 2^E}{E! \, V! \,(4n)^V}\gtrsim_g
\beta^{\, g} 
\frac{ g^{2g}}{ g^{g} \,(g/n)^{2g/n}\, n^{2g/n}}
\gtrsim_g  \left(\beta \, g\right)^{\left(1-\frac {2}{n} \right)g},
\]
for some constant $\beta>0$ independent of $n$ and $g$. This finishes the proof. 
\end{proof}

\end{document}